\documentclass[12pt]{amsart}
\textwidth=14.5cm \oddsidemargin=1cm \evensidemargin=1cm
\usepackage{amsmath}
\usepackage{amsxtra}
\usepackage{amscd}
\usepackage{amsthm}
\usepackage{amsfonts}
\usepackage{amssymb}
\usepackage{eucal}
\usepackage{scalerel}
\usepackage{verbatim}
\usepackage[matrix,arrow,curve]{xy}

\usepackage[pdftex,bookmarks=false,colorlinks=true,debug=true, naturalnames=true,pdfnewwindow=true]{hyperref}

\newtheorem{cor}[subsection]{Corollary}

\newtheorem{lem}[subsection]{Lemma}
\newtheorem{prop}[subsection]{Proposition}

\newtheorem{conj}[subsection]{Conjecture}
\newtheorem{thm}[subsection]{Theorem}
\newtheorem{defn}[subsection]{Definition}

\theoremstyle{definition}

\theoremstyle{remark}

\newcommand{\nc}{\newcommand}
\nc{\renc}{\renewcommand} \nc{\ssec}{\subsection}
\nc{\sssec}{\subsubsection}

\nc{\on}{\operatorname} \nc{\wh}{\widehat}
\nc\ol{\overline} \nc\ul{\underline} \nc\wt{\widetilde}


\emergencystretch=2cm

\nc{\BA}{{\mathbb{A}}} \nc{\BC}{{\mathbb{C}}}
\nc{\BD}{{\mathbb{D}}} \nc{\BG}{{\mathbb{G}}} \nc{\BQ}{{\mathbb{Q}}}
\nc{\BM}{{\mathbb{M}}} \nc{\BN}{{\mathbb{N}}}
\nc{\BP}{{\mathbb{P}}} \nc{\BR}{{\mathbb{R}}}
\nc{\BZ}{{\mathbb{Z}}} \nc{\BS}{{\mathbb{S}}} \nc{\BW}{{\mathbb{W}}}

\nc{\CA}{{\mathcal{A}}} \nc{\CB}{{\mathcal{B}}} \nc{\CalD}{{\mathcal{D}}}
\nc{\CE}{{\mathcal{E}}} \nc{\CF}{{\mathcal{F}}}
\nc{\CG}{{\mathcal{G}}} \nc{\CH}{{\mathcal{H}}}
\nc{\CI}{{\mathcal{I}}} \nc{\CK}{{\mathcal{K}}} \nc{\CL}{{\mathcal{L}}}
\nc{\CM}{{\mathcal{M}}} \nc{\CN}{{\mathcal{N}}}
\nc{\CO}{{\mathcal{O}}} \nc{\CP}{{\mathcal{P}}}
\nc{\CQ}{{\mathcal{Q}}} \nc{\CR}{{\mathcal{R}}}
\nc{\CS}{{\mathcal{S}}} \nc{\CT}{{\mathcal{T}}}
\nc{\CU}{{\mathcal{U}}} \nc{\CV}{{\mathcal{V}}}  \nc{\CY}{{\mathcal Y}}
\nc{\CW}{{\mathcal{W}}} \nc{\CZ}{{\mathcal{Z}}}

\nc{\cM}{{\check{\mathcal M}}{}} \nc{\csM}{{\check{\mathcal A}}{}}
\nc{\oM}{{\overset{\circ}{\mathcal M}}{}}
\nc{\obM}{{\overset{\circ}{\mathbf M}}{}}
\nc{\oCA}{{\overset{\circ}{\mathcal A}}{}}
\nc{\obA}{{\overset{\circ}{\mathbf A}}{}}
\nc{\ooM}{{\overset{\circ}{M}}{}}
\nc{\osM}{{\overset{\circ}{\mathsf M}}{}}
\nc{\vM}{{\overset{\bullet}{\mathcal M}}{}}
\nc{\nM}{{\underset{\bullet}{\mathcal M}}{}}
\nc{\oD}{{\overset{\circ}{\mathcal D}}{}}
\nc{\obD}{{\overset{\circ}{\mathbf D}}{}}
\nc{\oA}{{\overset{\circ}{\mathbb A}}{}}
\nc{\op}{{\overset{\bullet}{\mathbf p}}{}}
\nc{\cp}{{\overset{\circ}{\mathbf p}}{}}
\nc{\oU}{{\overset{\bullet}{\mathcal U}}{}}
\nc{\ofZ}{{\overset{\circ}{\mathfrak Z}}{}}

\nc{\ff}{{\mathfrak{f}}} \nc{\fv}{{\mathfrak{v}}}
\nc{\fa}{{\mathfrak{a}}} \nc{\fb}{{\mathfrak{b}}}
\nc{\fd}{{\mathfrak{d}}} \nc{\fe}{{\mathfrak{e}}}
\nc{\fg}{{\mathfrak{g}}} \nc{\fgl}{{\mathfrak{gl}}}
\nc{\fh}{{\mathfrak{h}}} \nc{\fri}{{\mathfrak{i}}}
\nc{\fj}{{\mathfrak{j}}} \nc{\fk}{{\mathfrak{k}}} \nc{\fl}{{\mathfrak{l}}}
\nc{\fm}{{\mathfrak{m}}} \nc{\fn}{{\mathfrak{n}}}
\nc{\ft}{{\mathfrak{t}}} \nc{\fu}{{\mathfrak{u}}}
\nc{\fw}{{\mathfrak{w}}} \nc{\fz}{{\mathfrak{z}}}
\nc{\fp}{{\mathfrak{p}}} \nc{\frr}{{\mathfrak{r}}}
\nc{\fs}{{\mathfrak{s}}} \nc{\fsl}{{\mathfrak{sl}}}
\nc{\hsl}{{\widehat{\mathfrak{sl}}}}
\nc{\hgl}{{\widehat{\mathfrak{gl}}}}
\nc{\hg}{{\widehat{\mathfrak{g}}}}
\nc{\chg}{{\widehat{\mathfrak{g}}}{}^\vee}
\nc{\hn}{{\widehat{\mathfrak{n}}}}
\nc{\chn}{{\widehat{\mathfrak{n}}}{}^\vee}

\nc{\fA}{{\mathfrak{A}}} \nc{\fB}{{\mathfrak{B}}} \nc{\fC}{{\mathfrak{C}}}
\nc{\fD}{{\mathfrak{D}}} \nc{\fE}{{\mathfrak{E}}}
\nc{\fF}{{\mathfrak{F}}} \nc{\fG}{{\mathfrak{G}}} \nc{\fH}{{\mathfrak{H}}}
\nc{\fI}{{\mathfrak{I}}} \nc{\fJ}{{\mathfrak{J}}}
\nc{\fK}{{\mathfrak{K}}} \nc{\fL}{{\mathfrak{L}}}
\nc{\fM}{{\mathfrak{M}}} \nc{\fN}{{\mathfrak{N}}}
\nc{\frP}{{\mathfrak{P}}} \nc{\fQ}{{\mathfrak{Q}}}
\nc{\fT}{{\mathfrak{T}}} \nc{\fU}{{\mathfrak{U}}}
\nc{\fV}{{\mathfrak{V}}} \nc{\fW}{{\mathfrak{W}}}
\nc{\fX}{{\mathfrak{X}}} \nc{\fY}{{\mathfrak{Y}}}
\nc{\fZ}{{\mathfrak{Z}}}

\nc{\ba}{{\mathbf{a}}}
\nc{\bb}{{\mathbf{b}}} \nc{\bc}{{\mathbf{c}}}
\nc{\be}{{\mathbf{e}}} \nc{\bj}{{\mathbf{j}}}
\nc{\bn}{{\mathbf{n}}} \nc{\bp}{{\mathbf{p}}}
\nc{\bq}{{\mathbf{q}}} \nc{\br}{{\mathbf{r}}} \nc{\bt}{{\mathbf{t}}}
\nc{\bfu}{{\mathbf{u}}} \nc{\bv}{{\mathbf{v}}}
\nc{\bx}{{\mathbf{x}}} \nc{\by}{{\mathbf{y}}}
\nc{\bw}{{\mathbf{w}}} \nc{\bA}{{\mathbf{A}}}
\nc{\bB}{{\mathbf{B}}} \nc{\bC}{{\mathbf{C}}}
\nc{\bD}{{\mathbf{D}}} \nc{\bF}{{\mathbf{F}}}
\nc{\bH}{{\mathbf{H}}} \nc{\bJ}{{\mathbf{J}}} \nc{\bK}{{\mathbf{K}}}
\nc{\bM}{{\mathbf{M}}} \nc{\bN}{{\mathbf{N}}}
\nc{\bO}{{\mathbf{O}}} \nc{\bS}{{\mathbf{S}}} \nc{\bT}{{\mathbf{T}}}
\nc{\bV}{{\mathbf{V}}} \nc{\bW}{{\mathbf{W}}}
\nc{\bX}{{\mathbf{X}}}
\nc{\bY}{{\mathbf{Y}}} \nc{\bP}{{\mathbf{P}}}
\nc{\bZ}{{\mathbf{Z}}} \nc{\bh}{{\mathbf{h}}}

\nc{\sA}{{\mathsf{A}}} \nc{\sB}{{\mathsf{B}}}
\nc{\sC}{{\mathsf{C}}} \nc{\sD}{{\mathsf{D}}}
\nc{\sE}{{\mathsf{E}}} \nc{\sF}{{\mathsf{F}}} \nc{\sG}{{\mathsf{G}}}
\nc{\sI}{{\mathsf{I}}} \nc{\sK}{{\mathsf{K}}} \nc{\sL}{{\mathsf{L}}}
\nc{\sfm}{{\mathsf{m}}} \nc{\sM}{{\mathsf{M}}} \nc{\sO}{{\mathsf{O}}}
\nc{\sQ}{{\mathsf{Q}}} \nc{\sP}{{\mathsf{P}}}
\nc{\sT}{{\mathsf{T}}} \nc{\sZ}{{\mathsf{Z}}}
\nc{\sV}{{\mathsf{V}}} \nc{\sW}{{\mathsf{W}}}
\nc{\sfp}{{\mathsf{p}}} \nc{\sr}{{\mathsf{r}}}
\nc{\st}{{\mathsf{t}}} \nc{\sfb}{{\mathsf{b}}}
\nc{\sfc}{{\mathsf{c}}} \nc{\sd}{{\mathsf{d}}}
\nc{\sz}{{\mathsf{z}}}

\nc{\tA}{{\widetilde{\mathbf{A}}}}
\nc{\tB}{{\widetilde{\mathcal{B}}}}
\nc{\tg}{{\widetilde{\mathfrak{g}}}} \nc{\tG}{{\widetilde{G}}}
\nc{\TM}{{\widetilde{\mathbb{M}}}{}}
\nc{\tO}{{\widetilde{\mathsf{O}}}{}}
\nc{\tU}{{\widetilde{\mathfrak{U}}}{}} \nc{\TZ}{{\tilde{Z}}}
\nc{\tx}{{\tilde{x}}} \nc{\tbv}{{\tilde{\bv}}}
\nc{\tfP}{{\widetilde{\mathfrak{P}}}{}} \nc{\tz}{{\tilde{\zeta}}}
\nc{\tmu}{{\tilde{\mu}}}

\nc{\urho}{\underline{\rho}} \nc{\uB}{\underline{B}}
\nc{\uC}{{\underline{\mathbb{C}}}} \nc{\ui}{\underline{i}}
\nc{\uj}{\underline{j}} \nc{\ofP}{{\overline{\mathfrak{P}}}}
\nc{\oB}{{\overline{\mathcal{B}}}}
\nc{\og}{{\overline{\mathfrak{g}}}} \nc{\oI}{{\overline{I}}}

\nc{\eps}{\varepsilon} \nc{\hrho}{{\hat{\rho}}}
\nc{\blambda}{{\bar\lambda}} \nc{\bmu}{{\bar\mu}} \nc{\bnu}{{\bar\nu}}

\nc{\one}{{\mathbf{1}}} \nc{\two}{{\mathbf{t}}}

\nc{\Sym}{{\mathop{\operatorname{\rm Sym}}}}
\nc{\Tot}{{\mathop{\operatorname{\rm Tot}}}}
\nc{\Spec}{{\mathop{\operatorname{\rm Spec}}}}
\nc{\Ker}{{\mathop{\operatorname{\rm Ker}}}}
\nc{\Hilb}{{\mathop{\operatorname{\rm Hilb}}}}
\nc{\End}{{\mathop{\operatorname{\rm End}}}}
\nc{\Ext}{{\mathop{\operatorname{\rm Ext}}}}
\nc{\Hom}{{\mathop{\operatorname{\rm Hom}}}}
\nc{\CHom}{{\mathop{\operatorname{{\mathcal{H}}\it om}}}}
\nc{\GL}{{\mathop{\operatorname{\rm GL}}}}
\nc{\gr}{{\mathop{\operatorname{\rm gr}}}}
\nc{\Id}{{\mathop{\operatorname{\rm Id}}}}
\nc{\defi}{{\mathop{\operatorname{\rm def}}}}
\nc{\length}{{\mathop{\operatorname{\rm length}}}}
\nc{\supp}{{\mathop{\operatorname{\rm supp}}}}
\nc{\HC}{{\mathcal H}{\mathcal C}}

\nc{\Cliff}{{\mathsf{Cliff}}}
\nc{\Fl}{{\mathcal{F}\ell}} \nc{\Fib}{{\mathsf{Fib}}}
\nc{\Coh}{{\mathsf{Coh}}} \nc{\FCoh}{{\mathsf{FCoh}}}

\nc{\reg}{{\text{\rm reg}}}
\nc{\gvee}{{\mathfrak g}^{\!\scriptscriptstyle\vee}}
\nc{\tvee}{{\mathfrak t}^{\!\scriptscriptstyle\vee}}
\nc{\nvee}{{\mathfrak n}^{\!\scriptscriptstyle\vee}}
\nc{\bvee}{{\mathfrak b}^{\!\scriptscriptstyle\vee}}
       \nc{\rhovee}{\rho^{\!\scriptscriptstyle\vee}}

\nc{\cplus}{{\mathbf{C}_+}} \nc{\cminus}{{\mathbf{C}_-}}
\nc{\cthree}{{\mathbf{C}_*}} \nc{\Qbar}{{\bar{Q}}}

\newcommand{\oZ}{\vphantom{j^{X^2}}\smash{\overset{\circ}{\vphantom{\rule{0pt}{0.55em}}\smash{Z}}}}
\newcommand\iso{\,\vphantom{j^{X^2}}\smash{\overset{\sim}{\vphantom{\rule{0pt}{0.20em}}\smash{\longrightarrow}}}\,}
\nc{\Gtimes}{\vphantom{j^{X^2}}\smash{\overset{G}{\vphantom{\rule{0pt}{0.30em}}\smash{\times}}}}
\nc{\sGtimes}{\vphantom{j^{X^2}}\smash{\overset{\mathsf G}{\vphantom{\rule{0pt}{0.30em}}\smash{\times}}}}

\nc{\bOmega}{{\overline{\Omega}}}

\nc{\seq}[1]{\stackrel{#1}{\sim}}

\nc{\aff}{{\operatorname{aff}}}
\nc{\fin}{{\operatorname{fin}}}
\nc{\Gr}{{\operatorname{Gr}}}
\nc{\GR}{{\mathbf{Gr}}}
\nc{\Perv}{{\operatorname{Perv}}}
\nc{\Rep}{{\operatorname{Rep}}}
\nc{\IC}{{\operatorname{IC}}}
\nc{\Bun}{{\operatorname{Bun}}}
\nc{\Proj}{{\operatorname{Proj}}}
\nc{\pt}{{\operatorname{pt}}}
\nc{\bfmu}{{\boldsymbol{\mu}}}
\nc{\bfomega}{{\boldsymbol{\omega}}}
\nc{\calM}{\mathcal M}
\nc{\calA}{\mathcal A}
\nc{\calO}{\mathcal O}
\nc{\CC}{\mathbb C}
\nc{\calN}{\mathcal N}
\nc{\grg}{\mathfrak g}
\nc{\tslash}{/\!\!/\!\!/}
\nc\grt{\mathfrak t}
\nc\bfM{\mathbf M}
\nc\bfN{\mathbf N}
\nc\Sig{\Sigma}
\nc\ZZ{\mathbb{Z}}
\nc\calC{\mathcal C}
\nc\calF{\mathcal F}
\nc\calX{\mathcal X}
\nc\calY{\mathcal Y}
\nc\QCoh{\operatorname{QCoh}}
\nc\IndCoh{\operatorname{IndCoh}}
\nc\Maps{\operatorname{Maps}}
\nc\Dmod{D-\operatorname{mod}}
\newcommand\Hecke{\operatorname{Hecke}}
\nc{\calD}{\mathcal D}
\nc\bfO{\mathbf O}
\renewcommand{\AA}{\mathbb A}
\nc\GG{\mathbb G}
\nc\calK{\mathcal K}
\nc{\calG}{\mathcal G}
\nc\RHom{\operatorname{RHom}}
\nc\grs{\mathfrak s}
\nc{\tilX}{\widetilde X}
\nc\calB{\mathcal B}
\nc\calS{\mathcal S}
\nc\calT{\mathcal T}
\nc\calZ{\mathcal Z}
\nc\LS{\operatorname{LocSys}}
\nc\bfL{\on{\mathbf L}}
\nc\Sing{\on{Sing}}
\nc\x{\times}
\nc\calW{\mathcal W}

%
%
%
%
%

\begin{document}
\author[A.~Braverman]{Alexander Braverman}
\address{
Department of Mathematics,
University of Toronto and Perimeter Institute of Theoretical Physics,
Waterloo, Ontario, Canada, N2L 2Y5
\newline
Skolkovo Institute of Science and Technology;
}
\email{braval@math.toronto.edu}
\author[M.~Finkelberg]{Michael Finkelberg}
\address{
National Research University Higher School of Economics, Russian Federation\newline
Department of Mathematics, 6 Usacheva st., Moscow 119048;\newline
Skolkovo Institute of Science and Technology;\newline
Institute for Information Transmission Problems}
\email{fnklberg@gmail.com}

\title
[Coulomb branches of 3-dimensional gauge theories]
{Coulomb branches of 3-dimensional gauge theories and related structures}





\begin{abstract}
These are (somewhat informal) lecture notes for the CIME summer school "Geometric Representation Theory and Gauge Theory​"
in June 2018.
In these notes we review the constructions and results of \cite{bfn2,bfn3,bfn4} where a
mathematical definition of Coulomb branches of 3d N=4 quantum gauge theories (of cotangent
type) is given, and also present a framework for studying some further mathematical structures
(e.g.\ categories of line operators in the corresponding topologically twisted theories)
related to these theories.
\end{abstract}
\maketitle

\section{Introduction and first motivation: Symplectic duality and a little bit of physics}

\subsection{Symplectic singularities}
Let $X$ be an algebraic variety over $\CC$. We say that $X$ is singular symplectic (or $X$ has symplectic singularities) if

(1) $X$ is a normal Poisson variety;

(2) There exists a smooth dense open subset $U$ of $X$ on which the Poisson structure comes from a symplectic structure. We shall denote by $\omega$ the corresponding symplectic form.

(3) There exists a resolution of singularities $\pi:\tilX\to X$ such that $\pi^*\omega$ has no poles on $\tilX$.

\noindent
This definition is due to A.~Beauville, who showed that if condition (3) above holds for some $\tilX$ then it holds for any resolution of $X$
\subsection{Conical symplectic singularities}
We say that $X$ is a conical symplectic singularity if in addition to (1)-(3) above the following conditions are satisfied:

(4) $X$ is affine;

(5) There exists a $\CC^{\times}$-action on $X$ which contracts it to a point $x_0\in X$ and such that the form $\omega$ has positive weight.

\noindent
We shall consider examples a little later.

\subsection{Symplectic resolutions}
By a symplectic resolutions we mean a morphism $\pi:\tilX\to X$ such that

(a) $X$ satisfies (1)-(5) above;

(b) $\tilX$ is smooth and $\pi$ is proper and birational and the action of $\CC^{\times}$ on $X$ extends to an action on
$\tilX$.

(c) $\pi^*\omega$ extends to a symplectic form on $\tilX$.

\noindent
{\bf Example.} Let $\grg$ be a semi-simple Lie algebra over $\CC$ and let $\calN_{\grg}\subset \grg^*$ be its nilpotent cone. Let $\calB$ denote the flag variety of $\grg$. Then the Springer map $\pi:T^*\calB\to \calN_{\grg}$ is proper and birational, so if we let $X=\calN_{\grg}, \tilX=T^*\calB$ we get a symplectic resolution.

\subsection{The spaces $\grt_X$ and $\grs_X$}
To any conical symplectic singularity $X$ one can associate two canonical vector spaces which we shall denote by $\grt_X$ and $\grs_X$. The space $\grs_X$ is just the Cartan subalgebra of the group of Hamiltonian automorphisms of $X$ commuting with the contracting $\BC^\times$-action
(which is an finite-dimensional algebraic group over $\CC$). The space $\grt_X$ is trickier to define. First, assume that $X$ has a symplectic resolution $\tilX$. Then
$\grt_X=H^2(\tilX,\CC)$ (it follows from the results of Namikawa that $\grt_X$ is independent of the choice of $\tilX$).
Moreover, $\grt_X$ also has another interpretation: there is a deformation $\calX$ of $X$ as a singular symplectic variety over the base $\grt_X$. The map $\calX\to \grt_X$ is smooth away from a finite union of hyperplanes in $\grt_X$.

If $X$ doesn't have have a symplectic resolution, Namikawa still defines the space $\grt_X$ and the above deformation; the only difference is
that in this case $\calX$ is no longer smooth over the generic point of $\grt_X$ (informally, one can say that $\calX$ is the deformation which makes $X$ ``as smooth as possible" while staying in the class of symplectic varieties).

\subsection{Some examples}
Let $X$ be $\calN_{\grg}$ as in the example above. Then $\grs_X$ is the Cartan subalgebra of $\grg$ and $\grt_X$ is its dual space.
One may think that $\grt_X$ and $\grs_X$ always have the same dimension. However, it is not true already in the case $X=\CC^{2n}$.
In this case $\grs_X$ has dimension $n$ and $\grt_X=0$.

Let now $X$ be a Kleinian surface singularity of type $A,D$ or $E$. In other words $X$ is isomorphic to $\CC^2/\Gamma$ where $\Gamma$ is a finite subgroup of $SL(2,\CC)$. Thus $X$ has a unique singular point and it is known that $X$ has a symplectic resolution $\tilX$ with exceptional divisor formed by a tree of ${\mathbb P}^1$'s whose intersection matrix is the above Cartan matrix of type $A,D$ or $E$. Thus the dimension of $\grt_X$ is the rank of this Cartan matrix. On the other hand, it is easy to show that $\grs_X$ is 1-dimensional if $\Gamma$ is of type $A$ and is equal to $0$ otherwise.

\subsection{The idea of symplectic duality}
The idea of symplectic duality is this:\footnote{The main ideas are due to T.~Braden, A.~Licata, N.~Proudfoot and B.~Webster.} often conical symplectic singularities come in ``dual" pairs $(X,X^*)$ (the assignment $X\to X^*$ is by no means a functor; we just have a lot of interesting examples of dual pairs). What does it mean that $X$ and $X^*$ are dual?
There is no formal definition; however, there are a lot of interesting properties that a dual pair must satisfy. The most straightforward one is this: we should have
$$
\grt_X=\grs_{X^*}\quad \grs_X=\grt_{X^*}.
$$
Other properties of dual pairs are more difficult to describe. For example, if both $X$ and $X^*$ have symplectic resolutions $\tilX$ and
$\tilX^*$ then one should
have
$$
\dim H^*(\tilX,\CC)=\dim H^*(\tilX^*,\CC).
$$
(However, these spaces are not supposed to be canonically isomorphic).
We refer the reader to \cite{BPW}, \cite{BLPW} for more details.

One of the purposes of these notes will be to provide a construction of a large class of symlectically dual pairs. Before we discuss what this class is, let us talk about some examples.

\subsection{Examples of symplectically dual spaces}
\subsubsection{Nilpotent cones}Let
$X=\calN_{\grg}$ and let $X^*=\calN_{\grg^{\vee}}$ where $\grg^{\vee}$ is the Langlands dual Lie algebra. This is supposed to be a symplectically dual pair.

  \subsubsection{Slodowy slices in type $A$}
  For partitions $\lambda\geq\mu$ of $n$, let $\CS^\lambda_\mu$
  be the intersection of the nilpotent orbit closure
  $\ol{\mathbb O}_\lambda\subset\fgl(n)$ with the Slodowy slice to
  the orbit ${\mathbb O}_\mu$. Then $\CS^\lambda_\mu$ is dual to
  $\CS^{\mu^t}_{\lambda^t}$.

  \subsubsection{Toric hyperk\"ahler manifolds}
Consider an exact sequence
\begin{equation*}
0\to\BZ^{d-n}\xrightarrow{\alpha}\BZ^d\xrightarrow{\beta}\BZ^n\to0
\end{equation*}
of the free based $\BZ$-modules. It gives rise to a toric hyperk\"ahler manifold
$X$~\cite{bida}. Then $X^*$ is the toric hyperk\"ahler manifold associated to the
dual exact sequence (Gale duality).

  \subsubsection{Uhlenbeck spaces} $\on{Sym}^a(\BA^2/\Gamma)^\vee\simeq\CU_G^a(\BA^2)/\BG_a^2$
  for a finite subgroup $\Gamma\subset SL(2)$ corresponding by McKay
  to an almost simple simply connected simply laced Lie group $G$. Here $\BG_a^2$ acts on $\BA^2$ by
  translations, and hence it acts on $\CU_G^a(\BA^2)$
  by the transport of structure.

\subsection{3d N=4 quantum field theories and symplectic duality}
One source of dual pairs $(X,X^*)$ comes from quantum field theory. We discuss this in more detail in Section \ref{naive}; here we are just going to mention briefly the relevant notions.

Physicists have a notion of 3-dimensional N=4 super-symmetric quantum field theory. Any such theory $\calT$ is supposed to have a well-defined moduli space of vacua $\calM(\calT)$. This space is somewhat too complicated for our present discussion. Instead we are going to discuss some ``easy" parts of this space. Namely, the above moduli space of vacua should have two special pieces called the Higgs and the Coulomb branch of the moduli space of vacua; we shall denote these by $\calM_H(\calT)$ and $\calM_C(\calT)$. They are supposed to be Poisson (generically symplectic) complex algebraic varieties (in fact, the don't even have to be algebraic but for simplicity we shall only consider examples when they are).
They should also be hyper-k\"ahler in some sense, but (to the best of our knowledge) this notion is not well-defined for singular varieties, we are going to ignore the hyper-k\"ahler structure in these notes. But at least they are expected to be singular symplectic.

There is no mathematical classification of 3d N=4 theories. However, here is a class of examples. Let $G$ be a complex reductive algebraic group and let $\bfM$ be a symplectic representation of $G$; moreover we shall assume that the action of $G$ is Hamiltonian, i.e.\ that we have a well-defined moment map $\mu:\bfM\to \grg^*$ (this map can be fixed uniquely by requiring that $\mu(0)=0$). Then to the pair $(G,\bfM)$ one is supposed to associate
a theory $\calT(G,\bfM)$. This theory is called {\em gauge theory with gauge group $G$ and matter $\bfM$}. Its Higgs branch is expected to be equal to $\bfM\tslash G$ -- the Hamiltonian reduction of $\bfM$ with respect to $G$.
In particular, all Nakajima quiver varieties arise in this way (the corresponding theories are called {\em quiver gauge theories}).

What about the Coulomb branch of gauge theories? These are more tricky to define. Physicists have some expectations about those but no rigorous definition in general.
For example, $\calM_C(G,\bfM)$ is supposed to be birationally isomorphic to $(T^*T^{\vee})/W$. Here $T^{\vee}$ is the torus to dual the Cartan torus of $G$ and $W$ is the Weyl group. The above birational isomorphism should also preserve the Poisson structure.\footnote{$(T^*T^{\vee})/W$ is actually the Coulomb branch of the corresponding classical field theory and the fact that the above birational isomorphism is not in general biregular means that ``in the quantum theory the Coulomb branch acquires quantum corrections".}
In addition $\calM_C(G,\bfM)$ has a canonical $\CC^{\times}$-action with respect to which the symplectic for has weight 2.
Unfortunately,  it is not always conical but very often it is. Roughly speaking, to guarantee that $\calM_C(G,\bfM)$ is conical one needs that the representation $\bfM$ be ``big enough"
(for reasons not to be discussed here physicists call the corresponding gauge theories ``good or ugly").
In the conical case physicists (cf.\ \cite{chz}) produce a formula for the graded character of the algebra of functions on $\calM_C(G,\bfM)$.
This formula is called ``the monopole formula" (in a special case relevant for the purposes of these notes it is recalled in Subsection \ref{monopole}).

The idea is that at least in the conical case the pair $(\calM_H(\calT),\calM_C(\calT))$ should produce an example of a dual symplectic pair.
One of the purposes of these notes (but not the only purpose) is to review the contents of the papers
\cite{bfn2,bfn3,bfn4} (joint with H.~Nakajima) where a mathematical definition of the Coulomb branches $\calM_C(G,\bfM)$ ia given under an additional assumption (namely, we assume that $\bfM=T^*\bfN=\bfN\oplus\bfN^*$ for some representation $\bfN$ of $G$
-- such theories are called {\em gauge theories of cotangent type}) and some further properties of Coulomb brancnes are studied.\footnote{The reader is also advised to consult the papers \cite{na3,na4,na5} by H.~Nakajima. In particular, the papers \cite{bfn2,bfn3,bfn4} are based on the ideas developed earlier in \cite{na3}. Also \cite{na4} contains a lot of interesting open problems in the subject most of which will not be addressed in these notes.}
In this case we shall write $\calM_C(G,\bfN)$ instead of $\calM_C(G,\bfM)$. In {\em loc. cit.} it is defined as $\Spec(\CC[\calM_C(G,\bfN)])$ where $\CC[\calM_C(G,\bfN)]$ is some geometrically defined algebra over $\CC$. The varieties $\calM_C(G,\bfN)$ are normal, affine, Poisson and generically symplectic. We expect that the they are actually singular symplectic, but we can't prove this in general. The main ingredient in the definition is the geometry of the affine Grassmannian $\Gr_G$ of $G$.

\subsection{Remark about categorical symplectic duality} The following will never be used in the sequel, but we think it is important to mention it for the interested reader. Perhaps the most interesting aspect of symplectic duality is the {\em categorical symplectic duality} discussed in \cite{BLPW}.
Namely, in {\em loc. cit.} the authors explain that if both $X$ and $X^*$ have a symplectic resolution, then one can think about symplectic duality between them as Koszul duality between some version of category $\calO$ over the quantization of the algebras $\CC[X]$ and $\CC[X^*]$. In fact, it is explained in \cite{w} that a slightly weaker version of this statement can be formulated even when $X$ and $X^*$ do not have a symplectic resolution. This weaker statement is in fact proved in \cite{w} for $\calM_H(G,\bfN)$ and $\calM_C(G,\bfN)$ (where the author uses the definition of $\calM_C(G,\bfN)$ from \cite{bfn2}).

\subsection{The plan}These notes are organized as follows. First, as was mentioned above the main geometric player in our construction of $\calM_C(G,\bfN)$ is the affine Grassmannian $\Gr_G$ of $G$. In Sections \ref{satake} and \ref{derived-satake} we review some facts and constructions related to $\Gr_G$. Namely, in Section \ref{satake} we review the so called {\em geometric Satake equivalence}; in Section \ref{derived-satake} we discuss an upgrade this construction: the {\em derived
  geometric Satake equivalence}.
In Section \ref{naive} we discuss some general expectations about 3d N=4 theories and in Section \ref{Coulomb} we give a definition of the varieties $\calM_C(G,\bfN)$. Section \ref{quiver} is devoted to the example of quiver gauge theories; in particular,  for finite quivers of $ADE$ type we identify
the Coulomb branches with certain (generalized) slices in the affine Grassmannian of the corresponding group of $ADE$-type.
Finally, in Section \ref{categorical} we discuss some conjectural categorical structures related to the {\em topologically twisted version}
of gauge theories of cotangent type (we have learned the main ideas of this Section from T.~Dimofte, D.~Gaiotto, J.~Hilburn and P.~Yoo).

\subsection{Acknowledgements}
We are greatly indebted to our coauthor H.~Nakajima who taught us everything we know about Coulomb branches of 3d N=4 gauge theories and to the organizers of the CIME summer school ``Geometric Representation Theory and Gauge Theory​" in June 2018,  for which these notes were written.
In addition we would like to thank T.~Dimofte, D.~Gaiotto, J.~Hilburn and P.~Yoo for their patient explanations of various things (in particular, as was mentioned above the main idea of Section \ref{categorical} is due to them). Also, we are very grateful to R.~Bezrukavnikov, K.~Costello, D.~Gaitsgory and S.~Raskin for their help with many questions  which arose during the preparation of these notes.
M.F.\ was partially supported by the Russian Academic Excellence Project `5-100'.

\section{Geometric Satake}\label{satake}

\subsection{Overview}
\label{overview}
Let $\CO$ denote the formal power series ring $\BC[[z]]$, and let $\CK$ denote
its fraction field $\BC((z))$. Let $G$ be a reductive complex algebraic
group with a Borel and a Cartan subgroup $G\supset B\supset T$, and with
the Weyl group $W$ of $(G,T)$. Let $\Lambda$
be the coweight lattice, and let $\Lambda^+\subset\Lambda$ be the submonoid of
dominant coweights. Let also $\Lambda_+\subset\Lambda$ be the submonoid spanned
by the simple coroots $\alpha_i,\ i\in I$.
We denote by $G^\vee\supset T^\vee$ the Langlands dual
group, so that $\Lambda$ is the weight lattice of $G^\vee$.

The affine Grassmannian $\Gr_G=G_\CK/G_\CO$ is an ind-projective scheme,
the union $\bigsqcup_{\lambda\in\Lambda^+}\Gr_G^\lambda$ of $G_\CO$-orbits.
The closure of $\Gr_G^\lambda$ is a projective variety
$\ol\Gr{}^\lambda_G=\bigsqcup_{\mu\leq\lambda}\Gr^\mu_G$. The fixed point set
$\Gr^T_G$ is naturally identified with the coweight lattice $\Lambda$;
and $\mu\in\Lambda$ lies in $\Gr_G^\lambda$ iff $\mu\in W\lambda$.

One of the cornerstones of the Geometric Langlands Program initiated by
V.~Drinfeld is an equivalence $\BS$ of the tensor category $\Rep(G^\vee)$ and
the category $\Perv_{G_\CO}(\Gr_G)$ of $G_\CO$-equivariant perverse constructible
sheaves on $\Gr_G$ equipped with a natural monoidal convolution structure
$\star$ and a fiber functor $H^\bullet(\Gr_G,-)$~\cite{lu,gi,bd,mv}. It is a
categorification of the classical Satake isomorphism between
$K(\Rep(G^\vee))=\BC[T^\vee]^{W}$ and the spherical affine Hecke algebra of
$G$. The geometric Satake equivalence $\BS$ sends an irreducible $G^\vee$-module
$V^\lambda$ with highest weight $\lambda$ to the Goresky-MacPherson sheaf
$\IC(\ol\Gr{}^\lambda_G)$.

In order to construct a commutativity constraint for
$(\Perv_{G_\CO}(\Gr_G),\star)$, Beilinson and Drinfeld introduced a relative
version $\Gr_{G,BD}$ of the Grassmannian over the Ran space of a smooth curve
$X$, and a fusion monoidal structure $\Psi$ on $\Perv_{G_\CO}(\Gr_G)$ (isomorphic
to $\star$). One of the main discoveries of~\cite{mv} was a $\Lambda$-grading
of the fiber functor
$H^\bullet(\Gr_G,\CF)=\bigoplus_{\lambda\in\Lambda}\Phi_\lambda(\CF)$ by the
hyperbolic stalks at $T$-fixed points. For a $G^\vee$-module $V$, its weight
space $V_\lambda$ is canonically isomorphic to the hyperbolic stalk
$\Phi_\lambda(\BS V)$.

Various geometric structures of a perverse sheaf $\BS V$ reflect some fine
representation theoretic structures of $V$, such as Brylinski-Kostant filtration
and the action of dynamical Weyl group, see~\cite{gr}. One of the important
technical tools of studying $\Perv_{G_\CO}(\Gr_G)$ is the embedding
$\Gr_G\hookrightarrow\GR_G$ into Kashiwara infinite type scheme
$\GR_G=G_{\BC((z^{-1}))}/G_{\BC[z]}$~\cite{ka1,kt}. The quotient
$G_{\BC[[z^{-1}]]}\backslash\GR_G$ is the moduli stack $\Bun_G(\BP^1)$ of
$G$-bundles on the projective line $\BP^1$.
The $G_{\BC[[z^{-1}]]}$-orbits on $\GR_G$ are of finite codimension; they are also
numbered by the dominant coweights of $G$, and the image of an orbit
$\GR_G^\lambda$ in $\Bun_G(\BP^1)$ consists of $G$-bundles of isomorphism type
$\lambda$~\cite{gro}. The stratifications
$\Gr_G=\bigsqcup_{\lambda\in\Lambda^+}\Gr_G^\lambda$ and
$\GR_G=\bigsqcup_{\lambda\in\Lambda^+}\GR_G^\lambda$ are transversal, and their
intersections and various generalizations thereof will play an important role later on.

More precisely, we denote by $K_1$ the first congruence subgroup of
$G_{\BC[[z^{-1}]]}$: the kernel of the evaluation projection
$\on{ev}_\infty\colon G_{\BC[[z^{-1}]]}\twoheadrightarrow G$. The transversal
slice $\CW_\mu^\lambda$ (resp.\ $\ol\CW{}_\mu^\lambda$) is defined as the
intersection of $\Gr_G^\lambda$ (resp.\ $\ol\Gr{}_G^\lambda$) and
$K_1\cdot\mu$ in $\GR_G$. It is known that $\ol\CW{}_\mu^\lambda$ is nonempty
iff $\mu\leq\lambda$, and $\dim\ol\CW{}_\mu^\lambda$ is an affine irreducible
variety of dimension $\langle2\rho^{\!\scriptscriptstyle\vee},\lambda-\mu\rangle$.
Following an idea of I.~Mirkovi\'c,~\cite{kwy} proved that
$\ol\CW{}_\mu^\lambda=\bigsqcup_{\mu\leq\nu\leq\lambda}\CW_\mu^{\nu}$
is the decomposition of $\ol\CW{}_\mu^\lambda$ into symplectic leaves of a
natural Poisson structure.

\subsection{Hyperbolic stalks}
\label{hyperbolic}
Let $N$ denote the unipotent radical of the Borel $B$, and let $N_-$ stand for the
unipotent radical of the opposite Borel $B_-$. For a coweight $\nu\in\Lambda=\Gr_G^T$,
we denote by $S_\nu\subset\Gr_G$ (resp.\ $T_\nu\subset\Gr_G$) the orbit of $N(\CK)$
(resp.\ of $N_-(\CK)$. The intersections $S_\nu\cap\ol\Gr{}^\lambda_G$
(resp.\ $T_\nu\cap\ol\Gr{}^\lambda_G$) are the
{\em attractors} (resp.\ {\em repellents}) of $\BC^\times$ acting via its homomorphism
$2\rho$ to the Cartan torus $T\curvearrowright\ol\Gr{}^\lambda_G\colon
S_\nu\cap\ol\Gr{}^\lambda_G=\{x\in\ol\Gr{}^\lambda_G :
\lim\limits_{c\to0}2\rho(c)\cdot x=\nu\}$ and
$T_\nu\cap\ol\Gr{}^\lambda_G=\{x\in\ol\Gr{}^\lambda_G :
\lim\limits_{c\to\infty}2\rho(c)\cdot x=\nu\}$. Going to the limit
$\Gr_G=\lim\limits_{\lambda\in\Lambda^+}\ol\Gr{}^\lambda_G,\ S_\nu$ (resp.\ $T_\nu$)
is the attractor (resp.\ repellent) of $\nu$ in $\Gr_G$.
We denote by $r_{\nu,+}$ (resp.\ $r_{\nu,-}$) the locally closed embedding
$S_\nu\hookrightarrow\Gr_G$ (resp.\ $T_\nu\hookrightarrow\Gr_G$).
We also denote by $\iota_{\nu,+}$ (resp.\ $\iota_{\nu,-}$) the closed embedding
of the point $\nu$ into $S_\nu$ (resp.\ into $T_\nu$).
The following theorem is proved in~\cite{br,dg}.

\begin{thm}
There is a canonical isomorphism of functors
$\iota_{\nu,+}^*r_{\nu,+}^!\simeq\iota_{\nu,-}^!r_{\nu,-}^*\colon
D^b_{G_\CO}(\Gr_G)\to D^b(\on{Vect})$.
\end{thm}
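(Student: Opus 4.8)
The statement is the hyperbolic localization theorem for the $\BC^\times$-action on $\Gr_G$ defined by the regular coweight $2\rho$, and the plan is to deduce it from the theorem of Braden~\cite{br}, together with the contraction principle, in the form developed in~\cite{dg}. First I would pass to finite type: any $\CF\in D^b_{G_\CO}(\Gr_G)$ is supported on some $\ol\Gr{}^\lambda_G$, which is projective, and the cocharacter $2\rho\colon\BC^\times\to T$ acts on it with finitely many fixed points, since its centralizer in $G$ is $T$ and $\Gr_G^T$ is the discrete set $\Lambda$; the attractor of the fixed point $\nu$ is then $S_\nu\cap\ol\Gr{}^\lambda_G$ and its repellent is $T_\nu\cap\ol\Gr{}^\lambda_G$. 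As the relevant $*$- and $!$-restrictions commute with the closed inclusion $\ol\Gr{}^\lambda_G\hookrightarrow\Gr_G$, it suffices to argue on a fixed $\ol\Gr{}^\lambda_G$, and from now on I write $S_\nu$ and $T_\nu$ also for these finite-dimensional attractor and repellent. Finally, $\CF$ being $G_\CO$-equivariant is in particular $2\rho(\BC^\times)$-monodromic, and this monodromicity is inherited by $r_{\nu,\pm}^!\CF$ and $r_{\nu,\pm}^*\CF$, since $S_\nu$ and $T_\nu$ are $2\rho(\BC^\times)$-stable.

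Next I would invoke the contraction principle twice. By definition $S_\nu$ is the attractor of $\nu$, so the $\BC^\times$-action contracts $S_\nu$ onto $\{\nu\}$ --- the action map extends across $c=0$ --- and dually $T_\nu$ is contracted onto $\{\nu\}$ by the inverse action, as $c\to\infty$. The contraction principle asserts that if a $\BC^\times$-scheme $Y$ is contracted by $c\to0$ onto its fixed locus $Y^0$, with projection $p$ and zero-section $\iota^0$, then the natural maps yield isomorphisms of functors $p_!\simeq(\iota^0)^!$ and $p_*\simeq(\iota^0)^*$ on $\BC^\times$-monodromic complexes. Applied on $S_\nu$ to $r_{\nu,+}^!\CF$ this gives $\iota_{\nu,+}^*r_{\nu,+}^!\CF\simeq(p_\nu^+)_*\,r_{\nu,+}^!\CF$, and the $c\to\infty$ version applied on $T_\nu$ to $r_{\nu,-}^*\CF$ gives $\iota_{\nu,-}^!r_{\nu,-}^*\CF\simeq(p_\nu^-)_!\,r_{\nu,-}^*\CF$, where $p_\nu^+,p_\nu^-$ denote the maps from $S_\nu$, resp.\ $T_\nu$, to the point. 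The theorem is thus reduced to a canonical isomorphism $(p_\nu^+)_*\,r_{\nu,+}^!\CF\simeq(p_\nu^-)_!\,r_{\nu,-}^*\CF$, which is precisely one of the two mutually Verdier-dual forms of Braden's hyperbolic localization isomorphism for the fixed point $\nu$.

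For this core step I would follow~\cite{dg}. One constructs a canonical natural transformation between the two functors from the units and counits of the adjunctions $\bigl((r_{\nu,\pm})_!,\,r_{\nu,\pm}^!\bigr)$ and $\bigl(r_{\nu,\pm}^*,\,(r_{\nu,\pm})_*\bigr)$, adapted to the pair formed by the attractor and the repellent of $\nu$. To see that this transformation is an isomorphism it suffices to evaluate it on objects; a d\'evissage along the fixed-point stratification, together with the local structure of $\BC^\times$-varieties near a fixed point --- handled, in the absence of smoothness, by a deformation-to-the-normal-cone argument --- reduces the question to the linear model: a $\BC^\times$-equivariant affine space $V=V^+\oplus V^-$ over $\{\nu\}$ with $V^\pm$ of strictly positive, resp.\ negative, weights, so that $S=V^+$ and $T=V^-$, and both sides are then computed by a K\"unneth-type calculation. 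The basic input is that, for a monodromic complex on an affine space carrying strictly positive $\BC^\times$-weights, $R\Gamma$ computes the $*$-stalk at the origin and $R\Gamma_c$ the $!$-costalk.

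The genuine difficulty lies entirely in this last step: constructing the comparison map canonically and proving it is an isomorphism in the non-smooth, non-linear situation, where the germ of a $\BC^\times$-variety at a fixed point need not be a vector bundle. By contrast, the reductions of the first two paragraphs --- the passage to $\ol\Gr{}^\lambda_G$ and to the colimit $\Gr_G=\bigcup_\lambda\ol\Gr{}^\lambda_G$, the monodromicity furnished by $G_\CO$-equivariance, and the compatibility of the two contraction-principle isomorphisms with the Braden comparison map --- are formal. In fact the hyperbolic localization theorem is available in exactly the required generality in~\cite{br,dg}, so in practice one simply quotes it and performs the short reduction above.
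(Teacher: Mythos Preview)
Your proposal is correct and in fact more detailed than the paper's own treatment: the paper does not give a proof of this theorem but simply cites~\cite{br,dg}, and your sketch is exactly the standard reduction one performs to apply those results on the ind-scheme $\Gr_G$. Your final paragraph already acknowledges this --- in practice one just quotes Braden/Drinfeld--Gaitsgory after the formal reduction to a finite-type $\ol\Gr{}^\lambda_G$ --- so there is nothing to add.
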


\begin{defn}
For a sheaf $\CF\in D^b_{G_\CO}(\Gr_G)$ we define its hyperbolic stalk at $\nu$
as $\Phi_\nu(\CF):=\iota_{\nu,+}^*r_{\nu,+}^!\CF\simeq\iota_{\nu,-}^!r_{\nu,-}^*\CF$.
\end{defn}

The following dimension estimate due to~\cite{mv} is crucial for the geometric Satake.

\begin{lem}
\label{estimate}
\textup{(a)} $S_\nu\cap\Gr_G^\lambda\ne\varnothing$ iff
$T_\nu\cap\Gr_G^\lambda\ne\varnothing$ iff $\nu$ has nonzero multiplicity
in the irreducible $G^\vee$-module $V^\lambda$ with highest weight $\lambda$.
This is also equivalent to $\nu\in\ol\Gr{}_G^\lambda$.

\textup{(b)} The nonempty intersection $S_\nu\cap\ol\Gr{}_G^\lambda$ is
equidimensional of dimension $\langle\nu+\lambda,\rho^{\!\scriptscriptstyle\vee}\rangle$.

\textup{(c)} The nonempty intersection $T_\nu\cap\ol\Gr{}_G^\lambda$ is
equidimensional of dimension $\langle\nu+w_0\lambda,\rho^{\!\scriptscriptstyle\vee}\rangle$.
Here $w_0$ is the longest element of the Weyl group $W$.
\end{lem}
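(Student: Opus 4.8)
I would follow the strategy of Mirkovi\'c--Vilonen: the combinatorial equivalences in (a) and the reduction of (c) to (b) are formal, while the dimension estimate in (b) is the real content.

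\emph{Soft part.} As $\ol\Gr{}^\lambda_G$ is $G_\CO$-stable, hence stable under $N(\CK)$ and $N_-(\CK)$, the condition $\nu\in\ol\Gr{}^\lambda_G$ forces $S_\nu=N(\CK)\cdot\nu\subseteq\ol\Gr{}^\lambda_G$ and likewise $T_\nu\subseteq\ol\Gr{}^\lambda_G$; conversely, if $x\in S_\nu\cap\ol\Gr{}^\lambda_G$ then $\nu=\lim_{c\to0}2\rho(c)\cdot x$ lies in $\ol\Gr{}^\lambda_G$ because that variety is projective and $2\rho$-stable (symmetrically for $T_\nu$ as $c\to\infty$). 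Hence $S_\nu\cap\ol\Gr{}^\lambda_G\ne\varnothing\iff\nu\in\ol\Gr{}^\lambda_G\iff T_\nu\cap\ol\Gr{}^\lambda_G\ne\varnothing$, and the $T$-fixed points of $\ol\Gr{}^\lambda_G$ are exactly the coweights whose dominant $W$-conjugate is $\le\lambda$ in the root order, i.e.\ the weights of $V^\lambda$. That $S_\nu$, resp.\ $T_\nu$, already meets the \emph{open} stratum $\Gr{}^\lambda_G$ will fall out of the dimension count. For (c): a representative $\dot w_0\in G\subseteq G_\CO$ of the longest element acts on $\Gr_G$ by left translation, its conjugation exchanges $N(\CK)$ with $N_-(\CK)$, so it maps $S_{w_0\nu}$ isomorphically onto $T_\nu$ while preserving each $\ol\Gr{}^\lambda_G$; thus (c) is (b) applied to $w_0\nu$.

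\emph{Upper bound in (b) --- the crux.} Using the Iwasawa decomposition $G_\CK=N(\CK)T(\CK)G_\CO$ (equivalently, the partition $\Gr_G=\bigsqcup_\eta S_\eta$), identify $S_\nu$ with $N(\CK)/\bigl(N(\CK)\cap z^\nu G_\CO z^{-\nu}\bigr)$. A point $nz^\nu G_\CO\in S_\nu$ lies in $\Gr{}^\mu_G$ iff $nz^\nu\in G_\CO z^\mu G_\CO$, a condition which bounds the pole orders of the root-subgroup coordinates of $n\in N(\CK)$ (too deep a pole forces the relative position above $\mu$). Filtering $N(\CK)$ by $z$-degree, accounting for the non-abelianness of $N$, and quotienting by the stabilizer, one gets $\dim(S_\nu\cap\Gr{}^\mu_G)\le\langle\nu+\mu,\rhovee\rangle$ for every dominant $\mu$; since $\ol\Gr{}^\lambda_G=\bigsqcup_{\mu\le\lambda}\Gr{}^\mu_G$ and $\langle\mu+\nu,\rhovee\rangle$ is monotone in $\mu$, this yields $\dim(S_\nu\cap\ol\Gr{}^\lambda_G)\le\langle\nu+\lambda,\rhovee\rangle$. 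Alternatively one may induct on $\lambda$ via the proper, surjective, birational convolution morphism $m\colon\ol\Gr{}^{\lambda_1}_G\widetilde{\times}\cdots\widetilde{\times}\ol\Gr{}^{\lambda_k}_G\to\ol\Gr{}^\lambda_G$ for $\lambda=\lambda_1+\dots+\lambda_k$, using that $m^{-1}(S_\nu)$ is stratified along chains $0=\mu_0,\dots,\mu_k=\nu$ by twisted products of the semi-infinite intersections $\ol\Gr{}^{\lambda_j}_G\cap S_{\mu_j-\mu_{j-1}}$ of the factors, with dimensions adding --- reducing the estimate to the case of a single (ultimately fundamental) coweight.

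\emph{Lower bound, equidimensionality, and completion of (a).} The extremal case $\nu\in W\lambda$ is immediate: $\nu\in\Gr{}^\lambda_G$, which is smooth, and $\Gr{}^\lambda_G\cap S_\nu$ is the corresponding Bruhat-type cell --- an affine-space bundle over a Schubert cell of $G/P_\lambda$ --- irreducible of dimension $\langle\nu+\lambda,\rhovee\rangle$. One propagates to all weights $\nu$ of $V^\lambda$ either by an $\fsl_2$-recursion in the simple directions (each step shifts $\nu$ by a simple coroot and the dimension by exactly one, bootstrapping from the dominant case) or by the fusion degeneration in the Beilinson--Drinfeld Grassmannian, under which $\ol\Gr{}^\lambda_G$ degenerates to products $\ol\Gr{}^{\lambda'}_G\times\ol\Gr{}^{\lambda''}_G$ ($\lambda=\lambda'+\lambda''$) and semi-infinite orbits degenerate to products of such, so that no component of the special fibre is smaller than the generic one. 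Combined with the upper bound, this forces every component of $\ol\Gr{}^\lambda_G\cap S_\nu$ to have dimension exactly $\langle\nu+\lambda,\rhovee\rangle$; since $\langle\mu+\nu,\rhovee\rangle<\langle\lambda+\nu,\rhovee\rangle$ for $\mu<\lambda$, the top-dimensional part lies in the open stratum, so $S_\nu$ (and by the reduction, $T_\nu$) meets $\Gr{}^\lambda_G$, completing (a). \textbf{The main obstacle is the upper bound}: making the $z$-degree count --- equivalently, the convolution count together with its fundamental-coweight base case --- genuinely uniform, i.e.\ controlling the dimensions of the relevant finite-dimensional pieces of $N(\CK)$ independently of the intermediate ``step'' coweights, is where all the real work sits; the rest is either formal (the combinatorics of (a), the $w_0$-reduction of (c)) or comparatively soft (the lower bound, once the extremal case and a propagation mechanism are in hand).
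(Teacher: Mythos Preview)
The paper does not prove this lemma; it simply attributes the result to Mirkovi\'c--Vilonen~\cite{mv} and states it without argument. Your outline is essentially a sketch of the MV proof, so there is no distinct ``paper's approach'' to compare against.

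That said, there is a genuine slip in your ``soft part''. You write that $\ol\Gr{}^\lambda_G$ is $G_\CO$-stable and ``hence stable under $N(\CK)$ and $N_-(\CK)$'', concluding $S_\nu=N(\CK)\cdot\nu\subseteq\ol\Gr{}^\lambda_G$. This is false: $N(\CK)$ is not a subgroup of $G_\CO$ (it contains loops with arbitrary poles), and indeed $S_\nu$ is infinite-dimensional while $\ol\Gr{}^\lambda_G$ is projective, so the inclusion is impossible. The correct, and trivial, argument for the implication $\nu\in\ol\Gr{}^\lambda_G\Rightarrow S_\nu\cap\ol\Gr{}^\lambda_G\ne\varnothing$ is that $\nu$ itself lies in both sets. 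Your converse via $\lim_{c\to0}2\rho(c)\cdot x$ and projectivity is fine.

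One further remark: your $w_0$-reduction of (c) to (b) is correct, but carrying it through yields $\dim(T_\nu\cap\ol\Gr{}^\lambda_G)=\langle w_0\nu+\lambda,\rhovee\rangle=\langle\lambda-\nu,\rhovee\rangle$, not $\langle\nu+w_0\lambda,\rhovee\rangle$ as printed in the statement (the latter is negative for $\nu\ne\lambda$ dominant, so appears to be a typo in the notes). The remainder of your outline --- upper bound via Iwasawa or convolution, lower bound via extremal cells and propagation --- follows the MV strategy faithfully, with the actual pole-order bookkeeping left (as you acknowledge) as the real work.
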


\begin{cor}
\label{grading}
\textup{(a)} For $\CF\in\on{Perv}_{G_\CO}(\Gr_G)$, the hyperbolic stalk $\Phi_\nu(\CF)$
is concentrated in degree $\langle\nu,2\rho^{\!\scriptscriptstyle\vee}\rangle$.

\textup{(b)} There is a canonical direct sum decomposition $H^\bullet(\Gr_G,\CF)=
\bigoplus_{\nu\in\Lambda}\Phi_\nu(\CF)$.

\textup{(c)} The functor $H^\bullet(\Gr_G,-)\colon \on{Perv}_{G_\CO}(\Gr_G)\to
\on{Vect}^{\on{gr}}$, as well as its upgrade $\bigoplus_{\nu\in\Lambda}\Phi_\nu
\colon \on{Perv}_{G_\CO}(\Gr_G)\to\Rep(T^\vee)$, is exact and conservative.
\end{cor}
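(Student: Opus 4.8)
The proof hinges entirely on part (a); granting it, (b) and (c) follow by soft arguments, so I would organize things as follows.

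\emph{Part (a), the crux.} By the theorem on hyperbolic stalks above, $\Phi_\nu(\CF)$ may be computed either via the attractor $S_\nu$ (as $\iota_{\nu,+}^*r_{\nu,+}^!\CF$) or via the repellent $T_\nu$ (as $\iota_{\nu,-}^!r_{\nu,-}^*\CF$), and by the contraction principle for the $2\rho(\BC^\times)$-action — which contracts $S_\nu$, resp.\ $T_\nu$, onto $\nu$ — these identify, up to the standard renormalizing shift, with $R\Gamma_c(S_\nu,\CF|_{S_\nu})$, resp.\ $R\Gamma_c(T_\nu,\CF|_{T_\nu})$, where $\CF|_{S_\nu}$ denotes the $*$-restriction. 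Take the $S_\nu$ description and run the spectral sequence of the stratification $S_\nu=\bigsqcup_{\mu\in\Lambda^+}(S_\nu\cap\Gr_G^\mu)$. Since $\CF$ is perverse, the $*$-restriction $i_{\Gr_G^\mu}^*\CF$ is concentrated in cohomological degrees $\le-\dim\Gr_G^\mu=-\langle\mu,2\rho^{\!\scriptscriptstyle\vee}\rangle$; by \lemref{estimate}(b) each nonempty $S_\nu\cap\Gr_G^\mu$ is of pure dimension $\langle\nu+\mu,\rho^{\!\scriptscriptstyle\vee}\rangle$; and compactly supported cohomology of a $d$-dimensional variety with coefficients in degrees $\le m$ vanishes above degree $2d+m$. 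Hence every stratum forces vanishing above $2\langle\nu+\mu,\rho^{\!\scriptscriptstyle\vee}\rangle-\langle\mu,2\rho^{\!\scriptscriptstyle\vee}\rangle=\langle\nu,2\rho^{\!\scriptscriptstyle\vee}\rangle$ — the $\mu$-dependence cancels exactly — so $\Phi_\nu(\CF)$ lives in degrees $\le\langle\nu,2\rho^{\!\scriptscriptstyle\vee}\rangle$. For the opposite bound I would run the identical argument on the Verdier dual $\BD\CF$ (again perverse), now invoking the cosupport condition on $\CF$ together with \lemref{estimate}(c) for the repellents, and then dualize. The genuinely delicate point is that the two estimates must meet \emph{exactly} at $\langle\nu,2\rho^{\!\scriptscriptstyle\vee}\rangle$ rather than leaving a gap; this is where the two halves of \lemref{estimate} get used in tandem, and it is also where the shift conventions in the contraction principle on the infinite-dimensional orbits $S_\nu,T_\nu$ have to be pinned down carefully, exactly as in~\cite{mv}.

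\emph{Part (b).} The attractor cells stratify $\Gr_G$, with $\ol{S_\nu}=\bigsqcup_{\mu\le\nu}S_\mu$ where $\mu\le\nu$ means that $\nu-\mu$ is a sum of simple coroots; since $\langle\alpha_i,2\rho^{\!\scriptscriptstyle\vee}\rangle>0$ for every $i$, two comparable cells $S_\mu\subsetneq\ol{S_\nu}$ satisfy $\langle\mu,2\rho^{\!\scriptscriptstyle\vee}\rangle<\langle\nu,2\rho^{\!\scriptscriptstyle\vee}\rangle$. Consequently, for each integer $k$ the union $S_{=k}:=\bigsqcup_{\langle\nu,2\rho^{\!\scriptscriptstyle\vee}\rangle=k}S_\nu$ has each of its cells simultaneously open and closed in it, so $R\Gamma_c(S_{=k},\CF|_{S_{=k}})=\bigoplus_{\langle\nu,2\rho^{\!\scriptscriptstyle\vee}\rangle=k}\Phi_\nu(\CF)$ canonically, and this is concentrated in degree $k$ by (a). Filtering $\Gr_G$ by the closed subsets $\bigsqcup_{\langle\nu,2\rho^{\!\scriptscriptstyle\vee}\rangle\le k}S_\nu$ and using that $\Gr_G$ is ind-proper (so $R\Gamma=R\Gamma_c$) yields a filtration of $H^\bullet(\Gr_G,\CF)$ whose successive quotients are the $R\Gamma_c(S_{=k},\CF|_{S_{=k}})$; as these lie in pairwise distinct cohomological degrees the connecting maps vanish, and we obtain the canonical decomposition $H^k(\Gr_G,\CF)=\bigoplus_{\langle\nu,2\rho^{\!\scriptscriptstyle\vee}\rangle=k}\Phi_\nu(\CF)$, which is (b) (and is $\Lambda$-graded, i.e.\ compatible with the $T^\vee$-structure, by construction; only finitely many summands are nonzero, by \lemref{estimate}(a)).

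\emph{Part (c).} Exactness: given a short exact sequence $0\to\CF'\to\CF\to\CF''\to0$ in $\Perv_{G_\CO}(\Gr_G)$, applying $\Phi_\nu$ to the associated triangle gives a long exact sequence all of whose terms are concentrated in the single degree $\langle\nu,2\rho^{\!\scriptscriptstyle\vee}\rangle$ by (a), so it breaks up into $0\to\Phi_\nu(\CF')\to\Phi_\nu(\CF)\to\Phi_\nu(\CF'')\to0$; hence $\Phi_\nu$ is exact, and therefore so are $\bigoplus_\nu\Phi_\nu$ and, via (b), $H^\bullet(\Gr_G,-)$. Conservativity: if $0\ne\CF\in\Perv_{G_\CO}(\Gr_G)$, pick $\Gr_G^\lambda$ open in $\supp\CF$; then $i_{\Gr_G^\lambda}^*\CF$ is a nonzero shifted local system, and since $S_\lambda\cap\Gr_G^\lambda$ is open dense in $\Gr_G^\lambda$ (of dimension $\langle\lambda,2\rho^{\!\scriptscriptstyle\vee}\rangle=\dim\Gr_G^\lambda$ by \lemref{estimate}(b)) and is an affine space, its top compactly supported cohomology contributes nontrivially to $\Phi_\lambda(\CF)=R\Gamma_c(S_\lambda,\CF|_{S_\lambda})$ in the extremal degree $\langle\lambda,2\rho^{\!\scriptscriptstyle\vee}\rangle$, whence $\Phi_\lambda(\CF)\ne0$ and so $H^\bullet(\Gr_G,\CF)\ne0$. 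This gives all three assertions, the only serious work being the degree concentration in (a).
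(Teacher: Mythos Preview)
Your argument is correct and is exactly the standard Mirkovi\'c--Vilonen argument that the paper is invoking: the corollary is stated without proof as an immediate consequence of the dimension estimates in \lemref{estimate}, and your proof of (a) via the support/cosupport conditions together with parts (b) and (c) of that lemma, followed by the Cousin filtration for (b) and the single-degree long exact sequence for (c), is precisely that deduction.

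One small point worth tightening: the contraction principle gives $\iota_{\nu,+}^*\simeq R\Gamma(S_\nu,-)$ and $\iota_{\nu,-}^!\simeq R\Gamma_c(T_\nu,-)$, so with the paper's conventions $\Phi_\nu(\CF)=R\Gamma(S_\nu,r_{\nu,+}^!\CF)=R\Gamma_c(T_\nu,\CF|_{T_\nu})$, whereas $R\Gamma_c(S_\nu,\CF|_{S_\nu})$ is the \emph{other} hyperbolic localization functor $\iota_{\nu,+}^!r_{\nu,+}^*$. This does not affect your argument---one simply swaps which description furnishes the upper bound and which the lower (using (b) of \lemref{estimate} with the cosupport condition for $R\Gamma(S_\nu,r_{\nu,+}^!\CF)$, and (c) with the support condition for $R\Gamma_c(T_\nu,\CF|_{T_\nu})$)---and you already flag that the shift and $*$/$!$ bookkeeping on the semi-infinite orbits needs care. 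With that adjustment the two bounds still meet at $\langle\nu,2\rho^{\!\scriptscriptstyle\vee}\rangle$ and everything else goes through verbatim.
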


\subsection{Convolution}
\label{convo}
We have the following basic diagram:
\begin{equation}
  \label{convolut}
\Gr_G\times\Gr_G\xleftarrow{p}G_\CK\times\Gr_G\xrightarrow{q}
\Gr_G\wt\times\Gr_G\xrightarrow{m}\Gr_G.
\end{equation}
Here $\Gr_G\wt\times\Gr_G=G_\CK\stackrel{G_\CO}{\times}\Gr_G=
(G_\CK\times\Gr_G)/((g,x)\sim(gh^{-1},hx),\ h\in G_\CO)$.
Furthermore, $p$ is the projection on the first factor and identity on the second
factor, and the composition $m\circ q$ is the action morphism
$G_\CK\times\Gr_G\to\Gr_G$ (which clearly factors through
$G_\CK\stackrel{G_\CO}{\times}\Gr_G$).

\begin{defn}
Given $\CF_1,\CF_2\in D^b_{G_\CO}(\Gr_G)$, their convolution
$\CF_1\star\CF_2\in D^b_{G_\CO}(\Gr_G)$ is
defined as $\CF_1\star\CF_2:=m_*(\CF_1\wt\boxtimes\CF_2)$, where $\CF_1\wt\boxtimes\CF_2$ is the
descent of $p^*(\CF_1\boxtimes\CF_2)$, that is
$q^*(\CF_1\wt\boxtimes\CF)=p^*(\CF_1\boxtimes\CF_2)$.
\end{defn}

The next lemma is due to~\cite{lu,mv}. It follows from the
{\em stratified semismallness} of $m\colon \ol\Gr{}_G^{\lambda,\mu}:=
p^{-1}(\ol\Gr{}_G^\lambda)\stackrel{G_\CO}{\times}\ol\Gr{}_G^\mu\to
\ol\Gr{}_G^{\lambda+\mu}$. Here $\ol\Gr{}_G^{\lambda,\mu}$ is stratified
by the union of $\Gr_G^{\nu,\theta}:=
p^{-1}(\Gr_G^\nu)\stackrel{G_\CO}{\times}\Gr_G^\theta$ over
$\nu\leq\lambda,\ \theta\leq\mu$. The stratified semismallness in turn
follows from the dimension estimate of~Lemma~\ref{estimate}.

\begin{lem}
\label{exact}
Given $\CF_1,\CF_2\in\on{Perv}_{G_\CO}(\Gr_G)$, their convolution
$\CF_1\star\CF_2$ lies in $\on{Perv}_{G_\CO}(\Gr_G)$ as well.
\end{lem}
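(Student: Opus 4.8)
The plan is to reduce Lemma~\ref{exact} to the single standard fact that a proper stratified-semismall morphism carries the intersection cohomology sheaf of its stratified source to a perverse sheaf. First I would use that $\Perv_{G_\CO}(\Gr_G)$ is a semisimple abelian category whose simple objects are the $\IC(\ol\Gr{}_G^\lambda)$, $\lambda\in\Lambda^+$ --- a foundational fact that follows from the vanishing in odd degrees of the stalks of these $\IC$ sheaves. Since $(-)\wt\boxtimes(-)$ and $m_*$ are additive, it then suffices to treat $\CF_1=\IC(\ol\Gr{}_G^\lambda)$ and $\CF_2=\IC(\ol\Gr{}_G^\mu)$.

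Next I would identify the twisted product: in diagram~\eqref{convolut} the morphisms $p$ and $q$ are $G_\CO$-torsors, in particular smooth of the same relative dimension, so pulling $\CF_1\boxtimes\CF_2=\IC(\ol\Gr{}_G^\lambda\times\ol\Gr{}_G^\mu)$ back along $p$ and descending along $q$ yields $\CF_1\wt\boxtimes\CF_2=\IC(\ol\Gr{}_G^{\lambda,\mu})$, the $\IC$ sheaf of the twisted product $\ol\Gr{}_G^{\lambda,\mu}=p^{-1}(\ol\Gr{}_G^\lambda)\stackrel{G_\CO}{\times}\ol\Gr{}_G^\mu$; here one uses that $\ol\Gr{}_G^{\lambda,\mu}\to\ol\Gr{}_G^\lambda$ is a locally trivial fibration with fibre $\ol\Gr{}_G^\mu$. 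Hence $\CF_1\star\CF_2=m_*\IC(\ol\Gr{}_G^{\lambda,\mu})$, and since $\ol\Gr{}_G^{\lambda,\mu}$ is projective the morphism $m$ is proper, so $m_*=m_!$.

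The core step is then to verify that $m_*\IC(\ol\Gr{}_G^{\lambda,\mu})$ satisfies the support and cosupport conditions of a perverse sheaf. For the support condition I would invoke the stratified semismallness of $m$ recalled above, which itself rests on the dimension estimates of \lemref{estimate}: for every stratum $\Gr_G^{\nu,\theta}$ of $\ol\Gr{}_G^{\lambda,\mu}$ lying over a stratum $\Gr_G^\kappa$ of $\ol\Gr{}_G^{\lambda+\mu}$, and for $x\in\Gr_G^\kappa$, the intersection $m^{-1}(x)\cap\Gr_G^{\nu,\theta}$ has dimension at most $\tfrac12(\dim\Gr_G^{\nu,\theta}-\dim\Gr_G^\kappa)$. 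Feeding this into the support estimate for $\IC(\ol\Gr{}_G^{\lambda,\mu})$ along the strata $\Gr_G^{\nu,\theta}$ and computing the stalk of $m_*\IC$ at $x$ through the induced stratification of the fibre $m^{-1}(x)$, one finds this stalk concentrated in degrees $\le-\dim\Gr_G^\kappa$ --- precisely the support half of the perversity conditions. The cosupport condition is then automatic: $\IC(\ol\Gr{}_G^{\lambda,\mu})$ is Verdier self-dual, and $m_*=m_!$ commutes with Verdier duality by properness of $m$, so $m_*\IC(\ol\Gr{}_G^{\lambda,\mu})$ is self-dual and its cosupport condition is Verdier-dual to the support condition just checked. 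Finally $p$, $q$, and $m$ are $G_\CO$-equivariant, so $\CF_1\star\CF_2$ lands in $\Perv_{G_\CO}(\Gr_G)$.

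The one place where genuine geometry enters --- and the step I expect to be the main obstacle --- is the passage from \lemref{estimate} to the stratified semismallness of $m$ and to the resulting stalk estimate; granting that, as the excerpt does, the remainder is the standard ``proper plus semismall implies preservation of perversity'' mechanism together with Verdier duality. A minor technical point to watch is the bookkeeping of cohomological shifts in the definition of $\wt\boxtimes$, so that the self-dual normalization of the $\IC$ sheaves is used consistently throughout.
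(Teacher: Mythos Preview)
Your proposal is correct and follows exactly the approach the paper indicates: the text immediately preceding the lemma says it ``follows from the \emph{stratified semismallness} of $m\colon \ol\Gr{}_G^{\lambda,\mu}\to\ol\Gr{}_G^{\lambda+\mu}$\ldots\ [which] in turn follows from the dimension estimate of~Lemma~\ref{estimate}.'' You have simply unpacked this sentence --- reduction to $\IC$ sheaves, identification of $\CF_1\wt\boxtimes\CF_2$ with $\IC(\ol\Gr{}_G^{\lambda,\mu})$, the support estimate from semismallness, and the cosupport estimate by self-duality and properness --- which is the standard argument.
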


In order to define a commutativity constraint for $\star$, we will need
an equivalent construction of the monoidal structure on $\on{Perv}_{G_\CO}(\Gr_G)$
via {\em fusion} due to V.~Drinfeld.

\subsection{Fusion}
Let $X$ be a smooth curve, e.g.\ $X=\BA^1$. We have the following basic diagram:
\begin{equation*}
\begin{CD}
(\Gr_G\wt\times\Gr_G)_X @>i>> \Gr_{G,X}\wt\times\Gr_{G,X} @<j<< (\Gr_{G,X}\times\Gr_{G,X})|_U \\
@V{m_X}VV @V{m_{X^2}}VV @V{\wr}VV \\
\Gr_{G,X} @>i>> \Gr_{G,X^2} @<j<< (\Gr_{G,X}\times\Gr_{G,X})|_U \\
@V{\pi}VV @V{\pi}VV @V{\pi}VV \\
X @>\Delta>> X^2 @<j<< U.
\end{CD}
\end{equation*}

Here $U\hookrightarrow X^2$ is the open embedding of the complement to the
diagonal $\Delta_X\hookrightarrow X^2$.  Furthermore, for $n\in\BN,\ \Gr_{G,X^n}$
is the moduli space of the following data: $\{(x_1,\ldots,x_n)\in X^n,\ \CP_G,\ \tau\}$,
where $\CP_G$ is a $G$-bundle on $X$, and $\tau$ is a trivialization of $\CP_G$
on $X\setminus\{x_1,\ldots,x_n\}$. The projection $\pi\colon \Gr_{G,X^n}\to X^n$
forgets the data of $\CP_G$ and $\tau$. Note that $\Gr_{G,X^2}|_U\simeq
(\Gr_{G,X}\times\Gr_{G,X})|_U$, while $\Gr_{G,X^2}|_{\Delta_X}\simeq\Gr_{G,X}$.
Furthermore, $\Gr_{G,X}\wt\times\Gr_{G,X}$ is the moduli space of the following
data: $\{(x_1,x_2)\in X^2,\ \CP_G^1,\CP_G^2,\tau,\sigma\}$, where
$\CP_G^1,\CP_G^2$ are $G$-bundles on $X;\ \sigma\colon \CP_G^1|_{X\setminus x_2}\iso
\CP_G^2|_{X\setminus x_2}$, and $\tau$ is a trivialization of $\CP_G^1$ on $X\setminus x_1$.
Note that $(\Gr_{G,X}\wt\times\Gr_{G,X})|_U\simeq(\Gr_{G,X}\times\Gr_{G,X})|_U$,
while $(\Gr_{G,X}\wt\times\Gr_{G,X})|_{\Delta_X}$ is fibered over $X$ with fibers isomophic
to $\Gr_G\wt\times\Gr_G$. Finally, $m_{X^2}\colon \Gr_{G,X}\wt\times\Gr_{G,X}\to\Gr_{G,X^2}$
takes $(x_1,x_2,\CP_G^1,\CP_G^2,\tau,\sigma)$ to $(x_1,x_2,\CP_G^2,\tau')$ where
$\tau'=\sigma\circ\tau|_{X\setminus\{x_1,x_2\}}$. All the squares in the above diagram
are cartesian. The stratified semismallness property of the convolution morphism $m$
used in the proof of~Lemma~\ref{exact} implies the {\em stratified smallness} property
of the relative convolution morphism $m_{X^2}$.

Now given $\CF_1,\CF_2\in D^b_{G_\CO}(\Gr_G)$,
we can define the constructible complexes $\CF_{1,X},\CF_{2,X}$ on $\Gr_{G,X}$
smooth over $X$, and by descent  similarly to~Section~\ref{convo},
a constructible complex $\CF_{1,X}\wt\boxtimes\CF_{2,X}$ on $\Gr_{G,X}\wt\times\Gr_{G,X}$
smooth over $X^2$. Note that
$(\CF_{1,X}\wt\boxtimes\CF_{2,X})|_U=(\CF_{1,X}\boxtimes\CF_{2,X})|_U$.
 For simplicity, let us take $X=\BA^1$. Then by the proper base change
for nearby cycles $\Psi_{x_1-x_2}m_{X^2*}(\CF_{1,X}\wt\boxtimes\CF_{2,X})$ on
$\Gr_{G,X^2}$ we deduce $(\CF_1\star\CF_2)_X=
\Psi_{x_1-x_2}(\CF_{1,X}\boxtimes\CF_{2,X})|_U$. The RHS being manifestly symmetric,
we deduce the desired commutativity constraint for the convolution product $\star$.

Also, the above smoothness of $\CF_{1,X}\wt\boxtimes\CF_{2,X}$ over $X^2$
implies that $\pi_*m_{X^2*}(\CF_{1,X}\wt\boxtimes\CF_{2,X})$ is a constant sheaf
on $X^2$. Since its diagonal stalks are $H^\bullet(\Gr_G,\CF_1\star\CF_2)$, and
the off-diagonal stalks are $H^\bullet(\Gr_G,\CF_1)\otimes H^\bullet(\Gr_G,\CF_2)$,
we obtain that the cohomology functor $\on{Perv}_{G_\CO}(\Gr_G)\to\on{Vect}$
is a {\em tensor} functor: $H^\bullet(\Gr_G,\CF_1\star\CF_2)\iso
H^\bullet(\Gr_G,\CF_1)\otimes H^\bullet(\Gr_G,\CF_2)$.

\begin{cor}
The abelian category $\on{Perv}_{G_\CO}(\Gr_G)$ is equipped with a
symmetric monoidal structure $\star$ and a fiber functor
$H^\bullet(\Gr_G,-)$.
\end{cor}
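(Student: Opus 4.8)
The plan is to assemble the pieces established above --- that $\star$ preserves perversity (Lemma~\ref{exact}), that fusion supplies a commutativity constraint, and that $H^\bullet(\Gr_G,-)$ is exact, conservative and tensor --- into a coherent symmetric monoidal structure; the only genuinely new work is to pin down the unit object and to verify the coherence (pentagon and hexagon) axioms, together with their compatibility.

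First I would fix the unit object: the skyscraper sheaf $\delta$ supported at the point stratum $\Gr_G^0=\{e\}\subset\Gr_G$, with the left and right unit isomorphisms coming from the fact that $m$ restricted to $\{e\}\wt\times\Gr_G$ (resp.\ to $\Gr_G\wt\times\{e\}$) is the identity map of $\Gr_G$. For associativity I would use the threefold convolution diagram: the iterated space $\Gr_G\wt\times\Gr_G\wt\times\Gr_G$ maps properly to $\Gr_G$ by $m^{(3)}$ and receives compatible proper maps realizing it both as $(\Gr_G\wt\times\Gr_G)\wt\times\Gr_G$ and as $\Gr_G\wt\times(\Gr_G\wt\times\Gr_G)$; proper pushforward together with smooth descent then give a canonical isomorphism $(\CF_1\star\CF_2)\star\CF_3\iso\CF_1\star(\CF_2\star\CF_3)$, and the pentagon identity follows from the analogous compatibility of the fourfold convolution $\Gr_G\wt\times\Gr_G\wt\times\Gr_G\wt\times\Gr_G\to\Gr_G$ with its five bracketings.

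For the commutativity constraint and the hexagon axioms I would pass, as in the fusion discussion above, to the Beilinson--Drinfeld Grassmannians $\Gr_{G,X^n}$ over $X=\BA^1$. Recall that $(\CF_1\star\CF_2)_X\simeq\Psi_{x_1-x_2}\big((\CF_{1,X}\boxtimes\CF_{2,X})|_U\big)$ with $U=X^2\setminus\Delta_X$, and the right-hand side carries a tautological $\BZ/2$-symmetry from swapping the two points of $X^2$; this defines $\CF_1\star\CF_2\iso\CF_2\star\CF_1$. The main obstacle is coherence: these two constraints must satisfy pentagon and hexagon and be mutually compatible, and checking this on iterated convolution spaces directly is unpleasant. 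The clean route is to realize all the constraints simultaneously over the tower $\Gr_{G,X^n}$: it carries an $S_n$-action covering the permutation action on $X^n$, every iterate of $\star$ is recovered from the factorization isomorphisms $\Gr_{G,X^n}|_{U_n}\simeq(\Gr_{G,X})^n|_{U_n}$ by successive specialization along the partial diagonals, and the coherence diagrams for $\star$ then reduce to the manifest $S_n$-equivariance of this picture together with the standard functoriality of nearby cycles for the family of functions $x_i-x_{i+1}$ on $X^n$. In particular hexagon, and its compatibility with pentagon, are consequences of the $S_3$-symmetry of $\Gr_{G,X^3}$.

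Finally, for the fiber functor: $H^\bullet(\Gr_G,-)$ is exact and conservative by Corollary~\ref{grading}(c), and the tensor isomorphism $H^\bullet(\Gr_G,\CF_1\star\CF_2)\iso H^\bullet(\Gr_G,\CF_1)\otimes H^\bullet(\Gr_G,\CF_2)$ was already produced above from the constancy over $X^2$ of $\pi_*m_{X^2*}(\CF_{1,X}\wt\boxtimes\CF_{2,X})$; running the same argument over $X^3$ shows this isomorphism is compatible with the associativity and commutativity constraints, so $H^\bullet(\Gr_G,-)$ is a faithful exact symmetric monoidal functor to $\on{Vect}$, i.e.\ a fiber functor. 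Beyond the coherence bookkeeping, the only subtlety I would watch is the cohomological normalization (the shift making the $\CF_{i,X}$ perverse over $X$, and the behaviour of $\Psi$ under it) so that hexagon holds on the nose and $H^\bullet$ is symmetric monoidal without a spurious sign.
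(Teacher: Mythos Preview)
The paper gives no separate proof of this corollary: it is stated as an immediate summary of the preceding fusion discussion (Lemma~\ref{exact} for preservation of perversity, the nearby-cycles identification $(\CF_1\star\CF_2)_X=\Psi_{x_1-x_2}(\CF_{1,X}\boxtimes\CF_{2,X})|_U$ for commutativity, and the constancy of $\pi_*m_{X^2*}(\CF_{1,X}\wt\boxtimes\CF_{2,X})$ over $X^2$ for the tensor property of $H^\bullet$). Your proposal is correct and follows exactly this route, simply spelling out what the paper leaves implicit: the unit object, the associator via the threefold convolution space, and the coherence axioms via the $S_n$-equivariance of the Beilinson--Drinfeld tower $\Gr_{G,X^n}$.

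One remark on your final caveat: the sign issue you flag is real and is in fact swept under the rug in this survey. The naive commutativity constraint coming from the $\BZ/2$-symmetry of $\Gr_{G,X^2}|_U$ does \emph{not} make $H^\bullet(\Gr_G,-)$ symmetric monoidal into ungraded $\on{Vect}$; one must twist by the sign $(-1)^{\langle\lambda,2\rhovee\rangle}$ on each connected component (equivalently, modify the constraint on the odd-parity piece of $\on{Perv}_{G_\CO}(\Gr_G)$). This is handled carefully in~\cite{mv} and~\cite{bd} but not here. For the bare statement of the corollary --- that \emph{some} symmetric monoidal structure and fiber functor exist --- the naive constraint with target $\on{Vect}^{\on{gr}}$ already suffices, so your argument goes through; but you are right that the normalization matters for the subsequent identification of the Tannakian group with $G^\vee$ rather than a twisted form.
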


By Tannakian formalism, the tensor category $\on{Perv}_{G_\CO}(\Gr_G)$ must
be equivalent to $\on{Rep}(G')$ for a proalgebraic group $G'$. It remains to
identify $G'$ with the Langlands dual group $G^\vee$. From the semisimplicity
of $\on{Perv}_{G_\CO}(\Gr_G)$, the group $G'$ must be reductive. The upgraded
fiber functor $\bigoplus_{\nu\in\Lambda}\Phi_\nu
\colon \on{Perv}_{G_\CO}(\Gr_G)\to\Rep(T^\vee)$ is tensor since the nearby cycles
commute with hyperbolic stalks by~\cite[Proposition~5.4.1.(2)]{na0}. Hence we
obtain a homomorphism $T^\vee\hookrightarrow G'$. Now it is easy to identify
$G'$ with $G^\vee$ using~Lemma~\ref{estimate}(a). We have proved

\begin{thm}
There is a tensor equivalence $\BS\colon \on{Rep}(G^\vee),\otimes\iso
\on{Perv}_{G_\CO}(\Gr_G),\star$.
\end{thm}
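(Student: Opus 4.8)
The plan is to assemble the Tannakian reconstruction from the pieces already in place and then pin down the reconstructed group. Everything hinges on three facts established above: (i) $\on{Perv}_{G_\CO}(\Gr_G)$ carries a symmetric monoidal structure $\star$ with fiber functor $F:=H^\bullet(\Gr_G,-)$ (the preceding Corollary); (ii) this category is semisimple (follows from stratified semismallness / parity vanishing, Lemma~\ref{exact} and the decomposition theorem); and (iii) the upgraded functor $\bigoplus_{\nu\in\Lambda}\Phi_\nu$ to $\Rep(T^\vee)$ is itself tensor, because nearby cycles commute with hyperbolic stalks. The strategy is: run Tannakian formalism to get $\on{Perv}_{G_\CO}(\Gr_G)\simeq\Rep(G')$; show $G'$ is connected reductive; produce a maximal torus $T^\vee\hookrightarrow G'$ from (iii); compute the root/coroot datum of $G'$ via $\SL_2$/$\PGL_2$ reductions on subminuscule strata; and conclude $G'\cong G^\vee$ by uniqueness of reductive groups with a given root datum.

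First I would apply the Tannaka–Krein reconstruction theorem: a $\CC$-linear abelian rigid symmetric monoidal category with $\End(\one)=\CC$ and an exact faithful tensor functor $F$ to $\on{Vect}$ is equivalent to $\Rep(G')$, where $G'=\underline{\Aut}^\otimes(F)$ is an affine (pro)algebraic group scheme; rigidity here comes from the involution $\CF\mapsto\mathbb{D}\CF$ (Verdier duality composed with the inversion $g\mapsto g^{-1}$ on $\Gr_G$), which is a tensor autoequivalence providing duals. Since $\on{Perv}_{G_\CO}(\Gr_G)$ is semisimple with simple objects $\IC(\ol\Gr{}^\lambda_G)$ indexed by $\Lambda^+$, the category $\Rep(G')$ is semisimple, so $G'$ is (pro)reductive; finiteness of the tensor generators $\IC(\ol\Gr{}^\lambda_G)$ for $\lambda$ a fundamental coweight shows $G'$ is of finite type, i.e. genuinely reductive. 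Connectedness follows because the simple objects are already indexed by a sublattice-monoid $\Lambda^+$ spanning a lattice of the expected rank with no extra component group — concretely, $\on{Perv}_{G_\CO}(\Gr_G)$ has no nontrivial Tannakian subcategory of finite-dimensional $\Rep$ of a finite group, as every $\IC$ sheaf has full support data forcing $\Gr_G$ connected; equivalently $\mathrm{Spec}$ of the subring of $\otimes$-invariants is a point.

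Next I would identify a maximal torus and the weight lattice. By (iii) the grading functor $\bigoplus_\nu\Phi_\nu\colon\on{Perv}_{G_\CO}(\Gr_G)\to\Rep(T^\vee)$ is a tensor functor refining $F$, hence corresponds to a homomorphism $T^\vee\to G'$; by Corollary~\ref{grading}(c) it is conservative, so this homomorphism is a closed embedding, and Lemma~\ref{estimate}(a) shows the character $\nu$ of $T^\vee$ occurs in $\BS(V^\lambda)$ with exactly the multiplicity $\dim V^\lambda_\nu$ of the $G^\vee$-module — in particular the weights of $\BS(V^\lambda)$ as a $T^\vee$-representation match those of $V^\lambda$, so $\dim T^\vee=\mathrm{rk}\,G=\mathrm{rk}\,G'$ and $T^\vee$ is a maximal torus of $G'$ with $X^*(T^\vee)=\Lambda$. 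Thus $G'$ and $G^\vee$ have the same character lattice and the same (multiset of) weights on each irreducible; it remains to match the root datum. For this I would restrict to a simple coroot $\alpha_i^\vee$: the subcategory generated by $\IC$ of the corresponding subregular/minuscule stratum realizes, via the same machinery applied to the $\SL_2$ or $\PGL_2$ associated to $\alpha_i$, a copy of $\Rep(\SL_2)$ or $\Rep(\PGL_2)$ inside $\on{Perv}_{G_\CO}(\Gr_G)$; comparing which of these occurs (i.e. whether $\langle\alpha_i,\alpha_i^\vee\rangle$-type integrality holds) reads off the coroots of $G'$ and shows they are exactly the coroots of $G^\vee$, i.e. the roots of $G$. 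Since a connected reductive group is determined up to isomorphism by its root datum, $G'\cong G^\vee$, and the equivalence $\BS$ is constructed. The main obstacle is the rank-one computation: verifying that the reconstructed group has the correct coroots (not, say, an isogenous group) requires genuinely computing the convolution $\IC(\ol\Gr{}^{\omega_i}_G)\star\IC(\ol\Gr{}^{\omega_i}_G)$ on a minuscule or quasi-minuscule stratum and checking the decomposition matches Clebsch–Gordan for the intended $\SL_2/\PGL_2$ — this is where the dimension estimate of Lemma~\ref{estimate} does the real work, controlling which $\IC$ summands appear, and it is the step I expect to be most delicate to carry out carefully.
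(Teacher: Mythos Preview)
Your proposal is correct and follows essentially the same approach as the paper's (very brief) sketch: Tannakian formalism produces $G'$, semisimplicity gives reductivity, the tensor upgrade $\bigoplus_\nu\Phi_\nu$ embeds $T^\vee$ as a maximal torus, and Lemma~\ref{estimate}(a) is then invoked to pin down $G'\cong G^\vee$. Your rank-one $\SL_2/\PGL_2$ elaboration is one way to make that last step explicit, but the paper simply declares it ``easy'' from Lemma~\ref{estimate}(a) --- and indeed, knowing the full weight supports of all irreducibles already determines the root system without any convolution computation, so the obstacle you flag as most delicate is in fact already handled.
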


\section{Derived geometric Satake}\label{derived-satake}

In this section we extend the algebraic description of $\on{Perv}_{G_\CO}(\Gr_G)$ to
an algebraic description of the equivariant derived category
$D_{G_\CO\rtimes\BC^\times}(\Gr_G)$.

\subsection{Asymptotic Harish-Chandra bimodules}
\label{Harish}
First we develop the necessary algebraic machinery.
Let $U=U(\gvee)$ be the universal enveloping algebra, and let $U_\hbar$ be the
graded enveloping algebra, i.e.\ the graded $\BC[\hbar]$-algebra generated by
$\gvee$ with relations $xy-yx=\hbar[x,y]$ for $x,y\in\gvee$ (thus $U_\hbar$ is obtained
from $U$ by the Rees construction producing a graded algebra from the filtered one).
The adjoint action extends to the action of $G^\vee$ on $U_\hbar$.

We consider the category $\HC_\hbar$ of graded modules over
$U^2_\hbar:=U_\hbar\otimes_{\BC[\hbar]}U_\hbar\simeq U_\hbar\rtimes U$ equipped with an
action of $G^\vee$ (denoted $\beta\colon G^\vee\times M\to M$)
satisfying the following conditions:

\textup{(a)} The action $U^2_\hbar\otimes M\to M$ is $G^\vee$-equivariant;

\textup{(b)} for any $x\in\gvee$, the action of $x\otimes1+1\otimes x\in U^2_\hbar$
coincides with the action of $\hbar\cdot d\beta(x)$;

\textup{(c)} the module $M$ is finitely generated as a $U_\hbar\otimes1$-module
(equivalently, as a $1\otimes U_\hbar$-module).

The restriction from $U^2_\hbar$ to $U_\hbar\otimes1$ gives an equivalence of $\HC_\hbar$
with the category of $G^\vee$-modules equipped with a $G^\vee$-equivariant
$U_\hbar$-action.

\subsubsection{Example: free Harish-Chandra bimodules}
\label{free}
Let $V\in\on{Rep}(G^\vee)$. We define a free Harish-Chandra bimodule
$\on{Fr}(V)=U_\hbar\otimes V$ with the $G^\vee$-action
$g(y\otimes v)=\on{Ad}_g(y)\otimes g(v)$ and the $U^2_\hbar$-action
$(x\otimes u)(y\otimes v)=xyu\otimes v+\hbar xy\otimes u(v)$,
where $x,u\in\gvee\subset U_\hbar$. Thus, $\on{Fr}(V)$ is the induction of $V$
(the left adjoint functor to the restriction
$\on{res}\colon \HC_\hbar\to\on{Rep}(G^\vee)$). This is a projective object of the
category $\HC_\hbar$. We will denote by $\HC_\hbar^{\on{fr}}$ the full subcategory of
$\HC_\hbar$ formed by all the free Harish-Chandra bimodules.

\subsection{Kostant-Whittaker reduction}
\label{Whittaker}
We also consider the subalgebra
$U^2_\hbar(\nvee_-)=U_\hbar(\nvee_-)\rtimes U(\nvee_-)\subset U^2_\hbar$. We fix a regular
character $\psi\colon U_\hbar(\nvee_-)\to\BC[\hbar]$ taking value 1 at each generator
$f_i$. We extend it to a character
$\psi^{(2)}\colon U^2_\hbar(\nvee_-)=U_\hbar(\nvee_-)\rtimes U(\nvee_-)\to\BC[\hbar]$ trivial
on the second factor (its restriction to $1\otimes U_\hbar(\nvee_-)$ equals $-\psi$).

\begin{defn}
    \label{kw reduction}
For $M\in\HC_\hbar$ we set
$\varkappa_\hbar(M):=(M\overset{L}{\otimes}_{1\otimes U_\hbar(\nvee_-)}(-\psi))^{N^\vee_-}$
{\em (Kostant-Whittaker reduction)}.
It is equipped with an action of the Harish-Chandra center
$Z(U_\hbar)\otimes_{\BC[\hbar]}Z(U_\hbar)=\BC[\ft/W\times\ft/W\times\BA^1]$. Furthermore,
$\varkappa_\hbar(M)$ is graded by the action of the element $h$ from a principal
$\fs\fl_2=\langle e,h,f\rangle$-triple whose $e$ corresponds to $\psi$ under the Killing
form. All in all, $\varkappa_\hbar(M)$ is a $\BC^\times$-equivariant coherent sheaf on
$\ft/W\times\ft/W\times\BA^1$ (with respect to the natural $\BC^\times$-action on $\ft/W$).
\end{defn}

\subsubsection{Example: differential operators and quantum Toda lattice}
\label{Toda}
The ring of $\hbar$-differential operators on
$G^\vee,\ \CalD_\hbar(G^\vee)=U_\hbar\ltimes\BC[G^\vee]$ is an object of the Ind-completion
$\on{Ind}\HC_\hbar$. It carries one more commuting structure of a Harish-Chandra bimodule
(where $U_\hbar$ acts by the right-invariant $\hbar$-differential operators). We define
$\CK:=\varkappa_\hbar(\CalD_\hbar(G^\vee))$, an algebra in the category $\on{Ind}\HC_\hbar$.
It corepresents the functor $\Hom(M,\CK)=\varkappa_\hbar(DM)$ where
$DM=\Hom_{U_\hbar}(M,U_\hbar)$ is a duality on $\HC_\hbar$. Here $\Hom_{U_\hbar}$ is taken
with respect to the {\em right} action of $U_\hbar$, while the {\em left} actions of
$U_\hbar$ on $M$ and on itself are used to construct the left and right actions of
$U_\hbar$ on $\Hom_{U_\hbar}(M,U_\hbar)$. Finally, we define an associative algebra
$\CT_\hbar:=\varkappa_\hbar(\CK)
=\varkappa^{\on{right}}_\hbar\varkappa^{\on{left}}_\hbar\CalD_\hbar(G^\vee)$,
the quantum open Toda lattice.

\subsection{Deformation to the normal cone}
\label{deformation}
A well known construction associates to a closed subvariety $Z\subset X$ the deformation
to the normal cone $\CN_ZX$ projecting to $X\times\BA^1$; all the nonzero fibers are
isomorphic to $X$, while the zero fiber is isomorphic to the normal cone $C_ZX$.
Recall that $\CN_ZX$ is defined as the relative spectrum of the subsheaf of algebras
$\CO_X[\hbar^{\pm1}]$, generated over $\CO_{X\times\BA^1}$ by the elements of the following
type: $f\hbar^{-1}$, where $f\in\CO_X,\ f|_Z=0$.

Now if $M$ is a Harish-Chandra bimodule free over $\BC[\hbar]$, then the action of
$\BC[\ft/W\times\ft/W\times\BA^1]$ on $\varkappa_\hbar(M)$ extends uniquely to the action
of the ring of functions $\BC[\CN_\Delta(\ft/W\times\ft/W)]$ on the deformation to the
normal cone of diagonal, since for $z\in ZU_\hbar,\ m\in M$, the difference of
the left and right actions $z^{(1)}m-z^{(2)}m$ is divisible by $\hbar$.
So we will consider $\varkappa_\hbar(M)$ as a $\BC^\times$-equivariant sheaf on
$\CN_\Delta(\ft/W\times\ft/W)$. Note that $\BC[\CN_\Delta(\ft/W\times\ft/W)]/\hbar=
\BC[C_\Delta(\ft/W\times\ft/W)]=\BC[T_{\ft/W}]=\BC[T_\Sigma]$. Here $\Sigma\subset(\gvee)^*$
is the Kostant slice (we identify $\gvee$ and $(\gvee)^*$ by the Killing form).
Recall the universal
centralizer $\fZ_{\gvee}^{\gvee}=\{(x\in\gvee,\ \xi\in\Sigma) : \on{ad}_x\xi=0\}$.

\begin{lem}
  Under the identification of $\gvee$ and $(\gvee)^*$, the universal centralizer
  $\fZ_{\gvee}^{\gvee}$ is canonically isomorphic to the cotangent bundle $T^*\Sigma$.
\end{lem}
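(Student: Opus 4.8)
The plan is to exhibit an explicit isomorphism between the universal centralizer $\fZ_{\gvee}^{\gvee}=\{(x,\xi)\in\gvee\times\Sigma : \on{ad}_x\xi=0\}$ and $T^*\Sigma$, using the fact that $\Sigma$ is an affine space sitting transversally to the regular coadjoint orbits. First I would recall the Kostant section: fixing a principal $\fsl_2$-triple $\langle e,h,f\rangle$ with $\psi$ corresponding to $e$, the slice is $\Sigma=f+\fz_{\gvee}(e)\subset(\gvee)^*$ (via the Killing form), it consists entirely of regular elements, and the adjoint action gives an isomorphism $G^\vee\times\Sigma\iso(\gvee)^*_{\reg}$ onto the regular locus. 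The centralizer $\fZ_{\gvee}^{\gvee}$ is then the restriction to $\Sigma$ of the group scheme of centralizers inside $\gvee$ (its Lie-algebra version), and since every $\xi\in\Sigma$ is regular, the fiber $\fz_{\gvee}(\xi)$ is abelian of dimension $r=\dim\Sigma=\on{rk}\gvee$.

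The core of the argument is a dimension count plus an identification of the tangent/cotangent directions. For $\xi\in\Sigma$, the Killing form identifies $\gvee\cong(\gvee)^*$ and under this identification $\fz_{\gvee}(\xi)=\on{ad}_{\gvee}(\xi)^{\perp}$, i.e.\ the annihilator of the tangent space $T_\xi(G^\vee\cdot\xi)=\on{ad}_{\gvee}(\xi)$ to the coadjoint orbit through $\xi$. Because $\Sigma$ is transversal to this orbit and of complementary dimension, we get a canonical direct sum $(\gvee)^*=\on{ad}_{\gvee}(\xi)\oplus T_\xi\Sigma$, and dualizing (again via Killing) the annihilator $\on{ad}_{\gvee}(\xi)^{\perp}$ is canonically identified with $(T_\xi\Sigma)^*$. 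Running this over all $\xi\in\Sigma$ gives a vector bundle isomorphism $\fZ_{\gvee}^{\gvee}\to T^*\Sigma$ over $\Sigma$. One then checks this is algebraic (the decomposition varies algebraically because $\on{ad}(\xi)$ has constant rank on $\Sigma$) and that it matches the symplectic/Poisson structures, which is where the identification with the zero-fiber $\BC[\CN_\Delta(\ft/W\times\ft/W)]/\hbar=\BC[T_{\ft/W}]=\BC[T_\Sigma]$ from the preceding paragraph enters: the isomorphism $\ft/W\cong\Sigma$ (Kostant) intertwines $T_{\ft/W}$, i.e.\ the tangent bundle, with the universal centralizer, and by duality via the Killing form the tangent bundle is identified with the cotangent bundle.

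The main obstacle, I expect, is not the pointwise linear algebra but making the construction genuinely \emph{canonical} and compatible with the Poisson structure coming from the deformation to the normal cone. Concretely, one must verify that the natural Poisson bracket on $\BC[\fZ_{\gvee}^{\gvee}]$ — inherited from the bracket on $\BC[(\gvee)^*]=\on{Sym}(\gvee)$ via the centralizer condition, equivalently from $\varkappa_\hbar(\CalD_\hbar(G^\vee))$ modulo $\hbar$ — coincides under the above isomorphism with the canonical symplectic Poisson bracket on $T^*\Sigma$. This is essentially Kostant's theorem that the regular coadjoint quotient realizes $T^*\Sigma$ (or the well-known statement that the universal centralizer over the Kostant slice is symplectic), and the cleanest route is to cite it: the map $G^\vee\times\Sigma\to(\gvee)^*_{\reg}$ exhibits $(\gvee)^*_{\reg}$ as a twisted product whose Hamiltonian reduction by $G^\vee$ is $T^*\Sigma$, and $\fZ_{\gvee}^{\gvee}$ is exactly the fiber of $(\gvee)^*_{\reg}\to\gvee/\!\!/G^\vee\cong\Sigma$ over the "diagonal", which is the standard presentation of the universal centralizer as $T^*\Sigma$. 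I would therefore structure the proof as: (1) recall the Kostant section and regularity; (2) build the bundle map fiberwise via the Killing-form duality between $\on{ad}_{\gvee}(\xi)^{\perp}$ and $(T_\xi\Sigma)^*$; (3) invoke constancy of rank to conclude it is an algebraic isomorphism; (4) match Poisson structures by identifying both sides with the reduction of $(\gvee)^*_{\reg}$, which also pins down the identification with $\BC[T_\Sigma]$ appearing just before the lemma.
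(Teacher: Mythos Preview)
Your core argument is essentially the paper's: identify the centralizer $\fz_\xi=\ker(a_\xi)$ with the Killing-form annihilator of $\on{im}(a_\xi)=T_\xi(G^\vee\cdot\xi)$, hence with the dual of $(\gvee)^*/\on{im}(a_\xi)\cong T_\xi\Sigma$. The paper packages this a bit more cleanly by working on all of $(\gvee)^*_{\on{reg}}$ via the quotient projection $\on{pr}\colon(\gvee)^*_{\on{reg}}\to\ft/W=\Sigma$, showing the centralizer sheaf is $\on{pr}^*T^*\Sigma$ (fiber of $\on{pr}^*T^*\Sigma$ at $\eta$ is $(\on{coker}a_\eta)^*=\ker a_\eta^*=\ker a_\eta$), and then restricting to the slice; your transversal-complement phrasing is an equivalent pointwise version of the same linear algebra.

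Two small remarks. First, your statement that the adjoint action gives $G^\vee\times\Sigma\iso(\gvee)^*_{\reg}$ is false as written: the map is surjective but its fibers are exactly the (positive-dimensional) centralizers, so it is not an isomorphism. This does not damage the actual argument, which only needs transversality of $\Sigma$ to the orbits, but you should correct it. Second, the lemma as stated asserts only the canonical vector-bundle isomorphism; the paper's proof says nothing about Poisson structures, and none of your step (4) is needed here.
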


\begin{proof}
  The open subset of regular elements $(\gvee)^*_{\on{reg}}\subset(\gvee)^*$ carries the
  centralizer sheaf $\fz\subset\gvee\otimes\CO$ of abelian Lie algebras. We have
  $\fz=\on{pr}^*T^*\Sigma$ where
  $\on{pr}\colon (\gvee)^*_{\on{reg}}\to(\gvee)^*_{\on{reg}}/\on{Ad}_{G^\vee}=\ft/W=\Sigma$
  is the evident projection. Indeed, the fiber of $\on{pr}^*T^*\Sigma$ at a point
  $\eta\in(\gvee)^*$ is dual to the cokernel of the map $a_\eta\colon \gvee\to(\gvee)^*,\
  x\mapsto\on{ad}_x\eta$. In other words, this fiber is isomorphic to the kernel of the
  dual map which happens to coincide with $a_\eta$. The latter kernel is by definition
  nothing but the fiber $\fz_\eta$.
\end{proof}

\begin{lem}
  For $V\in\on{Rep}(G^\vee)$, the $\BC[T_\Sigma]$-module
  $\varkappa_\hbar(\on{Fr}(V))|_{\hbar=0}$ is isomorphic to $\BC[\Sigma]\otimes V$ as a
  $\fZ_{\gvee}^{\gvee}$-module.
\end{lem}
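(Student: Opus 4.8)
The plan is to compute $\varkappa_\hbar(\on{Fr}(V))$ by hand, flatly over $\BC[\hbar]$, and then to read off both its fibre at $\hbar=0$ and the action on that fibre of the ``momentum'' generators of $\BC[\CN_\Delta(\ft/W\times\ft/W)]$. Write $M:=\on{Fr}(V)=U_\hbar\otimes V$. First I would note that the Kostant--Whittaker reduction of $M$ is non-derived: by PBW, $U_\hbar$ is free as a right $U_\hbar(\nvee_-)$-module, hence so is $M$ for the action of $1\otimes U_\hbar(\nvee_-)$. Next, relation~(b) gives $1\otimes u=\hbar\,d\beta(u)-u\otimes1$ for $u\in\nvee_-$, so on $M$ this operator sends $y\otimes v$ to $-\,yu\otimes v+\hbar\,y\otimes\rho_V(u)v$; imposing the Whittaker character $-\psi$ then identifies right multiplication by $u\in\nvee_-$ on the $U_\hbar$-factor with the operator $\hbar\,\rho_V(u)+\psi(u)$ on the $V$-factor. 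Thus
\[
M\otimes_{1\otimes U_\hbar(\nvee_-)}(-\psi)\ \simeq\ U_\hbar\otimes_{U_\hbar(\nvee_-)}V^\hbar_\psi ,
\]
where $V^\hbar_\psi$ is $V[\hbar]$ with $\nvee_-$ acting by $u\mapsto\hbar\,\rho_V(u)+\psi(u)$ (this extends to $U_\hbar(\nvee_-)$ since $\psi$ kills $[\nvee_-,\nvee_-]$). By PBW this is a free $\BC[\hbar]$-module carrying the residual algebraic $N^\vee_-$-action descended from the diagonal $G^\vee$-action on $M$; modulo $\hbar$ it is $\BC[(\gvee)^*]\otimes V$ with $\nvee_-\subset\Sym(\gvee)$ acting by the scalars $\psi$, i.e.\ $\BC[\mu_-^{-1}(\psi)]\otimes V$, where $\mu_-\colon(\gvee)^*\to(\nvee_-)^*$ is the restriction map.

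Second, I would identify the underlying $\BC[\Sigma]$-module using Kostant's theorem: the coadjoint $N^\vee_-$-action on $\mu_-^{-1}(\psi)$ is free, its geometric quotient is the Kostant slice $\Sigma\cong\ft/W$, and the slice splits $\mu_-^{-1}(\psi)\cong N^\vee_-\times\Sigma$ equivariantly. Hence $(-)^{N^\vee_-}$ is exact here and commutes with $-\otimes_{\BC[\hbar]}\BC$ (the same freeness persists in the $\hbar$-family, so $\varkappa_\hbar(M)$ is in fact $\BC[\CN_\Delta(\ft/W\times\ft/W)]$-flat and its $\hbar=0$ fibre is computed naively), and
\[
\varkappa_\hbar(M)|_{\hbar=0}=\big(\BC[\mu_-^{-1}(\psi)]\otimes V\big)^{N^\vee_-}=\BC[\Sigma]\otimes\big(\BC[N^\vee_-]\otimes V\big)^{N^\vee_-}=\BC[\Sigma]\otimes V
\]
as $\BC[\Sigma]$-modules, the last step being evaluation at $e\in N^\vee_-$. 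It then remains to match the remaining generators of $\BC[\CN_\Delta(\ft/W\times\ft/W)]/\hbar=\BC[\fZ_\gvee^\gvee]$ — the classes of $(z^{(1)}-z^{(2)})/\hbar$ for $z\in Z(U_\hbar)=\BC[\ft/W]$, which fibrewise over $\Sigma$ exhaust the linear functions on $\fz$ — with the natural action on $\BC[\Sigma]\otimes V$ in which the bundle of abelian centralizer Lie algebras $\fz$ acts on the factor $V$ through $\rho_V$.

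Third, for the momentum directions: on $M$ the operator $z^{(1)}$ is left multiplication by the central element $z\in U_\hbar$ on the $U_\hbar$-factor, while $z^{(2)}$ is obtained by substituting $1\otimes x=\hbar\,d\beta(x)-x\otimes1$ into a polynomial expression for $z$. Its $\hbar^0$-term is right multiplication by the image $\tau(z)$ of $z$ under the principal anti-automorphism of $U_\hbar$; since $\tau$ fixes $Z(U_\hbar)$ (a short filtration argument) and $z$ is central, this $\hbar^0$-term equals $L_z=R_z$, which is exactly why $z^{(1)}-z^{(2)}$ is divisible by $\hbar$. The $\hbar^1$-term of $z^{(2)}$ is a sum, over the generator-slots of $z$, of a right-multiplication operator on the $U_\hbar$-factor composed with $\rho_V(x_i)$ on the $V$-factor; after the reduction of the first two steps (where right multiplication by $w\in U_\hbar$ becomes multiplication by the symbol of $w$ restricted to $\Sigma$) it becomes, at $\xi\in\Sigma$, the operator $\rho_V(dz|_\xi)$ on $V$, where $dz|_\xi\in\gvee$ is the differential at $\xi$ of the (invariant) symbol of $z$ on $(\gvee)^*$. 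Since that symbol is $G^\vee$-invariant, $dz|_\xi$ lies in the centralizer $\fz_\xi=\ker a_\xi$; this is precisely the identification $\fz=\on{pr}^*T^*\Sigma$ of the previous Lemma, under which $dz|_\xi$ is the covector $dz|_\xi\in T^*_\xi\Sigma$. Therefore the $\BC[\fZ_\gvee^\gvee]$-module structure produced on $\BC[\Sigma]\otimes V$ is the asserted one.

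The hard part will be this third step: isolating the $\hbar$-linear term of the second action $z^{(2)}$ on the free Harish--Chandra bimodule (a non-commutative Taylor expansion) and recognizing it, via the Harish-Chandra--Chevalley description of $Z(U_\hbar)$ and the identification $\fz=\on{pr}^*T^*\Sigma$ from the previous Lemma, as the centralizer bundle $\fz$ acting on $V$ through $\rho_V$. A secondary point, used in the first two steps, is that passing to $N^\vee_-$-invariants commutes with the specialization $\hbar=0$; this is guaranteed by Kostant's freeness of the coadjoint $N^\vee_-$-action on $\mu_-^{-1}(\psi)$, applied now in the $\hbar$-family.
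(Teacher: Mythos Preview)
Your proposal is correct and follows essentially the same strategy as the paper: the heart of both arguments is the Leibniz-rule computation showing that $\dfrac{z^{(1)}-z^{(2)}}{\hbar}\big|_{\hbar=0}$ acts on $\BC[(\gvee)^*]\otimes V$ as the differential $\sigma_z=dz\in\BC[(\gvee)^*]\otimes\gvee$, hence as the centralizer field through $\rho_V$. The only real difference is one of packaging. The paper carries out this computation directly on the unreduced module $(U_\hbar\otimes V)|_{\hbar=0}=\BC[(\gvee)^*]\otimes V$ and leaves the identification of $\varkappa_\hbar(\on{Fr}(V))|_{\hbar=0}$ with $\BC[\Sigma]\otimes V$ implicit (it follows immediately from Kostant's slice, as you spell out). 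You instead perform the Whittaker reduction first---your steps one and two---and then run the same $\hbar$-expansion on the reduced module. Your version is more self-contained about why specialization at $\hbar=0$ commutes with taking $N^\vee_-$-invariants, and about why the $\BC[\Sigma]$-module underlying the answer is free on $V$; the paper's version is shorter because it computes the $\fZ^{\gvee}_{\gvee}$-action before restricting to $\Sigma$, where the formula $\sum c_{\ul{i}}\,x_{i_1}\cdots\hat{x}_{i_p}\cdots x_{i_r}\otimes x_{i_p}$ is visibly $dz$. Either order gives the same result.
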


\begin{proof}
  Let $\on{Poly}((\gvee)^*,\gvee)^{G^\vee}$ be the space of $G^\vee$-invariant polynomial
  morphisms. It is a vector bundle over $\on{Spec}\BC[(\gvee)^*]^{G^\vee}=\Sigma$.
  If $P\in\BC[(\gvee)^*]^{G^\vee}$, then the differential $dP$ is a section of this bundle.
  If $z\in ZU(\gvee)=\BC[(\gvee)^*]^{G^\vee}$, we denote $dz$ by $\sigma_z$.
  If $\{z_i\}$ is a set of generators of $ZU(\gvee)$, then $\{\sigma_{z_i}\}$ forms a
  basis of $\fZ_{\gvee}^{\gvee}$ as a vector bundle over $\Sigma$, and hence identifies
  it with $T^*\Sigma$. Let $z^{(1)},z^{(2)}$ stand for the left and right actions of $z$
  on $\on{Fr}(V)$. We have to check that the action of
  $\frac{z^{(1)}-z^{(2)}}{\hbar}|_{\hbar=0}$ on
  $(U_\hbar\otimes V)|_{\hbar=0}=\BC[(\gvee)^*]\otimes V$ coincides with the action of
  $\sigma_z\in\BC[(\gvee)^*]\otimes\gvee$. But if
  $v\in V,\ z=\sum_{\ul{i}}c_{\ul{i}}x_{i_1}\cdots x_{i_r}\ (x_{i_p}\in\gvee)$, and
  $\tilde{y}\in U_\hbar$ is a lift of $y\in\BC[(\gvee)^*]=U_\hbar|_{\hbar=0}$,
  then $\frac{z(\tilde{y}\otimes v)-(\tilde{y}\otimes v)z}{\hbar}|_{\hbar=0}=
  \sum_{i_p\in\ul{i}}c_{\ul{i}}x_{i_1}\cdots\hat{x}_{i_p}\cdots x_{i_r}y\otimes x_{i_p}(v)=
  \sigma_z(y\otimes v)$.
\end{proof}

\subsection{Quasiclassical limit of the Kostant-Whittaker reduction}

We define a functor $\varkappa\colon \Coh^{G^\vee\times\BC^\times}(\gvee)^*\to
\Coh^{\BC^\times}(T\Sigma)$ as follows. For a $G^\vee$-equivariant coherent sheaf $\CF$
on $(\gvee)^*$, the restriction $\CF|_{(\gvee)^*_{\on{reg}}}$ is equipped with an action
of the centralizer sheaf $\fz$. Hence, this restriction gives rise to a coherent
sheaf on $\on{pr}^*T\Sigma$. Restricting it to the Kostant slice $\Sigma$, we
obtain a coherent sheaf $\bar\varkappa\CF$ on $T\Sigma$. Equivalently, the latter
sheaf can be described as $(\CF|_\Upsilon)^{N^\vee_-}$, where
$\Upsilon=e+\bvee_-\subset\gvee\simeq(\gvee)^*$. This construction is
$\BC^\times$-equivariant, and gives rise to the desired functor $\varkappa$.

We define $\Coh^{G^\vee\times\BC^\times}_{\on{fr}}(\gvee)^*\subset
\Coh^{G^\vee\times\BC^\times}(\gvee)^*$
as the full subcategory formed by the sheaves $V\otimes\CO_{(\gvee)^*}$ for
$V\in\on{Rep}(G^\vee)$.

\begin{lem}
  \textup{(a)} The functors $\varkappa,\varkappa_\hbar$ are exact;

  \textup{(b)} $\varkappa|_{\Coh^{G^\vee\times\BC^\times}_{\on{fr}}(\gvee)^*},\
  \varkappa_\hbar|_{\HC_\hbar^{\on{fr}}}$ are fully faithful.
\end{lem}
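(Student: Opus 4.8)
The plan is to handle the two statements separately, and to reduce the $\varkappa_\hbar$-claims to the $\varkappa$-claims by the usual Rees/deformation argument, so that the real content lies at $\hbar=0$. For exactness, the key point is that $\varkappa$ is, up to the $\BC^\times$-equivariance, nothing but restriction of a $G^\vee$-equivariant sheaf on $(\gvee)^*$ to the affine Kostant-type slice $\Upsilon=e+\bvee_-$ followed by taking $N^\vee_-$-invariants. First I would recall that $\Upsilon$ is a single free $N^\vee_-$-orbit inside $(\gvee)^*_{\on{reg}}$ intersected with... more precisely, the action map $N^\vee_-\times\Sigma\to\Upsilon$ is an isomorphism (this is the standard Kostant transversality statement, applied to $\gvee$), so that restriction to $\Upsilon$ lands in the category of $N^\vee_-$-equivariant sheaves on a free $N^\vee_-$-variety, and the functor $(-)^{N^\vee_-}$ there is an equivalence onto $\Coh(\Sigma)$ — in particular exact. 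Since restriction of sheaves to the closed (affine, smooth) subvariety $\Upsilon\subset(\gvee)^*$ is itself exact (pullback along a regular closed embedding applied to sheaves that happen to be $N^\vee_-$-flat along $\Upsilon$ — and for $G^\vee$-equivariant sheaves this flatness comes for free from homogeneity), the composite $\varkappa$ is exact. The $\hbar$-version $\varkappa_\hbar$ is treated identically: $\varkappa_\hbar(M)=(M\otimes^L_{1\otimes U_\hbar(\nvee_-)}(-\psi))^{N^\vee_-}$, the derived tensor product is in fact underived because, by the PBW filtration, $M$ (being in $\HC_\hbar$, hence finitely generated and, for the subcategory of interest, flat) is free over $1\otimes U_\hbar(\nvee_-)$; then $(-)^{N^\vee_-}$ is exact on the resulting $N^\vee_-$-equivariant modules by the same freeness-of-the-$N^\vee_-$-action argument, now in the $\hbar$-deformed setting.

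For part (b), I would argue full faithfulness on the free subcategories by direct computation of the relevant $\Hom$-spaces, using Lemma~\ref{free}(the description of free Harish-Chandra bimodules) together with the two lemmas just proved about $\varkappa_\hbar(\on{Fr}(V))|_{\hbar=0}$. Concretely: for $V,V'\in\on{Rep}(G^\vee)$ one has $\Hom_{\HC_\hbar}(\on{Fr}(V),\on{Fr}(V'))=\Hom_{G^\vee}(V,\on{Fr}(V'))=\Hom_{G^\vee}(V,U_\hbar\otimes V')=(U_\hbar\otimes\Hom_\BC(V,V'))^{G^\vee}$ since $\on{Fr}$ is left adjoint to $\on{res}$. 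On the other side, $\varkappa_\hbar(\on{Fr}(V))$ is a free module over $\BC[\CN_\Delta(\ft/W\times\ft/W)]$ of rank $\dim V$ (this is exactly the flatness content of the Kostant-Whittaker reduction of a free bimodule, visible already from the fact that $\varkappa_\hbar(\on{Fr}(V))|_{\hbar=0}\cong\BC[\Sigma]\otimes V$ is free over $\BC[T_\Sigma]$ by the second Lemma), so $\Hom$ in $\Coh^{\BC^\times}(\CN_\Delta(\ft/W\times\ft/W))$ between $\varkappa_\hbar(\on{Fr}(V))$ and $\varkappa_\hbar(\on{Fr}(V'))$ is the space of $\BC^\times$-equivariant maps between these free modules, i.e.\ $\big(\BC[\CN_\Delta(\ft/W\times\ft/W)]\otimes\Hom_\BC(V,V')\big)^{\BC^\times}$ with the appropriate grading. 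The claim is that $\varkappa_\hbar$ induces an isomorphism between these two $\Hom$-spaces. I would prove this by first checking it modulo $\hbar$ — where it becomes the statement that $\bar\varkappa$ is fully faithful on $\Coh_{\on{fr}}$, which in turn reduces, via $\bar\varkappa(V\otimes\CO)=\BC[\Sigma]\otimes V$ as a module over $\BC[\Sigma]\otimes ZU(\gvee)$-ish structure and the identification $\fZ_{\gvee}^{\gvee}\cong T^*\Sigma$ of the first Lemma, to the evident fact that $G^\vee$-equivariant polynomial maps $(\gvee)^*\to\End(V)$ are determined by their restriction to $\Upsilon$ (Kostant transversality again, now for sections of the associated bundle $\End(V)\times(\gvee)^*$) — and then lifting from $\hbar=0$ to all $\hbar$ by $\BC^\times$-equivariance and flatness over $\BC[\hbar]$: a $\BC^\times$-equivariant, $\BC[\hbar]$-linear map between graded $\BC[\hbar]$-flat modules that is an isomorphism modulo $\hbar$ is an isomorphism, because in each fixed grading both sides are finite-dimensional and the graded Nakayama lemma applies.

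The main obstacle, I expect, is pinning down \emph{flatness} at the two places where I want to replace a derived functor by an underived one and a $\Hom$-of-modules by a $\Hom$-of-free-modules: namely (i) that objects of $\HC_\hbar^{\on{fr}}$ are free (not just flat) over $1\otimes U_\hbar(\nvee_-)$, so that the derived tensor product defining $\varkappa_\hbar$ is concentrated in degree zero, and (ii) that $\varkappa_\hbar(\on{Fr}(V))$ is a free $\BC[\CN_\Delta(\ft/W\times\ft/W)]$-module. For (i) one uses that $\on{Fr}(V)=U_\hbar\otimes V$ and a PBW basis of $U_\hbar$ adapted to the triangular decomposition $\gvee=\nvee_-\oplus\tvee\oplus\nvee$, exhibiting $U_\hbar\cong U_\hbar(\nvee_-)\otimes(\text{rest})$ as left $U_\hbar(\nvee_-)$-modules; but one must be careful that the \emph{right} $U_\hbar(\nvee_-)$-action (which is the one appearing in the definition of $\varkappa_\hbar$, twisted by $-\psi$) is also free, which follows by a symmetric PBW argument. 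For (ii), the cleanest route is to deduce freeness over $\BC[\CN_\Delta(\ft/W\times\ft/W)]$ from freeness of $\varkappa_\hbar(\on{Fr}(V))|_{\hbar=0}$ over $\BC[T_\Sigma]$ (the second Lemma above) together with $\BC[\hbar]$-flatness and a graded-Nakayama/local-criterion-of-flatness argument. Once these flatness points are secured, everything else is a comparison of explicitly described $\Hom$-spaces and the $\hbar$-deformation bookkeeping, which is routine.
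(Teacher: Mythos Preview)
Your treatment of part~(a) and of the reduction from $\varkappa_\hbar$ to $\varkappa$ via graded Nakayama is essentially the paper's argument: restriction to $\Upsilon$ is exact on $G^\vee$-equivariant sheaves (by transversality of the Kostant slice to the orbits), and $N^\vee_-$-invariants is exact because $N^\vee_-$ acts freely on $\Upsilon$ with quotient $\Sigma$.

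Your argument for part~(b), however, rests on a false claim. You assert that $\varkappa_\hbar(\on{Fr}(V))$ is free over $\BC[\CN_\Delta(\ft/W\times\ft/W)]$ of rank $\dim V$, and that at $\hbar=0$ this is ``visible'' from $\varkappa_\hbar(\on{Fr}(V))|_{\hbar=0}\cong\BC[\Sigma]\otimes V$. But $\BC[\Sigma]\otimes V$ is \emph{not} free over $\BC[T\Sigma]$: it is free only over $\BC[\Sigma]$, while the extra tangent directions act through the (finite-dimensional) centralizer representation on $V$, so by Cayley--Hamilton the module is torsion along those directions. Consequently your computation of the target $\Hom$-space as $(\BC[\CN_\Delta]\otimes\Hom_\BC(V,V'))^{\BC^\times}$ is wrong --- this would be far too large --- and the comparison of $\Hom$-spaces collapses. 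Relatedly, your ``evident fact'' that $G^\vee$-equivariant maps $(\gvee)^*\to\End(V)$ are determined by their restriction to $\Upsilon$ yields only injectivity of $\varkappa$ on $\Hom$'s; surjectivity is never addressed.

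The paper's argument for~(b) is different and short. A morphism in the target category is a section over $\Sigma$ of the sheaf whose fiber at $\xi$ is $\Hom_{\fz_\xi}(V,V')$; the question is whether every such section comes from a $G^\vee$-equivariant morphism over all of $(\gvee)^*$. Since $(\gvee)^*\setminus(\gvee)^*_{\on{reg}}$ has codimension at least~$2$, restriction to the regular locus is an isomorphism on sections of the vector bundle $\Hom(V,V')\otimes\CO$; and since the centralizer of any regular element is connected, equivariance for the Lie algebra $\fz_\xi$ is the same as equivariance for the group stabilizer, so descent to $\Sigma=(\gvee)^*_{\on{reg}}/G^\vee$ identifies the two $\Hom$-spaces. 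These two geometric facts --- codimension~$\geq 2$ of the irregular locus and connectedness of regular centralizers --- are exactly what is missing from your approach.
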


\begin{proof}
  The statements about $\varkappa_\hbar$ follow from the ones for $\varkappa$
  by the graded Nakayama Lemma. To prove (a), note that the functor
  $\CF\mapsto\CF|_\Upsilon,\ \Coh^{G^\vee}(\gvee)^*\to\Coh^{N^\vee_-}(\Upsilon)$ is exact.
  Moreover, $N^\vee_-$ acts
  freely on $\Upsilon$, and $\Upsilon/N^\vee_-=\Sigma$. It follows that the functor
  $\CG\mapsto\CG^{N^\vee_-}$ is exact on $\Coh^{N^\vee_-}(\Upsilon)$. Now (b) follows
  since the codimension in $(\gvee)^*$ of the complement
  $(\gvee)^*\setminus(\gvee)^*_{\on{reg}}$ is at least 2, and the centralizer of
  a general regular element is connected.
\end{proof}

\subsection{Equivariant cohomology of the affine Grassmannian}
Now we turn to the topological machinery. We have an evident homomorphism
$\on{pr}^*\colon\BC[\hbar]=H^\bullet_{\BC^\times}(\on{pt})\to
H^\bullet_{G_\CO\rtimes\BC^\times}(\Gr_G)$. Also, viewing
$H^\bullet_{G_\CO\rtimes\BC^\times}(\Gr_G)$ as
$H^\bullet_{\BC^\times}(G_\CO\backslash G_\CK/G_\CO)$, we obtain two homomorphisms
$\on{pr}_1^*,\on{pr}_2^*\colon \BC[\Sigma]=\BC[\ft/W]=
H^\bullet_{G_\CO}(\on{pt})\rightrightarrows H^\bullet_{G_\CO\rtimes\BC^\times}(\Gr_G)$.
Let us assume for simplicity that $G$ is simply connected. Recall the deformation
to the normal cone of diagonal in $\ft/W\times\ft/W$, see~Section~\ref{deformation}.

\begin{prop}
  \label{cohomol}
  There is a natural isomorphism $\alpha\colon\BC[\CN_\Delta(\ft/W\times\ft/W)]\iso
  H^\bullet_{G_\CO\rtimes\BC^\times}(\Gr_G)$ compatible with the above
  $\on{pr}_1^*,\on{pr}_2^*,\on{pr}^*$.
\end{prop}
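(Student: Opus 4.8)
The plan is to compute $H^\bullet_{G_\CO\rtimes\BC^\times}(\Gr_G)$ by a localization-type argument with respect to the torus $T$, matching it against the explicit description of $\BC[\CN_\Delta(\ft/W\times\ft/W)]$. First I would recall that for $G$ simply connected $\ft/W=\Spec\BC[\ft]^W$ is an affine space (Chevalley), so $\ft/W\times\ft/W$ is smooth and its deformation to the normal cone of the diagonal is flat over $\BA^1$ with generic fiber $\ft/W\times\ft/W$ and special fiber $T_{\ft/W}$. Concretely $\BC[\CN_\Delta(\ft/W\times\ft/W)]$ is generated over $\BC[\ft/W\times\ft/W\times\BA^1]$ by the elements $(P^{(1)}-P^{(2)})\hbar^{-1}$ for $P\in\BC[\ft]^W$; that is, it is the Rees algebra of $\BC[\ft/W\times\ft/W]$ with respect to the filtration by powers of the diagonal ideal. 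So the right-hand side is completely transparent, and everything hinges on identifying the cohomology with this Rees algebra.

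On the topological side, the key input is the computation of $H^\bullet_{G_\CO}(\Gr_G)$ from the work of Bezrukavnikov--Finkelberg (geometric Satake spectral description, following Ginzburg): $H^\bullet_{G_\CO}(\Gr_G)\cong\BC[\fZ^{\gvee}_{\gvee}]=\BC[T^*\Sigma]=\BC[T_{\ft/W}]$, with the two copies of $\BC[\ft/W]=H^\bullet_{G_\CO}(\pt)$ coming from the two natural module structures (via $\Gr_G\to\pt$ on the left and via the "other" projection, i.e. via $B\Gamma$ of the dual side / the Kostant slice). Adding the loop rotation $\BC^\times$ deforms this: $H^\bullet_{G_\CO\rtimes\BC^\times}(\Gr_G)$ is a flat $\BC[\hbar]$-algebra whose specialization at $\hbar=0$ is $H^\bullet_{G_\CO}(\Gr_G)=\BC[T_{\ft/W}]$. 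The strategy is then: (i) show that $H^\bullet_{G_\CO\rtimes\BC^\times}(\Gr_G)$ is generated over $\BC[\hbar]$ by the images of $\on{pr}_1^*,\on{pr}_2^*$ together with $\hbar$; (ii) show that for each $W$-invariant $P$ the two lifts $\on{pr}_1^*P,\on{pr}_2^*P$ differ by a multiple of $\hbar$, producing a canonical element $(\on{pr}_1^*P-\on{pr}_2^*P)/\hbar$; (iii) match these generators and relations with the Rees-algebra presentation, and conclude by a flatness/degeneration argument. For step (ii) the divisibility of $z^{(1)}-z^{(2)}$ by $\hbar$ is exactly the phenomenon already used in Section~\ref{deformation} on the algebraic side; topologically it reflects that the two maps $\on{pr}_i^*$ agree modulo $\hbar$ because the $T$-fixed points of $\Gr_G$ are rigid and the relevant Euler classes coincide there after setting $\hbar=0$.

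A clean way to organize all of this is via the equivariant cohomology of the "universal" family: realize $\Gr_G$ inside $\Gr_{G,\BA^1}$ (the Beilinson--Drinfeld Grassmannian over $\BA^1$, as in the Fusion subsection), on which loop rotation $\BC^\times$ acts by scaling the curve. One shows $H^\bullet_{G_\CO\rtimes\BC^\times}(\Gr_G)$ is the algebra of functions on the total space of the family over $\BA^1=\Spec\BC[\hbar]$ whose generic fiber is $\ft/W\times\ft/W$ (the two points colliding on $\BA^1$) and whose special fiber is $T_{\ft/W}$ (the points coinciding, giving the first-order neighborhood) — this family is precisely $\CN_\Delta(\ft/W\times\ft/W)$. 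Compatibility with $\on{pr}^*,\on{pr}_1^*,\on{pr}_2^*$ is then built in by construction.

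The main obstacle is step (i) together with the identification at $\hbar=0$: proving that the equivariant cohomology is \emph{generated} by these classes and has \emph{no extra relations}, i.e. that it is exactly the Rees algebra and not merely a subalgebra or quotient. This requires the spectral description of $H^\bullet_{G_\CO}(\Gr_G)$ as $\BC[T_{\ft/W}]$ (which itself uses the geometric Satake equivalence of Section~\ref{satake} and the commutative-algebra Lemmas above on the Kostant slice and the universal centralizer), plus a degeneration argument: $H^\bullet_{G_\CO\rtimes\BC^\times}(\Gr_G)$ is free over $\BC[\hbar]$ (equivariant formality), its fiber at $\hbar=0$ is $\BC[T_{\ft/W}]=\BC[\CN_\Delta]/\hbar$, and the natural map from $\BC[\CN_\Delta(\ft/W\times\ft/W)]$ is an isomorphism mod $\hbar$; by graded Nakayama (the same device used in the proof of the Lemma on $\varkappa,\varkappa_\hbar$) it is an isomorphism. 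Everything else — well-definedness of $\alpha$, $\BC^\times$-equivariance, compatibility with the three maps — is then routine bookkeeping.
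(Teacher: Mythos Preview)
Your strategy differs from the paper's and carries a real gap. The paper does not assume the $\hbar=0$ identification $H^\bullet_{G_\CO}(\Gr_G)\cong\BC[T_{\ft/W}]$ as input; it proves the proposition from scratch. After constructing $\alpha$ by the same factorization you describe (since $\on{pr}_1^*=\on{pr}_2^*$ modulo $\hbar$), the paper proves \emph{injectivity} by $T$-fixed-point localization: after base-changing to $\on{Frac}(\BC[\ft\times\BA^1])$, the restriction to a fixed point $\lambda\in\Gr_G^T$ is identified (via a computation with the affine flag variety) with the structure sheaf of the graph $\Gamma_\lambda=\{(x_1,x_2,c):x_1=x_2+c\lambda\}\subset\ft\times\ft\times\BA^1$, and since $\bigcup_{\lambda\in X_*(T)}\Gamma_\lambda$ is Zariski dense, $\alpha_{\on{loc}}$ (hence $\alpha$) is injective. \emph{Surjectivity} is then a direct graded-dimension comparison: because $\ft/W=\Sigma$ is an affine space one has $\CN_\Delta(\Sigma\times\Sigma)\simeq\Sigma\times\Sigma\times\BA^1$, a polynomial ring whose Hilbert series matches that of $H^\bullet_{G_\CO\rtimes\BC^\times}(\Gr_G)$ since $H^\bullet(\Gr_G)$ is polynomial on generators of degrees $2(m_i+1)$.

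Your Nakayama endgame is clean, but your step (ii) is where the content hides and you do not supply it: you must show that $\alpha|_{\hbar=0}$ \emph{is} the Ginzburg/Bezrukavnikov--Finkelberg isomorphism, i.e.\ that the classes $(\on{pr}_1^*P-\on{pr}_2^*P)/\hbar$ reduce modulo $\hbar$ to the fiber coordinates of $T_{\ft/W}$. That identification is not formal, and in these notes (and in [befi]) the statement $H^\bullet_{G_\CO}(\Gr_G)\cong\BC[T_{\ft/W}]$ is obtained as the $\hbar=0$ specialization of this very proposition rather than as an independent input --- so invoking it here risks circularity. The paper's fixed-point/graph argument is precisely what pins down those fiber directions; your proposal would need either that computation or a genuinely independent source for the $\hbar=0$ identification together with its compatibility with $\on{pr}_1^*,\on{pr}_2^*$.
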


\begin{proof}
  Since $H^\bullet_{G_\CO\rtimes\BC^\times}(\Gr_G)|_{\hbar=0}=H^\bullet_{G_\CO}(\Gr_G)$, we
  see that $\on{pr}_1^*|_{\hbar=0}=\on{pr}_2^*|_{\hbar=0}$. It follows that
  $(\on{pr}_1^*,\on{pr}_2^*,\on{pr}^*)\colon\BC[\ft/W\times\ft/W\times\BA^1]\to
  H^\bullet_{G_\CO\rtimes\BC^\times}(\Gr_G)$ factors as a composition
  $\BC[\ft/W\times\ft/W\times\BA^1]\to\BC[\CN_\Delta(\ft/W\times\ft/W)]
  \xrightarrow{\alpha}H^\bullet_{G_\CO\rtimes\BC^\times}(\Gr_G)$
  for a uniquely defined homomorphism $\alpha$. Let us check that $\alpha$ is
  injective. Indeed, $\alpha_{\on{loc}}\colon\BC[\CN_\Delta(\ft/W\times\ft/W)]
  \otimes_{\BC[\ft/W\times\BA^1]}\on{Frac}(\BC[\ft\times\BA^1])\to
  H^\bullet_{G_\CO\rtimes\BC^\times}(\Gr_G)\otimes_{\BC[\ft/W\times\BA^1]}
  \on{Frac}(\BC[\ft\times\BA^1])=H^\bullet_{T\times\BC^\times}(\Gr_G)
  \otimes_{\BC[\ft\times\BA^1]}\on{Frac}(\BC[\ft\times\BA^1])\hookrightarrow
  \lim\limits_{\gets}H^\bullet_{T\times\BC^\times}(\ol\Gr{}^\lambda_G)
  \otimes_{\BC[\ft\times\BA^1]}\on{Frac}(\BC[\ft\times\BA^1])=
  \prod_{\lambda\in X_*(T)}\on{Frac}(\BC[\ft\times\BA^1])$ associates to a function
  its restriction to the union of graphs
  $\Gamma_\lambda:=\{(x_1,x_2,c) : x_1=x_2+c\lambda\}\subset\ft\times\ft\times\BA^1$.
  More precisely, for a $T$-fixed point $\lambda\in\Gr_G$, the
  $\BC[\ft\times(\ft/W)\times\BA^1]$-module $H^\bullet_{T\times\BC^\times}(\lambda)$
  is canonically isomorphic to $(\Id,\pi,\Id)_*\CO_{\Gamma_\lambda}$, where $\pi$ stands
  for the projection $\ft\to\ft/W$. Indeed, let $p\colon\Fl_G\to\Gr_G$ be the
  projection from the affine flag variety $\Fl_G=G_\CK/\on{Iw}$ of $G$,
  and let $\tilde\lambda\in\Fl_G$
  be the $T$-fixed point in $p^{-1}(\lambda)$ corresponding to the coweight
  $\lambda\in X_*(T)\subset W_{\on{aff}}$. Viewing $H^\bullet_{T\times\BC^\times}(\Fl_G)$
  as $H^\bullet_{\BC^\times}(\on{Iw}\backslash G_\CK/\on{Iw})$, we identify
  $H^\bullet_{T\times\BC^\times}(\tilde\lambda)$ with a
  $\BC[\ft\times\ft\times\BA^1]$-module $M$ such that
  $(\Id,\pi,\Id)_*M=H^\bullet_{T\times\BC^\times}(\lambda)$. The preimage $T_\lambda$
  of $\tilde\lambda$ in $G_\CK$ is homotopy equivalent to the torus $T$, and
  the action of $T\times T\times\BC^\times$ on $T_\lambda$ is homotopy equivalent
  to $(t_1,t_2,c)\cdot t=t_1tt_2^{-1}\lambda(c)$. We conclude that
  $H^\bullet_{T\times\BC^\times}(\tilde\lambda)=H^\bullet_{\BC^\times}(T\backslash T_\lambda/T)
  =\BC[\Gamma_\lambda]$.

  Finally, the union $\bigcup_{\lambda\in X_*(T)}\Gamma_\lambda$ is Zariski dense in
  $\ft\times\ft\times\BA^1$. Hence $\alpha_{\on{loc}}$ is injective, and $\alpha$
  is injective as well.

  To finish the proof it suffices to compare the graded dimensions of the LHS and
  the RHS (the grading in the LHS arises from the natural action of $\BC^\times$
  on $\ft$ and $\BA^1$). Now $\dim_{\on{gr}}(H^\bullet_{G_\CO\rtimes\BC^\times}(\Gr_G))=
  \dim_{\on{gr}}(H^\bullet_{G_\CO}(\on{pt})\otimes H^\bullet_{\BC^\times}(\on{pt})\otimes
  H^\bullet(\Gr_G))=\dim_{\on{gr}}\BC[x_1,\ldots,x_r,y_1,\ldots,y_r,\hbar]$ where
  $\deg\hbar=2,\ \deg x_i=\deg y_i=2(m_i+1)$, and $m_1,\ldots,m_r$ are the exponents
  of $G$ (so that $m_i+1$ are the degrees of generators of $W$-invariant polynomials
  on $\ft$).

  Furthermore, $\ft/W=\Sigma$, and
  $\CN_\Delta(\Sigma\times\Sigma)\simeq\Sigma\times\Sigma\times\BA^1$ (an affine space).
  Indeed, for affine spaces $V,V'$ we have an isomorphism
  $\beta\colon V\times V'\times\BA^1\iso\CN_V(V\times V')$; namely,
  $\gamma\colon V\times V'\times\BA^1\to V\times V'\times\BA^1,\ (v,v',c)\mapsto
  (v,cv',c)$ factors through $V\times V'\times\BA^1\xrightarrow{\beta}
  \CN_{V\times V'}V\to V\times V'\times\BA^1$. We conclude that
  $\dim_{\on{gr}}(\on{LHS})=\dim_{\on{gr}}(\on{RHS})$. This completes the proof.
  \end{proof}

In view of~Proposition~\ref{cohomol}, we can view
$H^\bullet_{G_\CO\rtimes\BC^\times}(\Gr_G,-)$ as a functor from the full subcategory
$\mathcal{IC}_{G_\CO\rtimes\BC^\times}\subset D^b_{G_\CO\rtimes\BC^\times}(\Gr_G)$
formed by the semisimple complexes,
to $\Coh^{\BC^\times}(\CN_\Delta(\ft/W\times\ft/W))$. This functor is fully faithful
according to~\cite{gins}. For $V\in\on{Rep}(G^\vee)$, one can identify
$H^\bullet_{G_\CO\rtimes\BC^\times}(\Gr_G,S(V))$ with $\varkappa_\hbar(\on{Fr}(V))$;
moreover, one can make this identification compatible with the tensor structures
on $\on{Rep}(G^\vee)$ and $\on{Perv}_{G_\CO}(\Gr_G)$~\cite{befi}:

\begin{thm}
  \label{free Satake}
  The geometric Satake equivalence
  $\BS\colon \on{Rep}(G^\vee)\iso \on{Perv}_{G_\CO}(\Gr_G)$ extends to a tensor
  equivalence $\BS_\hbar\colon \HC_\hbar^{\on{fr}}\iso\mathcal{IC}_{G_\CO\rtimes\BC^\times}$
  such that $\varkappa_\hbar=H^\bullet_{G_\CO\rtimes\BC^\times}(\Gr_G,-)\circ\BS_\hbar$.
  There is also a quasiclassical version
  $\BS_{qc}\colon \Coh_{\on{fr}}^{G^\vee\times\BC^\times}(\gvee)^*\iso\mathcal{IC}_{G_\CO}$
  such that $\varkappa=H^\bullet_{G_\CO}(\Gr_G,-)\circ\BS_{qc}$.
\end{thm}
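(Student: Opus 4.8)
The plan is to bootstrap from \propref{cohomol} and the classical geometric Satake equivalence $\BS$, upgrading both the source and target sides to their $\BC^\times$-equivariant (resp.\ $\fZ_{\gvee}^{\gvee}$-module) refinements, and then checking compatibility of tensor structures. First I would recall that \propref{cohomol} identifies $H^\bullet_{G_\CO\rtimes\BC^\times}(\Gr_G)$ with $\BC[\CN_\Delta(\ft/W\times\ft/W)]$, so that for each $V\in\on{Rep}(G^\vee)$ the equivariant cohomology $H^\bullet_{G_\CO\rtimes\BC^\times}(\Gr_G,S(V))$ is a module over this ring. The key computation is to match this module with $\varkappa_\hbar(\on{Fr}(V))$. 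On the algebraic side, $\varkappa_\hbar(\on{Fr}(V))=(\on{Fr}(V)\overset{L}{\otimes}_{U_\hbar(\nvee_-)}(-\psi))^{N^\vee_-}$, and since $\on{Fr}(V)=U_\hbar\otimes V$ is free over $U_\hbar$, this reduction is computed without higher Tor; by the PBW-type argument already used in the proof that $\varkappa|_{\Coh^{G^\vee\times\BC^\times}_{\on{fr}}}$ is fully faithful, $\varkappa_\hbar(\on{Fr}(V))$ is free over $\BC[\ft/W\times\BA^1]$ of rank $\dim V$, with $\ft/W\times\ft/W\times\BA^1$-module structure factoring through $\BC[\CN_\Delta(\ft/W\times\ft/W)]$. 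On the topological side, the analogous freeness of $H^\bullet_{G_\CO\rtimes\BC^\times}(\Gr_G,S(V))$ over $H^\bullet_{G_\CO\rtimes\BC^\times}(\on{pt})=\BC[\ft/W\times\BA^1]$ follows from the parity (evenness) of the stalks of $\IC(\ol\Gr{}^\lambda_G)$ together with equivariant formality, so both sides are determined by their fibers at a generic point of $\ft/W\times\BA^1$, where $\ft/W$ is replaced by $\ft$ and the localization theorem reduces everything to the $T\times\BC^\times$-fixed points $\Lambda\subset\Gr_G$; there the identification of the weight spaces $V_\lambda$ with $\Phi_\lambda(S(V))$ from \corref{grading} pins down the isomorphism. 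This gives a natural isomorphism of functors $\varkappa_\hbar\cong H^\bullet_{G_\CO\rtimes\BC^\times}(\Gr_G,-)\circ\BS$ on objects; naturality in $V$ follows because both sides are fully faithful (the algebraic one by the Lemma preceding \propref{cohomol}'s subsection, the topological one by \cite{gins}) and agree with $\BS$ modulo $\hbar$.

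Second I would promote this to a \emph{tensor} equivalence. The target category $\mathcal{IC}_{G_\CO\rtimes\BC^\times}$ inherits a monoidal structure from convolution $\star$ on $D^b_{G_\CO\rtimes\BC^\times}(\Gr_G)$ (semisimplicity and stratified semismallness, \lemref{exact}, survive the extra $\BC^\times$-equivariance since the $\BC^\times$ acts trivially on $\Gr_G$ and only scales the loop parameter). The source category $\HC_\hbar^{\on{fr}}$ is monoidal under $M_1\otimes M_2:=M_1\otimes_{U_\hbar}M_2$ with the diagonal $G^\vee$-action, and one checks $\on{Fr}(V_1)\otimes\on{Fr}(V_2)\cong\on{Fr}(V_1\otimes V_2)$. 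The compatibility $\varkappa_\hbar(M_1\otimes M_2)\cong\varkappa_\hbar(M_1)\otimes_{\BC[\CN_\Delta]}\varkappa_\hbar(M_2)$ is the algebraic shadow of the convolution-cohomology formula $H^\bullet_{G_\CO\rtimes\BC^\times}(\Gr_G,\CF_1\star\CF_2)\cong H^\bullet_{G_\CO\rtimes\BC^\times}(\Gr_G,\CF_1)\otimes_{\BC[\ft/W\times\BA^1]}H^\bullet_{G_\CO\rtimes\BC^\times}(\Gr_G,\CF_2)$, the equivariant enhancement of the tensor property of $H^\bullet(\Gr_G,-)$ established via fusion in \secref{satake}; here the tensor product must be taken over $\BC[\CN_\Delta]$ rather than $\BC[\ft/W\times\BA^1]$ because of the middle-convolution geometry, and this is exactly where the deformation-to-the-normal-cone ring enters. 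One then invokes \cite{befi} for the precise matching of the associativity and commutativity constraints, giving $\BS_\hbar$.

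Third, the quasiclassical statement follows by specializing $\hbar=0$: reduction modulo $\hbar$ sends $\HC_\hbar^{\on{fr}}$ to $\Coh^{G^\vee\times\BC^\times}_{\on{fr}}(\gvee)^*$ (by \secref{free}, $\on{Fr}(V)|_{\hbar=0}=\BC[(\gvee)^*]\otimes V$), sends $\varkappa_\hbar$ to $\varkappa$ (the Lemma in \secref{deformation} identifies $\varkappa_\hbar(\on{Fr}(V))|_{\hbar=0}$ with $\BC[\Sigma]\otimes V$ as a $\fZ_{\gvee}^{\gvee}=T^*\Sigma$-module, matching $\varkappa$), and sends $\mathcal{IC}_{G_\CO\rtimes\BC^\times}$ to $\mathcal{IC}_{G_\CO}$ with $H^\bullet_{G_\CO\rtimes\BC^\times}(\Gr_G,-)|_{\hbar=0}=H^\bullet_{G_\CO}(\Gr_G,-)$; exactness of $\varkappa$ and the graded Nakayama lemma (already used in the Lemma above) ensure no information is lost in the specialization. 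I expect the main obstacle to be the tensor-compatibility step: verifying that the natural isomorphism on objects genuinely upgrades to a monoidal equivalence requires controlling the convolution cohomology $H^\bullet_{G_\CO\rtimes\BC^\times}(\Gr_G,\CF_1\star\CF_2)$ as a module over the \emph{two} different copies of $\BC[\ft/W]$ coming from the two boundary points, i.e.\ identifying it with a tensor product over $\BC[\CN_\Delta(\ft/W\times\ft/W)]$ rather than over $\BC[\ft/W\times\ft/W]$, and checking that the resulting constraints coincide with those on $\HC_\hbar^{\on{fr}}$ under $\varkappa_\hbar$; the fusion picture of \secref{satake} is the right tool but making the $\BC^\times$-equivariant bookkeeping precise is the delicate point, and this is precisely what is carried out in \cite{befi}.
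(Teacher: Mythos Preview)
The paper does not actually give a proof of this theorem: after the preparatory lemmas on $\varkappa$, $\varkappa_\hbar$ and \propref{cohomol}, it simply states that one can identify $H^\bullet_{G_\CO\rtimes\BC^\times}(\Gr_G,\BS(V))$ with $\varkappa_\hbar(\on{Fr}(V))$ compatibly with the tensor structures, citing~\cite{befi}, and then records the theorem. Your proposal is a faithful outline of the strategy carried out in~\cite{befi}---freeness plus fixed-point localization to pin down the module isomorphism, then the monoidal upgrade via the convolution/fusion picture, then specialization to $\hbar=0$---so you and the paper are taking the same route, with your write-up being a more detailed unpacking of what the citation contains. One small caution: in your second paragraph the tensor-product formula for equivariant cohomology of a convolution should be taken over the \emph{middle} copy of $\BC[\ft/W]$ (matching $\on{pr}_2^*$ on the first factor with $\on{pr}_1^*$ on the second), not simply over $\BC[\ft/W\times\BA^1]$; you gesture at this when you bring in $\BC[\CN_\Delta]$, but the precise bookkeeping of the three projections is exactly the content of~\cite{befi} that you defer to.
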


Now using the formality of RHom-algebras in our categories, one can deduce the
desired derived geometric Satake equivalence. To formulate it, we introduce a bit
of notation. To a dg-algebra $A$ one can associate the triangulated category of
dg-modules localized by quasi-isomorphisms, and a full triangulated subcategory
$D_{\on{perf}}(A)\subset D(A)$ of perfect complexes. Given an algebraic group $H$
acting on $A$, we can consider $H$-equivariant dg-modules and localize them
by quasi-isomorphisms, arriving at the equivariant version $D^H_{\on{perf}}(A)$.

We now consider the dg-versions $\Sym^{[]}(\gvee),U^{[]}_\hbar$ of the graded
algebras $\Sym(\gvee),U_\hbar$, equipping them with the zero differential and
the cohomological grading so that elements of $\gvee$ and $\hbar$ have degree~2.
The construction of the previous paragraph gives rise to the categories
$D^{G^\vee}_{\on{perf}}(U^{[]}_\hbar),D^{G^\vee}_{\on{perf}}(\Sym^{[]}(\gvee))$. Their
Ind-completions will be denoted by
$D^{G^\vee}(U^{[]}_\hbar),D^{G^\vee}(\Sym^{[]}(\gvee))$.
The Ind-completions of the equivariant derived categories
$D^b_{G_\CO\rtimes\BC^\times}(\Gr_G),D^b_{G_\CO}(\Gr_G)$ will be denoted by
$D_{G_\CO\rtimes\BC^\times}(\Gr_G),D_{G_\CO}(\Gr_G)$.

The following theorem is proved in~\cite{befi}.

\begin{thm}
  \label{nonfree}
  The equivalences of~Theorem~\ref{free Satake} extend to the equivalences of
  monoidal triangulated categories $\Psi_\hbar\colon
  D^{G^\vee}_{\on{perf}}(U^{[]}_\hbar)\iso
  D^b_{G_\CO\rtimes\BC^\times}(\Gr_G)$ and $\Psi_{qc}\colon
  D^{G^\vee}_{\on{perf}}(\Sym^{[]}(\gvee))\iso
  D^b_{G_\CO}(\Gr_G)$. They induce the equivalences of their Ind-completions
  $\Psi_\hbar\colon D^{G^\vee}(U^{[]}_\hbar)\iso D_{G_\CO\rtimes\BC^\times}(\Gr_G)$ and
  $\Psi_{qc}\colon D^{G^\vee}(\Sym^{[]}(\gvee))\iso D_{G_\CO}(\Gr_G)$.
\end{thm}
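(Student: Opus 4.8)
The plan is to deduce Theorem~\ref{nonfree} from Theorem~\ref{free Satake} by a \emph{formality} argument, exactly parallel to the strategy of Ginzburg–Kumar and of Bezrukavnikov–Finkelberg in the non-equivariant case. The starting observation is that both sides of the desired equivalence are (Ind-completions of) triangulated categories generated by a set of compact objects whose endomorphism dg-algebras we can compute. On the geometric side, $D^b_{G_\CO\rtimes\BC^\times}(\Gr_G)$ is generated by the semisimple complexes $S_\hbar(\on{Fr}(V))$ for $V\in\on{Rep}(G^\vee)$ (equivalently by the $\IC(\ol\Gr{}^\lambda_G)$), and by the fully faithfulness statement cited from~\cite{gins} the $\RHom$-spaces between these are computed by $H^\bullet_{G_\CO\rtimes\BC^\times}$ applied to the convolution, i.e.\ by $\varkappa_\hbar$ of the appropriate free Harish-Chandra bimodule. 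On the algebraic side, $D^{G^\vee}_{\on{perf}}(U^{[]}_\hbar)$ is generated by the modules $U^{[]}_\hbar\otimes V$, whose $\RHom$-spaces are computed by $\RHom_{U^{[]}_\hbar}(U^{[]}_\hbar\otimes V,U^{[]}_\hbar\otimes V')^{G^\vee}$. The first key step is therefore to match these two $\Ext$-algebras \emph{as dg-algebras} — not merely their cohomology — and the mechanism for that is a formality theorem: one shows the relevant dg-algebra of extensions on the $\Gr_G$ side is formal, so it is quasi-isomorphic to its cohomology, which by Theorem~\ref{free Satake} is the (intrinsically formal, because it sits in even degrees) algebra on the $U^{[]}_\hbar$ side.

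Concretely, I would proceed as follows. First, fix a projective generator picture: let $P=\bigoplus_\lambda \IC(\ol\Gr{}^\lambda_G)$ over a suitable exhausting family, regarded in $D^b_{G_\CO\rtimes\BC^\times}(\Gr_G)$, and let $A_{\on{top}}=\RHom(P,P)$ be the $\BC^\times$- and $G^\vee$-equivariant dg-algebra of self-extensions (here the $G^\vee$-action is the ``hidden'' symmetry coming from geometric Satake). Second, identify $H^\bullet(A_{\on{top}})$: by fully faithfulness of $H^\bullet_{G_\CO\rtimes\BC^\times}(\Gr_G,-)$ on $\mathcal{IC}_{G_\CO\rtimes\BC^\times}$ (the statement attributed to~\cite{gins}) together with Theorem~\ref{free Satake}, this cohomology is $\bigoplus_{\lambda,\mu}\Hom_{\HC_\hbar}(\on{Fr}(V^\lambda),\on{Fr}(V^\mu)\otimes(\text{something}))$, which after unwinding is the $G^\vee$-equivariant $\Ext$-algebra $\RHom_{U^{[]}_\hbar}(\bigoplus U^{[]}_\hbar\otimes V^\lambda,-)$ computed on the algebraic side; crucially this lives in even cohomological degrees because $H^\bullet_{G_\CO\rtimes\BC^\times}(\Gr_G)$ and the relevant cohomologies of partial flag-type varieties are even. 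Third — the formality step — argue that $A_{\on{top}}$ is formal: the grading by cohomological degree is even, and one has an extra internal $\BC^\times$-grading (the loop-rotation grading, i.e.\ the $\hbar$-direction, or rather a weight grading coming from the $\GG_m$ acting on $\Gr_G$) such that on the degree-$2k$ part of $A_{\on{top}}$ this weight equals $k$; a dg-algebra carrying such a ``purity'' grading with only even cohomological degrees is intrinsically formal by the standard Deligne–splitting / weight argument, so $A_{\on{top}}\simeq H^\bullet(A_{\on{top}})$ as dg-algebras. Fourth, conclude: both $D^b_{G_\CO\rtimes\BC^\times}(\Gr_G)$ and $D^{G^\vee}_{\on{perf}}(U^{[]}_\hbar)$ are recovered as $D^{G^\vee}_{\on{perf}}$ of quasi-isomorphic dg-algebras, hence are equivalent, and one checks the equivalence is monoidal by verifying it is compatible with the convolution/tensor structures on generators (where it is literally Theorem~\ref{free Satake} together with the tensor compatibility of $\varkappa_\hbar$ recorded there). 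The quasiclassical statement $\Psi_{qc}$ is the $\hbar=0$ specialization of the same argument, using $\BS_{qc}$ and $\varkappa$ in place of $\BS_\hbar$ and $\varkappa_\hbar$. Passing to Ind-completions is formal.

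I expect the main obstacle to be the \emph{formality step} and the precise bookkeeping that makes it go through. Two subtleties must be handled with care. One: the $\RHom$-algebra we need to split is equivariant for $G^\vee$, and the splitting must be constructed $G^\vee$-equivariantly; this is not automatic but follows because $G^\vee$ is reductive (so the equivariant derived category behaves well and invariants are exact) and the weight grading is $G^\vee$-invariant. Two: one must make sure the ``purity''/weight grading is genuinely there with the stated relation to cohomological degree on the \emph{full} $\Ext$-algebra including all the mixed terms $\RHom(\IC^\lambda,\IC^\mu\star\IC^\nu)$ and their convolution-associativity higher structure — i.e.\ one needs the parity and purity to be compatible with the $A_\infty$ / monoidal refinements, not just on $\Ext^\bullet$ between two objects. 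This is where invoking~\cite{befi} for the already-proven matching on the free subcategory (Theorem~\ref{free Satake}) does the heavy lifting: it reduces the problem to transporting a known tensor equivalence of abelian/exact categories through a formality statement, rather than re-deriving Satake. The remaining work — extending from the subcategory of free/semisimple objects to the perfect derived category, and thence to Ind-completions — is then a routine application of the fact that $D^H_{\on{perf}}(A)$ is the idempotent-complete triangulated hull of the free modules, and that Ind-completion commutes with such equivalences.
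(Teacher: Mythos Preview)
Your proposal is correct and matches the approach the paper itself indicates: the paper does not give a self-contained proof of Theorem~\ref{nonfree} but simply cites~\cite{befi}, after remarking that ``using the formality of RHom-algebras in our categories, one can deduce the desired derived geometric Satake equivalence.'' Your outline (compact generation by IC sheaves / free modules, identification of the $\Ext$-algebras via Theorem~\ref{free Satake} and~\cite{gins}, formality via purity/evenness, then passage to perfect and Ind-completed categories) is precisely the argument of~\cite{befi}, and the subtleties you flag ($G^\vee$-equivariant splitting, compatibility of the weight grading with the monoidal structure) are the genuine technical points handled there.
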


\subsubsection{The dualities}
\label{dualities}
We denote by $\fC_{G^\vee}$ the autoequivalence of $D^{G^\vee}(U^{[]}_\hbar)$
induced by the canonical outer automorphism of $G^\vee$ interchanging conjugacy
classes of $g$ and $g^{-1}$ (the Chevalley involution). We also denote by
${\mathcal C}_G$ the autoequivalence of $D_{G_\CO\rtimes\BC^\times}(\Gr_G)$ induced
by $g\mapsto g^{-1},\ G((z))\to G((z))$. Then the Verdier duality
$\BD\colon D_{G_\CO\rtimes\BC^\times}(\Gr_G)\to D_{G_\CO\rtimes\BC^\times}(\Gr_G)$ and
the duality $D\colon D^{G^\vee}(U^{[]}_\hbar)\to D^{G^\vee}(U^{[]}_\hbar)$ introduced
in~Example~\ref{Toda} are related by
$\Psi_\hbar\circ\fC_{G^\vee}\circ D=\BD\circ\Psi_\hbar$.

\subsection{The regular sheaf}
\label{regular sheaf}
Recall the setup of~Example~\ref{Toda}. We consider $\CalD_\hbar^{[]}(G^\vee)=
U^{[]}_\hbar\ltimes\BC[G^\vee]\in D^{G^\vee}(U^{[]}_\hbar)$. Its image under the
equivalence of~Theorem~\ref{nonfree} is the {\em regular sheaf}
$\CA_R^{\BC^\times}\in D_{G_\CO\rtimes\BC^\times}(\Gr_G)$ isomorphic to
$\bigoplus_{\lambda\in\Lambda^+}\on{IC}(\ol\Gr{}^\lambda_G)\otimes (V^\lambda)^*$.
The quasiclassical analogs are
$D^{G^\vee}(\Sym^{[]}(\gvee))\ni\BC[T^*G^\vee]^{[]}=\Sym^{[]}(\gvee)\otimes\BC[G^\vee]
\mapsto\CA_R\in D_{G_\CO}(\Gr_G)$. One can check that the image of
$\varkappa_\hbar\CalD_\hbar^{[]}(G^\vee)\in D^{G^\vee}(U^{[]}_\hbar)$ under the
equivalence of~Theorem~\ref{nonfree} is the dualizing sheaf
$\bfomega_{\Gr_G}\in D_{G_\CO\rtimes\BC^\times}(\Gr_G)$. It follows that the convolution
algebra of equivariant Borel-Moore homology
$H^{G_\CO\rtimes\BC^\times}_\bullet(\Gr_G)=H^\bullet_{G_\CO\rtimes\BC^\times}(\Gr_G,\bfomega_{\Gr_G})$
is isomorphic to quantum open Toda lattice
$\CT_\hbar=\varkappa_\hbar^{\on{right}}\varkappa_\hbar^{\on{left}}\CalD_\hbar(G^\vee)$
of~Example~\ref{Toda}.

Note that the regular sheaf $\CA_R^{\BC^\times}$ is equipped with an action of
$G^\vee$. Furthermore, the dg-algebra
$\on{RHom}_{D_{G_\CO\rtimes\BC^\times}(\Gr_G)}(\CA_R^{\BC^\times},\CA_R^{\BC^\times})$
is formal, and we have a $G^\vee$-equivariant morphism of dg-algebras
$U^{[]}_\hbar\to\on{RHom}_{D_{G_\CO\rtimes\BC^\times}(\Gr_G)}
(\CA_R^{\BC^\times},\CA_R^{\BC^\times})$
(corresponding to the {\em right} action of $U^{[]}_\hbar$ on
$\CalD_\hbar^{[]}(G^\vee)$). Similarly, the dg-algebra
$\on{RHom}_{D_{G_\CO}(\Gr_G)}(\CA_R,\CA_R)$ is formal,
and we have a $G^\vee$-equivariant morphism of dg-algebras
$\Sym^{[]}(\gvee)\to\on{RHom}_{D_{G_\CO}(\Gr_G)}(\CA_R,\CA_R)$
(corresponding to the {\em right} action of $\Sym^{[]}(\gvee)$ on
$\BC[T^*G^\vee]^{[]}$). Hence for any $\CF\in D_{G_\CO\rtimes\BC^\times}(\Gr_G)$,
the complex $\on{RHom}_{D_{G_\CO\rtimes\BC^\times}(\Gr_G)}(\CA_R^{\BC^\times},\CF)$
carries a structure of $G^\vee$-equivariant dg-module over $U^{[]}_\hbar$.

Thus the functors $\on{RHom}_{D_{G_\CO\rtimes\BC^\times}(\Gr_G)}(\CA_R^{\BC^\times},\bullet),\
\on{RHom}_{D_{G_\CO}(\Gr_G)}(\CA_R,\bullet)$ may be viewed as landing respectively
into $D^{G^\vee}(U^{[]}_\hbar),\ D^{G^\vee}(\Sym^{[]}(\gvee))$.
We will also need their versions
\begin{multline*}
  \Phi_\hbar:=\on{RHom}_{D_{G_\CO\rtimes\BC^\times}(\Gr_G)}({\mathbf 1}_{\Gr_G},
{\mathcal C}_G\CA_R^{\BC^\times}\star\bullet)\iso
\on{RHom}_{D_{G_\CO\rtimes\BC^\times}(\Gr_G)}(\BD\CA_R^{\BC^\times},\bullet)\\
\iso\on{RHom}_{D_{G_\CO\rtimes\BC^\times}(\Gr_G)}(\BC_{\Gr_G},
\CA_R^{\BC^\times}\otimes^!\bullet),
\end{multline*}
and
\begin{multline*}
  \Phi_{qc}:=\on{RHom}_{D_{G_\CO}(\Gr_G)}({\mathbf 1}_{\Gr_G},
{\mathcal C}_G\CA_R\star\bullet)\iso
\on{RHom}_{D_{G_\CO}(\Gr_G)}(\BD\CA_R,\bullet)\\
\iso\on{RHom}_{D_{G_\CO}(\Gr_G)}(\BC_{\Gr_G},\CA_R\otimes^!\bullet).
\end{multline*}
The following lemma
is proved in~\cite{bfn4}.

\begin{lem}
  \label{3.14}
  \textup{(a)}
 The functors $\on{RHom}_{D_{G_\CO\rtimes\BC^\times}(\Gr_G)}(\CA_R^{\BC^\times},\bullet)\colon
  D_{G_\CO\rtimes\BC^\times}(\Gr_G)\to D^{G^\vee}(U^{[]}_\hbar)$ and
  $\on{RHom}_{D_{G_\CO}(\Gr_G)}(\CA_R,\bullet)\colon
  D_{G_\CO}(\Gr_G)\to D^{G^\vee}(\Sym^{[]}(\gvee))$ are canonically isomorphic
  to $\Psi_\hbar^{-1}$ and $\Psi_{qc}^{-1}$ respectively.

  \textup{(b)} The functors $\Phi_\hbar\colon
  D_{G_\CO\rtimes\BC^\times}(\Gr_G)\to D^{G^\vee}(U^{[]}_\hbar)$ and
  $\Phi_{qc}\colon
  D_{G_\CO}(\Gr_G)\to D^{G^\vee}(\Sym^{[]}(\gvee))$ are canonically isomorphic
  to $\fC_{G^\vee}\circ\Psi_\hbar^{-1}$ and $\fC_{G^\vee}\circ\Psi_{qc}^{-1}$ respectively.
\end{lem}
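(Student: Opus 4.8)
The plan is to deduce the lemma from the derived geometric Satake equivalence $\Psi_\hbar$ of \thmref{nonfree} (and its quasiclassical companion $\Psi_{qc}$), all the work being to keep track of the extra structures. Recall that $\Psi_\hbar$ carries $\CalD_\hbar^{[]}(G^\vee)$ to the regular sheaf $\CA_R^{\BC^\times}\cong\bigoplus_{\lambda\in\Lambda^+}\on{IC}(\ol\Gr{}^\lambda_G)\otimes(V^\lambda)^*$, and that, besides the $G^\vee$-equivariant left-$U^{[]}_\hbar$-module structure making it an object of $D^{G^\vee}(U^{[]}_\hbar)$, the bimodule $\CalD_\hbar^{[]}(G^\vee)$ carries a second, commuting such structure coming from the right-invariant $\hbar$-differential operators (cf.\ the quantum Toda lattice above); under $\Psi_\hbar$ this second structure goes over to the $G^\vee$-action on $\CA_R^{\BC^\times}$ together with the map $U^{[]}_\hbar\to\on{RHom}_{D_{G_\CO\rtimes\BC^\times}(\Gr_G)}(\CA_R^{\BC^\times},\CA_R^{\BC^\times})$ recalled above, and it is this package that makes $\on{RHom}_{D_{G_\CO\rtimes\BC^\times}(\Gr_G)}(\CA_R^{\BC^\times},-)$ land in $D^{G^\vee}(U^{[]}_\hbar)$. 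Since $\Psi_\hbar$ is an equivalence, for part (a) it suffices to show that the endofunctor $\on{RHom}_{D^{G^\vee}(U^{[]}_\hbar)}(\CalD_\hbar^{[]}(G^\vee),-)$ of $D^{G^\vee}(U^{[]}_\hbar)$ --- with target structure induced from the second structure on $\CalD_\hbar^{[]}(G^\vee)$ --- is canonically isomorphic to the identity, and likewise with $(\CalD_\hbar^{[]}(G^\vee),U^{[]}_\hbar)$ replaced by $(\BC[T^*G^\vee]^{[]},\Sym^{[]}(\gvee))$.

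For this last statement the plan is a Peter--Weyl reconstruction. Transporting the decomposition $\CA_R^{\BC^\times}\cong\bigoplus_\lambda\on{IC}(\ol\Gr{}^\lambda_G)\otimes(V^\lambda)^*$ through $\Psi_\hbar$ presents $\CalD_\hbar^{[]}(G^\vee)=U^{[]}_\hbar\ltimes\BC[G^\vee]$, as an object of $D^{G^\vee}(U^{[]}_\hbar)$, as a sum of free Harish-Chandra bimodules $\on{Fr}(V^\lambda)$ with multiplicity spaces $(V^\lambda)^*$ carrying the second structure. Each $\on{Fr}(V^\lambda)$ is projective in $\HC_\hbar$ (as recalled above), so $\on{RHom}$ out of it has no higher terms, and since $\on{Fr}$ is left adjoint to $\on{res}\colon\HC_\hbar\to\on{Rep}(G^\vee)$ one gets $\on{RHom}_{D^{G^\vee}(U^{[]}_\hbar)}(\on{Fr}(V^\lambda),M)\simeq\on{Hom}_{\on{Rep}(G^\vee)}(V^\lambda,\on{res}M)$. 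Summing over $\lambda$ and tensoring back by the multiplicity spaces, $\on{RHom}_{D^{G^\vee}(U^{[]}_\hbar)}(\CalD_\hbar^{[]}(G^\vee),M)\simeq\bigoplus_\lambda\on{Hom}_{\on{Rep}(G^\vee)}(V^\lambda,\on{res}M)\otimes V^\lambda\simeq\on{res}M$ by the isotypic decomposition of the $G^\vee$-module $\on{res}M$; it then remains only to check that the $U^{[]}_\hbar$-action transported from the second structure on $\CalD_\hbar^{[]}(G^\vee)$ is the original one on $M$, which is a direct computation with the right-invariant $\hbar$-differential operators, in the spirit of the last lemma of \secref{deformation}. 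Mild care with the Ind-completions (the sum above is really a product over $\Lambda^+$) and with the cohomological grading is needed, but no further idea enters; the quasiclassical statement is identical with $\on{Fr}(V)=V\otimes\CO_{(\gvee)^*}$.

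Part (b) then follows formally. By the chain of isomorphisms recalled just before the lemma, $\Phi_\hbar\simeq\on{RHom}_{D_{G_\CO\rtimes\BC^\times}(\Gr_G)}(\BD\CA_R^{\BC^\times},-)$. The relation $\Psi_\hbar\circ\fC_{G^\vee}\circ D=\BD\circ\Psi_\hbar$ recalled above gives $\BD\CA_R^{\BC^\times}\cong\Psi_\hbar(\fC_{G^\vee}D\CalD_\hbar^{[]}(G^\vee))$, and $\CalD_\hbar^{[]}(G^\vee)$ is self-dual under $D$ (immediate from $D\on{Fr}(V)\cong\on{Fr}(V^*)$ and the form of the decomposition above), so $\BD\CA_R^{\BC^\times}\cong\Psi_\hbar(\fC_{G^\vee}\CalD_\hbar^{[]}(G^\vee))$. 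Since $\fC_{G^\vee}$ is an involutive autoequivalence induced by a group automorphism, it commutes with the formation of $\on{RHom}$, so $\Phi_\hbar\simeq\on{RHom}_{D^{G^\vee}(U^{[]}_\hbar)}(\fC_{G^\vee}\CalD_\hbar^{[]}(G^\vee),\Psi_\hbar^{-1}(-))\simeq\fC_{G^\vee}\on{RHom}_{D^{G^\vee}(U^{[]}_\hbar)}(\CalD_\hbar^{[]}(G^\vee),\Psi_\hbar^{-1}(-))\simeq\fC_{G^\vee}\circ\Psi_\hbar^{-1}$ by part (a). The quasiclassical case uses the $\Sym^{[]}(\gvee)$-analogue of the duality relation in the same way.

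The genuine content, and the step I expect to be the main obstacle, is the compatibility invoked in the first paragraph: that $\Psi_\hbar$ matches $\CalD_\hbar^{[]}(G^\vee)$ with $\CA_R^{\BC^\times}$ \emph{together with} its $G^\vee$-equivariance and its second $U^{[]}_\hbar$-module structure. This rests on the formality of $\on{RHom}_{D_{G_\CO\rtimes\BC^\times}(\Gr_G)}(\CA_R^{\BC^\times},\CA_R^{\BC^\times})$ and its identification with $U^{[]}_\hbar$ --- that is, on the construction of $\Psi_\hbar$ in \thmref{nonfree} out of \thmref{free Satake} in \cite{befi}. Granted that input, everything above is bookkeeping, the only genuinely new (and elementary) point being the identity-functor property of $\CalD_\hbar(G^\vee)$ used in the second paragraph.
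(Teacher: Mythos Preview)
The paper does not actually prove this lemma in-text; it merely states that it ``is proved in~\cite{bfn4}.'' So there is no proof here to compare against directly. That said, your approach is the natural one and is essentially what the argument in~\cite{bfn4} amounts to: transport everything through $\Psi_\hbar$, reduce to the fact that $\CalD_\hbar^{[]}(G^\vee)$ represents the identity functor on $D^{G^\vee}(U^{[]}_\hbar)$ via its second (right-invariant) structure, and then for~(b) use the duality relation $\Psi_\hbar\circ\fC_{G^\vee}\circ D=\BD\circ\Psi_\hbar$ from~Section~\ref{dualities} together with the self-duality $D\CalD_\hbar^{[]}(G^\vee)\simeq\CalD_\hbar^{[]}(G^\vee)$.

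A couple of small points worth tightening. First, in part~(b) your step ``$\fC_{G^\vee}$ commutes with the formation of $\on{RHom}$'' deserves one more line: since $\fC_{G^\vee}$ is an involutive autoequivalence one has $\on{RHom}(\fC_{G^\vee}A,B)\simeq\on{RHom}(A,\fC_{G^\vee}B)$, and the point is that the \emph{second} $G^\vee\ltimes U^{[]}_\hbar$-structure on $\fC_{G^\vee}\CalD_\hbar^{[]}(G^\vee)$ is exactly the $\fC_{G^\vee}$-twist of the one on $\CalD_\hbar^{[]}(G^\vee)$, so that the output acquires the $\fC_{G^\vee}$-twisted structure as required. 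Second, the sum-versus-product issue you flag in the Ind-completion is genuine but harmless here precisely because each $\on{Fr}(V^\lambda)$ is compact, so that $\on{RHom}$ out of their direct sum is the product of the individual $\on{RHom}$'s and the Peter--Weyl reassembly goes through. You are right that the only nonformal input is the compatibility of $\Psi_\hbar$ with the full bimodule structure on $\CalD_\hbar^{[]}(G^\vee)$, which is part of the content of~\cite{befi} underlying \thmref{nonfree}.
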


\section{Motivation II: What do we (as mathematicians) might want from 3d N=4 SUSY QFT?
  (naive approach)}\label{naive}

\subsection{Some generalities}
In this Section we would like to introduce some language related to 3-dimensional N=4 super-symmetric quantum field theories.
The reader should be warned from the very beginning: here we are going to use all the words from physics as a ``black box". More precisely, we are not going to try to explain what such a theory really is from a mathematical point of view. Instead we are going to review the relevant ``input data" (i.e.\ to what mathematical structures physicists usually attach such a QFT) and some of the ``output data" (i.e.\ what mathematical structures one should get in the end. This will be largely extended in Section \ref{categorical}, where we partly address the question ``what kind of structures these 3d N=4 SUSY QFTs really are from a mathematical point of view?"). Also it will be important for us to recall (in this Section) what one can do with these theories: i.e. we are going to discuss some operations which produce new quantum field theories out of old ones.

The reader should be warned from the very beginning about the following: both in this Section and in Section \ref{categorical} we are only going to discuss algebraic aspects of the above quantum field theories (such as e.g.\ algebraic varieties or categories one can attach to them). In principle ``true physical theory" is supposed to have some interesting analytic aspects (such as e.g.\ a metric on the above varieties). These analytic aspects will be completely ignored in these notes. Essentially, this means that we are going to study quantum field theories up to certain ``algebraic equivalence" but we are not going to discuss details in these notes.

\subsection{Higgs and Coulomb branch and 3d mirror symmetry}A 3d N=4 super-symmetric quantum field theory $\calT$ is supposed to have a well-defined moduli space of vacua. This should be some complicated (though interesting) space. This space is somewhat too complicated for our present discussion. Instead we are going to discuss some ``easy" parts of this space. Namely, the above moduli space of vacua should have two special pieces called the Higgs and the Coulomb branch of the moduli space of vacua; we shall denote these by $\calM_H(\calT)$ and $\calM_C(\calT)$. They are supposed to be Poisson (generically symplectic) complex algebraic varieties.\footnote{In fact, this is already a simplification: non-algebraic holomorphic symplectic manifolds should also arise in this way, but we are not going to discuss such theories.}
They should also be hyper-k\"ahler in some sense, but (to the best of our knowledge) this notion is not well-defined for singular varieties, we are going to ignore the hyper-k\"ahler structure in these notes. So, in this section we are going to think about a theory $\calT$ in terms of $\calM_H(\calT)$ and $\calM_C(\calT)$. Of course, this is a very small part of what the actual ``physical theory" is, but we shall see that even listing the structures that physicists expect from $\calM_H(\calT)$ and $\calM_C(\calT)$ will lead us to interesting constructions.

One of the operations on theories that will be important in the future is the operation of ``3-dimensional mirror symmetry". Namely, physicists expect that for a theory $\calT$ there should exist a mirror dual theory $\calT^*$ such that
$\calM_H(\calT^*)=\calM_C(\calT)$ and $\calM_C(\calT^*)=\calM_H(\calT)$.

\subsection{More operations on theories}
In what follows we shall use the following notation: for a symplectic variety $X$ with a Hamiltonian $G$-action
we shall denote by $X\tslash G$ the Hamiltonian reduction of $X$ with respect to $G$.

Then the following operations on theories are expected to make sense (in the next subsection we shall start considering examples) :

\medskip
\noindent
1) If $\calT_1,\cdots,\calT_n$ are some theories then one can form their product $\calT_1\times\cdots\times \calT_n$. We have
$$
\calM_H(\calT_1\times\cdots\times \calT_n)=\calM_H(\calT_1)\times\cdots\times \calM_H(\calT_n),
$$
and
$$
\calM_C(\calT_1\times\cdots\times \calT_n)=\calM_C(\calT_1)\times\cdots\times \calM_C(\calT_n).
$$

\medskip
\noindent
2) Let $\calT$ be a theory and let $G$ be a complex reductive group. Then there is a notion of $G$ acting on $\calT$. Physicists say in this case that $G$ maps to the flavor symmetry group of $\calT$, or that we are given a theory $\calT$ with flavor symmetry $G$.

Assume that we are given a theory $\calT$ with flavor symmetry $G$. Then there is a new theory $\calT/G$ obtained by ``gauging" $G$. The origin of the notation is explained in 3).

\medskip
\noindent
3) We have $\calM_H(\calT/G)=\calM_H(\calT)\tslash G$. Of course, the notion of Hamiltonian reduction can be understood in several different ways, so we need to talk a little about what we mean by $\tslash G$ here. Recall that the Hamiltonian reduction is defined as follows. Let $\calX$ be any Poisson variety endowed with a Hamiltonian action of $G$.
Then we have the moment map $\mu: \calX\to \grg^*$. Then we are supposed to have $\calX\tslash G=(\mu^{-1}(0))/G$. Here there are two delicate points. First, the map $\mu$ might not be flat, so honestly we must take $\mu^{-1}(0)$ in the dg-sense. Second, we must specify what we mean by quotient by $G$. In these notes we shall mostly deal with examples when $\calX$ is affine and we shall be primarily interested in the algebra of functions on $\calX\tslash G$. For these purposes it is enough to work with the so called ``categorical quotient", i.e.\ we set
$$
\CC[\calX\tslash G]=(\CC[\mu^{-1}(0)])^G.
$$
Note that according to our conventions this might be a dg-algebra.

\medskip
\noindent
4) Given $\calT$ and $G$ as above (assuming that $G$ is connected and reductive) one can construct a ring object $\calA_\calT$ in $D_{G_{\calO}}(\Gr_G)$ (sometimes we shall denote it by $\calA_{\calT,G}$ when we need to stress the dependence on $G$).
The $!$-stalk of $\calA_\calT$ at the unit point of $\Gr_G$ is $\CC[\calM_C(\calT)]$ and $H^*_{G_{\calO}}(\calA_\calT)=\CC[\calM_C(\calT/G)]$.
In fact, the object $\calA_\calT$ should also have a Poisson structure (which will induce a Poisson structure on $\CC[\calM_C(\calT)]$ and on
 $\calM_C(\calT/G)$) but we are going to ignore this issue for now.

Let us as before denote by $i$ the emebedding of the the point 1 in $\Gr_G$. Then $i^!\calA_\calT$ can be regarded as a ring object
of the equivariant derived category $D_G(\pt)$. Its equivariant cohomology $H^*_G(\pt,i^!\calA_\calT)$ is a graded algebra over $H^*_G(\pt,\CC)=\CC[\grg]^G=\CC[\grt]^W$ whose (derived) fiber over $0$ is equal to $\CC[\calM_C]$. Thus flavor symmetry $G$ is supposed to define a (Poisson) deformation of $\calM_C$ over the base $\grt/W$. In particular, by base change we should have a Poisson deformation of $\calM_C$ over $\grt$.

5) Let $H$ be a subgroup of $G$. Then $\calA_{\calT,H}$ is equal to the $!$-restriction of $\calA_{\calT,G}$ to $\Gr_H$.

\medskip
\noindent
6) For a reductive group $G$ there is a theory $\calT[G]$ such that

a) $\calT[G]$ has flavor symmetry $G$.

b) $\calM_H(\calT[G])=\calN_{\grg}, \calM_C(\calT[G])=\calN_{\grg^{\vee}}$; here $\grg=\on{Lie}(G)$ and $\calN_{\grg}$ is its nilpotent cone.

c) $\calA_{\calT[G^{\vee}]}=\calA_R$ (our ``regular representation" sheaf on $\Gr_G$).

d) $\calT[G]^*=\calT[G^{\vee}]$.

\medskip
\noindent
7) For a theory $\calT$ with flavor symmetry $G$ there should exist another (S-dual) theory $\calT^{\vee}$ with flavor symmetry $G^{\vee}$ (it is defined via S-duality for 4-dimensional N=4 super-symmetric gauge theory).
Gaiotto and Witten \cite{gw} claim that
\begin{equation}\label{gw}
\calT^{\vee}=((\calT\times \calT[G])/ G)^*
\end{equation}
(here the gauging is taken with respect to diagonal copy of $G$).

In particular, the RHS of (\ref{gw}) has an action of $G^{\vee}$ (which a priori is absolutely non-obvious). Here is an example: take $\calT$ to be the trivial theory with trivial $G$-action (in this case $\calM_H=\calM_C=\pt$, but the structure is still somewhat non-trivial as we remember the group $G$). Then $\calT\times \calT[G]=\calT[G]$. Now $\calT[G]/G$ is the theory whose (naive - i.e.\ not dg) Higgs branch is $\pt$ and whose Coulomb branch is isomorphic to $G^{\vee}\times \grt/W=\Spec(H^*_{G_{\calO}}(\Gr_G,\calA_R))$. Since the mirror duality interchanges $\calM_H$ and $\calM_C$ we see that $\calM_H(\calT^{\vee})$ has an action of $G^{\vee}$.

More generally, it follows that
\begin{equation}\label{higgs}
\CC[\calM_H(\calT^{\vee})]=H^*_{G_{\calO}}(\Gr_G,\calA_\calT\overset{!}\otimes \calA_R)
\end{equation}
(this follows from 5). In particular, it has a natural action of $G^{\vee}$.

Let us now pass to examples.

\subsection{Basic example}\label{basic-example}
This is in some sense the most basic example.
Let $\bfM$ be a connected symplectic algebraic variety over $\CC$. Then to $\bfM$ there should correspond a theory $T(\bfM)$ for which
$\calM_H=\bfM$ and $\calM_C=\pt$. In fact, this is true only if dg-structures are disregarded. However, in these notes we shall mostly care about the case when $\bfM$ is just a symplectic vector space and in this case it should be true as stated (cf.\ Subsection \ref{cat-example} for more details).

\subsection{Gauge theories}
Let $G$ be a reductive group. Then an action of $G$ on $T(\bfM)$ should be the same as a Hamiltonian action of $G$ on $\bfM$.\footnote{By Hamiltonian action we mean a symplectic action with fixed moment map.}
Then we can form the theory $T(\bfM)/ G$.

Assume that $\bfM$ is actually a symplectic vector space and that the action of $G$ on $\bfM$ is linear. Then the theory $T(\bfM)/ G$ is called {\em gauge theory with matter $\bfM$}. In the case when $\bfM=\bfN\oplus\bfN^*=T^*\bfN$ for some repesentation $\bfN$ of $G$, the Coulomb branch of these theories together with the corresponding objects $\calA_\calT$ was rigorously defined and studied in \cite{bfn2,bfn3,bfn4}.
Unfortunately, at this point we don't know how to modify our constructions so that they will depend on $\bfM$ rather than on $\bfN$ (but we can check that different ways of representing $\bfM$ as $T^*\bfN$ (in the cases where it is possible) lead to the same $\calM_C$).
We shall sometimes denote the theory $T(\bfM)/ G$ simply by $T(G,\bfN)$.

Here is an interesting source of pairs $(G,\bfN)$ as above. Let $Q$ be an oriented quiver (a.k.a. finite oriented graph) with set of vertices $I$. Let $V$ and $W$ be two finite-dimensional $I$-graded vector spaces over $\CC$. Set
$$
G=\prod\limits_{i\in I} \GL(V_i), \quad \bfN=(\bigoplus \limits_{i\to j} \Hom(V_i,V_j))\oplus(\bigoplus \Hom(V_i,W_i)).
$$
Theories associated with such pairs $(G,\bfN)$ are called {\em quiver gauge theories}. In the case when $Q$ is a quiver of finite Dynkin type the corresponding Coulomb branches are studied in detail in \cite{bfn3}; we review some of these results in Section \ref{quiver}.

\subsection{Toric gauge theories}\label{toric-gauge}
Let $T\subset (\CC^{\times})^n$ be an algebraic torus. We set $T_F=(\CC^{\times})^n/T$ (this is also an algebraic torus).
Then $T$ acts naturally on $\CC^n$, so we can set $\bfN=\CC^n,\ G=T$ in the notation of the previous subsection.

Note that the torus $T_G^{\vee}$ also naturally embeds to $(\CC^\times)^n$ (by dualizing the quotient map $(\CC^\times)^n\to T_F$). It is then expected that the mirror dual to the theory associated to $(T,\CC^n)$ is equal to the theory associated with $(T_F^{\vee},\CC^n)$. Note that this implies that
the Coulomb branch of the former must be isomorphic to $T^*\CC^n\tslash T_F^{\vee}$ (since this is the Higgs branch of the latter). As was mentioned earlier, in the next Section we are going to give a rigorous definition of Coulomb branches for gauge theories of cotangent type and  the above expectation in the toric case is proven in Example \ref{toric-hyp}.

\subsection{Sicilian theories} Let $\Sig$ be a Riemann surface obtained from a compact Riemann surface $\overline{\Sigma}$ by making $n$ punctures. Let also $G$ be a reductive group. To this data physicists associate a theory $T(\Sig,G)$ (``Sicilian theory") with an action of $G^n$.
The construction is by ``compactifying" certain 6-dimensional theory (attached to $G$) on $\Sig\times S^1$.

One of the key statements from physics is that the theory associated to a sphere with $n$-punctures and the group $G^{\vee}$ is
\begin{equation}\label{formula}
(\underbrace{(\calT[G]\times \cdots \times \calT[G])}_{\text{$n$ times}}/ G)^*.
\end{equation}
Here we are gauging the diagonal action of $G$.
It has an action of $(G^{\vee})^n$ for reasons similar to 7).
There should be in fact a simpler statement (when you start with a theory corresponding to any surface and make an additional puncture), but we are a little confused now about what it is.
In particular, it says that functions on the Higgs branch (for any $G$) of (\ref{formula}) is
$H^*_{G_{\calO}}(\Gr_G,\underbrace{\calA_R\overset{!}\otimes\cdots\overset{!}\otimes \calA_R}_{\text{$n$ times}})$ (which is what we knew before for $G$ of type $A$). More precisely, for $G=GL(r)$ the theory $\calT[G]$ is a quiver theory of type $A_{r+1}$ and then the theory $\underbrace{(\calT[G]\times \cdots \times \calT[G])/ G}_{\text{$n$ times}}$ is the corresponding star-shaped quiver theory. Interested reader can consult~\cite[Section 6]{bfn4} for more details.

\subsection{S-duality vs. derived Satake}
Let $\calT$ be a theory with $G$-symmetry and let $\calA_{\calT,G}$ be the corresponding ring object on $\Gr_G$.
We would like to describe the corresponding data for the S-dual theory $\calT^{\vee}$.
Let $\Psi_G$ denote the derived geometric Satake functor going from $D(\Gr_G)$ to the derived category of $G^{\vee}$-equivariant dg-modules
over $\Sym(\grg^{\vee}[-2])$. Then the cohomology $h^*(\Psi_G(\calA_{\calT,G}))$ with grading disregarded can be viewed as a commutative ring object in the
cateogory of $G^{\vee}$-equivariant modules over $\Sym(\grg^{\vee})$. In other words, $\Spec(h^*(\Psi_G(\calA_{\calT,G})))$ is a $G^{\vee}$-scheme endowed with a compatible morphism to $(\grg^{\vee})^*$.

It follows from the results of the previous Section that
$$
\calM_H(\calT^{\vee})=\Spec(h^*(\Psi_G(\calA_{\calT,G}))).
$$

Another (categorical) relationship between the assignment $\calT\mapsto \calT^{\vee}$ and geometric Langlands duality will be discussed in subsection \ref{glanglands}.

\section{Coulomb branches of 3-dimensional gauge theories}\label{Coulomb}
In this Section we explain how to define Coulomb branches (and some further structures related to flavor symmetry) for gauge theories of cotangent type.

\subsection{Summary}
Let us summarize what is done in this Section. Let $G$ be a complex connected reductive group and let $\bfN$ be a representation of it.
In this Section we are going to define mathematically the Coulomb branch $\calM_C(G,\bfN)$ of the gauge theory $\calT(T^*\bfN)/ G$. These Coulomb branches will satisfy the following (non-exhaustive) list of properties:

(1) $\calM_C(G,\bfN)$ is a normal, affine generically symplectic Poisson variety (conjecturally it is singular symplectic but we don't know how to prove this).

(2) Let $T$ be a maximal torus in $G$ and let $W$ be the Weyl group of $G$. Then $\calM_C(G,\bfN)$ is birationally isomorphic to $(T^*T^{\vee})/W$. This birational isomorphism is compatible with the Poisson structure. In particular, $\dim (\calM_C(G,\bfN)=2\on{rank} G$.

(3) There is a natural ``integrable system" map $\pi:\calM_C(G,\bfN)\to \grt/W$ which has Lagrangian fibers.

(4) $\calM_C(G,\bfN)$ is equipped with a canonical quantization; the map $\pi$ also gets quantized.

\subsection{General setup}
\label{general setup}
Let $\bN$ be a finite dimensional representation of a complex connected
reductive group $G$.
We consider the moduli space $\CR_{G,\bN}$ of triples $(\CP,\sigma,s)$
where $\CP$ is a $G$-bundle on the formal disc $D=\on{Spec}\CO;\ \sigma$
is a trivialization of $\CP$ on the punctured formal disc $D^*=\on{Spec}\CK$;
and $s$ is a section of the associated vector bundle
$\CP_{\on{triv}}\Gtimes\bN$ on $D^*$ such that $s$ extends to a
regular section of $\CP_{\on{triv}}\Gtimes\bN$ on $D$, and
$\sigma(s)$ extends to a regular section of $\CP\Gtimes\bN$ on
$D$. In other words, $s$ extends to a regular section of the vector bundle
associated to the $G$-bundle
glued from $\CP$ and $\CP_{\on{triv}}$ on the non-separated formal scheme glued
from $2$ copies of $D$ along $D^*$ ({\em raviolo}).
The group $G_\CO$ acts on $\CR_{G,\bN}$ by changing the trivialization
$\sigma$, and we have an evident
$G_\CO$-equivariant projection $\CR_{G,\bN}\to\Gr_G$ forgetting $s$.
The fibers of this projection are profinite dimensional vector spaces:
the fiber over the base point is $\bN\otimes\CO$, and all the other fibers
are subspaces in $\bN\otimes\CO$ of finite codimension. One may say that
$\CR_{G,\bN}$ is a $G_\CO$-equvariant ``constructible profinite dimensional
vector bundle'' over $\Gr_G$.

\subsubsection{Example: affine Steinberg variety}
\label{steinberg}
If $\bN$ is the adjoint representation $G\curvearrowright\fg$, then $\CR_{G,\bN}$ is isomorphic
to the union
$\bigcup_{\lambda\in\Lambda^+}T^*_{\Gr^\lambda_G}\Gr_G$
of conormal bundles to the $G_\CO$-orbits in $\Gr_G$.

\bigskip

The $G_\CO$-equivariant Borel-Moore homology
$H^{G_\CO}_\bullet(\CR_{G,\bN})$ is defined via the following limiting procedure.

We define $\CR_{\leq\lambda}$ as the preimage of $\ol\Gr{}^\lambda_G$ in
$\CR:=\CR(G,\bN)$.
It suffices to define the $G_\CO$-equivariant Borel-Moore homology
$H_\bullet^{G_\CO}(\CR_{\leq\lambda})$  along with the maps
$H_\bullet^{G_\CO}(\CR_{\leq\lambda})\to H_\bullet^{G_\CO}(\CR_{\leq\mu})$
for $\lambda\leq\mu$. For a fixed $\lambda$ and $d\gg0,\ \CR_{\leq\lambda}$ is
invariant under the translations by  $z^d\bN_\CO$, and we denote the quotient by
$\CR_{\leq\lambda}^d$, so that
$\CR_{\leq\lambda}=\lim\limits_{\gets}\CR_{\leq\lambda}^d$.
For fixed $\lambda,d$, and $e\gg0$, the action of
$G_\CO$ on $\CR_{\leq\lambda}^d$
factors through the action of $G_{\CO/z^e\CO}$. Finally,
\begin{equation*}
H_\bullet^{G_\CO}(\CR_{\leq\lambda}):=H^{-\bullet}_{G_{\CO/z^e\CO}}(\CR_{\leq\lambda}^d,
\bfomega_{\CR_{\leq\lambda}^d})[-2\dim(\bN_\CO/z^d\bN_\CO)].
\end{equation*}
The cohomological shift means that we are considering the ``renormalized''
Borel-Moore homology, i.e.\ the cohomology
$H^{-\bullet}_{G_\CO}(\CR,\bfomega_\CR[-2\dim\bN_\CO])$.

The $G_\CO$-equivariant Borel-Moore homology
$H^{G_\CO}_\bullet(\CR_{G,\bN})$ forms an associative
algebra with respect to the following convolution operation.
We consider the diagram
\begin{equation}\label{3.2}
    \begin{CD}
        \CR \times\CR @<\tilde p<< p^{-1}(\CR\times\CR)
        @>\tilde q>> q(p^{-1}(\CR\times\CR))
        @>\tilde m>> \CR
        \\
        @V{i\times\Id_\CR}VV @V{i'}VV @VVV @VV{i}V
        \\
        \CT\times \CR @<p<< G_\CK\times\CR @>q>>
        G_\CK\stackrel{G_\CO}{\times}\CR @>m>> \CT,
    \end{CD}
\end{equation}
Here $\CT:=G_\CK\stackrel{G_\CO}{\times}\bN_\CO$, and we have an embedding
$\CT\hookrightarrow\Gr_G\times\bN_\CK$ such that
$\CR=\CT\cap(\Gr_G\times\bN_\CO)$. The embedding $\CR\hookrightarrow\CT$
is denoted by $i$. The maps in the lower row are given by
\begin{equation*}
  \bigl(g_1,[g_2,s]\bigr)  \stackrel{q}{\mapsto}
  \bigl[g_1, [g_2,s]\bigr] \stackrel{m}{\mapsto} [g_1g_2, s],\
  \bigl(g_1,[g_2,s]\bigr)  \stackrel{p}{\mapsto}
  \bigl([g_1,g_2s], [g_2,s]\bigr),
  \end{equation*}
and all the squares are cartesian (i.e.\ the upper row consists of closed subvarietes
in the lower row, and all the maps in the upper row are induced by the corresponding
maps in the lower row). We have the following group actions on the terms of the
lower row preserving the closed subvarieties in the upper row:
\begin{equation*}
    \begin{split}
        G_\CO \times G_\CO\curvearrowright
        \CT\times\CR ;\ &
        (g,h)\cdot \left([g_1,s_1],[g_2,s_2]\right)
        = \left([gg_1, s_1], [hg_2,s_2] \right),
        \\
        G_\CO \times G_\CO \curvearrowright G_\CK\times\CR ;\ &
        (g,h)\cdot \left(g_1, [g_2,s]\right)
        = \left(gg_1h^{-1}, [hg_2, s]\right),
        \\
       G_\CO \curvearrowright G_\CK\stackrel{G_\CO}{\times}\CR ;\ &
        g\cdot \left[g_1, [g_2,s]\right] = \left[gg_1, [g_2, s]\right],
        \\
        G_\CO \curvearrowright \CT ;\ &
        g\cdot [g_1,s] = [gg_1, s].
    \end{split}
\end{equation*}
The morphisms $p,q,m$ (and hence $\tilde{p},\tilde{q},\tilde{m}$)
are equivariant, where we take the first projection
$\on{pr}_1\colon G_\CO\times G_\CO\to G_\CO$ for $q$.

Finally, given two equivariant Borel-Moore homology classes
$c_1,c_2\in H_\bullet^{G_\CO}(\CR)$, we define their convolution product
$c_1*c_2:=\tilde{m}_*(\tilde{q}{}^*)^{-1}\tilde{p}{}^*(c_1\otimes c_2)$.

This algebra is commutative,
finitely generated and integral, and its spectrum
$\CM_C(G,\bN)=\on{Spec}H^{G_\CO}_\bullet(\CR_{G,\bN})$ is an irreducible
normal affine variety of dimension $2\on{rk}(G)$, the {\em Coulomb branch}.
It is supposed to be a (singular) hyper-K\"ahler manifold~\cite{sw}.

Let $T\subset G$ be a Cartan torus with Lie algebra $\ft\subset\fg$.
Let $W=N_G(T)/T$ be the corresponding Weyl group. Then the equivariant
cohomology $H^\bullet_{G_\CO}(\on{pt})=\BC[\ft/W]$ forms a subalgebra of
$H^{G_\CO}_\bullet(\CR_{G,\bN})$ (a {\em Cartan subalgebra}), so we have a
projection $\varPi\colon \CM_C(G,\bN)\to\ft/W$.

\subsubsection{Example}
For the adjoint representation $\bN=\fg$ considered in~\ref{steinberg}, we
get $\CM_C(G,\fg)=(T^\vee\times\ft)/W$. For the trivial representation, we
get $\CM_C(G,0)=\fZ_{\gvee}^{G^\vee}=\{(g\in G^\vee,\ \xi\in\Sigma) :
\on{Ad}_g\xi=\xi\}$, the universal centralizer of the dual group.

\bigskip

Finally, the algebra $H^{G_\CO}_\bullet(\CR_{G,\bN})$ comes equipped with
quantization: a $\BC[\hbar]$-deformation
$\BC_\hbar[\CM_C(G,\bN)]=H^{G_\CO\rtimes\BC^\times}_\bullet(\CR_{G,\bN})$
where $\BC^\times$ acts by loop rotations, and
$\BC[\hbar]=H^\bullet_{\BC^\times}(\on{pt})$. It gives rise to a Poisson bracket
on $\BC[\CM_C(G,\bN)]$ with an open symplectic leaf, so that $\varPi$
becomes an integrable system: $\BC[\ft/W]\subset\BC[\CM_C(G,\bN)]$ is a
Poisson-commutative polynomial subalgebra with $\on{rk}(G)$ generators.

\subsection{Monopole formula}
\label{monopole}
Recall that $\CR_{G,\bN}$
is a union of (profinite dimensional) vector bundles over $G_\CO$-orbits
in $\Gr_G$. The corresponding Cousin spectral sequence converging to
$H_\bullet^{G_\CO}(\CR_{G,\bN})$ degenerates and allows to compute the
equivariant Poincar\'e polynomial (or rather Hilbert series)
\begin{equation}
\label{mono}
P_t^{G_\CO}(\CR_{G,\bN})=
\sum_{\theta\in\Lambda^+}t^{d_\theta-2\langle\rhovee,\theta\rangle}P_G(t;\theta).
\end{equation}
Here $\deg(t)=2,\ P_G(t;\theta)=\prod(1-t^{d_i})^{-1}$ is the Hilbert series
of the equivariant cohomology $H^\bullet_{\on{Stab}_G(\theta)}(\on{pt})\ (d_i$
are the degrees of generators of the ring of $\on{Stab}_G(\theta)$-invariant
functions on its Lie algebra), and $d_\theta=
\sum_{\chi\in\Lambda^\vee_G}\on{max}(-\langle\chi,\theta\rangle,0)\dim\bN_\chi$.
This is a slight variation of the {\em monopole formula} of~\cite{chz}.
Note that the series~(\ref{mono}) may well diverge (even as a formal Laurent
series: the space of homology of given degree may be infinite-dimensional),
e.g.\ this is always the case for unframed quiver gauge theories.
To ensure its convergence (as a formal Taylor series with the constant term~1)
one has to impose the so called `good' or `ugly' assumption on the theory.
In this case the resulting $\BN$-grading on $H_\bullet^{G_\CO}(\CR_{G,\bN})$
gives rise to a $\BC^\times$-action on $\CM_C(G,\bN)$, making it a conical
variety with a single (attracting) fixed point.

\subsection{Flavor symmetry}
\label{flavor}
Suppose we have an extension $1\to G\to\tilde G\to G_F\to1$ where $G_F$
is a connected reductive group (a {\em flavor group}), and the action of
$G$ on $\bN$ is extended to an action of $\tilde G$. Then the action of
$G_\CO$ on $\CR_{G,\bN}$ extends to an action of $\tilde G_\CO$, and the
convolution product defines a commutative algebra structure on the equivariant
Borel-Moore homology $H^{\tilde G_\CO}_\bullet(\CR_{G,\bN})$. We have the
restriction homomorphism $H^{\tilde G_\CO}_\bullet(\CR_{G,\bN})\to
H^{G_\CO}_\bullet(\CR_{G,\bN})=H^{\tilde G_\CO}_\bullet(\CR_{G,\bN})
\otimes_{H^\bullet_{G_F}(\on{pt})}\BC$. In other words, $\ul\CM{}_C(G,\bN):=
\Spec H^{\tilde G_\CO}_\bullet(\CR_{G,\bN})$ is a deformation
of $\CM_C(G,\bN)$ over $\Spec H^\bullet_{G_F}(\on{pt})=\ft_F/W_F$.

We will need the following version of this construction. Let $\sZ\subset G_F$
be a torus embedded into the flavor group. We denote by $\tilde G^\sZ$
the pullback extension $1\to G\to\tilde G^\sZ\to\sZ\to1$. We define
$\ul\CM{}_C^\sZ(G,\bN):=\Spec H^{\tilde G^\sZ_\CO}_\bullet(\CR_{G,\bN})$:
a deformation of $\CM_C(G,\bN)$ over $\fz:=\Spec H^\bullet_\sZ(\on{pt})$.

Since $\CM_C(G,\bN)$ is supposed to be a hyper-K\"ahler manifold, its
flavor deformation should come together with a (partial) resolution.
To construct it, we consider the obvious projection
$\tilde\pi\colon \CR_{\tilde G,\bN}\to\Gr_{\tilde G}\to\Gr_{G_F}$.
Given a dominant coweight $\lambda_F\in\Lambda^+_F\subset\Gr_{G_F}$, we set
$\CR_{\tilde G,\bN}^{\lambda_F}:=\tilde\pi^{-1}(\lambda_F)$, and consider
the equivariant Borel-Moore homology
$H^{\tilde G{}^\sZ_\CO}_\bullet(\CR_{\tilde G,\bN}^{\lambda_F})$. It carries a convolution
module structure over $H^{\tilde G{}^\sZ_\CO}_\bullet(\CR_{G,\bN})$. We consider
$\wt{\ul\CM}{}_C^{\sZ,\lambda_F}(G,\bN):=
\Proj(\bigoplus_{n\in\BN}H^{\tilde G{}^\sZ_\CO}_\bullet(\CR_{\tilde G,\bN}^{n\lambda_F}))
\stackrel{\varpi}{\longrightarrow}\ul\CM{}_C^\sZ(G,\bN)$.
We denote $\varpi^{-1}(\CM_C(G,\bN))$ by $\wt\CM_C^{\lambda_F}(G,\bN)$.
We have $\wt\CM_C^{\lambda_F}(G,\bN)=
\Proj(\bigoplus_{n\in\BN}H^{G_\CO}_\bullet(\CR_{\tilde G,\bN}^{n\lambda_F}))$.

More generally, for a strictly convex (i.e.\ not containing nontrivial
subgroups) cone $\sV\subset\Lambda^+_F$, we consider the multi projective
spectra $\wt{\ul\CM}{}_C^{\sZ,\sV}(G,\bN):=
\Proj(\bigoplus_{\lambda_F\in\sV}H^{\tilde G{}^\sZ_\CO}_\bullet(\CR_{\tilde G,\bN}^{\lambda_F}))
\stackrel{\varpi}{\longrightarrow}\ul\CM{}_C^\sZ(G,\bN)$ and
$\wt\CM_C^\sV(G,\bN):=
\Proj(\bigoplus_{\lambda_F\in\sV}H^{G_\CO}_\bullet(\CR_{\tilde G,\bN}^{\lambda_F}))
\stackrel{\varpi}{\longrightarrow}\CM_C(G,\bN)$.

The following proposition is proved in~\cite{bfn2}.
\begin{prop}
\label{coulomb reduction}
Assume that the flavor group is a torus, i.e.\ we have an exact sequence
$1\to G\to\tilde G\to T_F\to1$. Then the Coulomb branch $\CM_C(G,\bN)$ is
the Hamiltonian reduction of $\CM_C(\tilde G,\bN)$ by the action of the
dual torus $T_F^\vee$.
\end{prop}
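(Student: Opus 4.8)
The plan is to identify both sides as graded commutative algebras, so I would first exhibit the $T_F^\vee$-action and the moment map on $\CM_C(\tG,\bN)$, and then run the standard recipe for Hamiltonian reduction by a torus.

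Put $\Lambda_F:=X_*(T_F)=X^*(T_F^\vee)$. The map $\CR_{\tG,\bN}\to\Gr_{\tG}\to\Gr_{T_F}=\Lambda_F$ is constant on connected components, and since the convolution diagram is compatible with addition in $\Lambda_F$, it makes $\BC[\CM_C(\tG,\bN)]=H^{\tG_\CO}_\bullet(\CR_{\tG,\bN})$ into a $\Lambda_F$-graded algebra; as $\Lambda_F=X^*(T_F^\vee)$, this grading is the same datum as an algebraic $T_F^\vee$-action on $\CM_C(\tG,\bN)$. On the other hand, composing the tautological structure map $H^\bullet_{\tG_\CO}(\pt)\to H^{\tG_\CO}_\bullet(\CR_{\tG,\bN})$ with the pullback $H^\bullet_{T_F}(\pt)\to H^\bullet_{\tG_\CO}(\pt)$ along $\tG\to T_F$ gives an embedding $\BC[\ft_F]=H^\bullet_{T_F}(\pt)\hookrightarrow\BC[\CM_C(\tG,\bN)]$ whose image lies in $\Lambda_F$-degree $0$ (it is the structure homomorphism, identified with $H^{\tG_\CO}_\bullet$ of the fibre over the base point of $\Gr_{\tG}$, which sits on the component labelled $0$), equivalently it is $T_F^\vee$-invariant; it thus defines a morphism $\mu\colon\CM_C(\tG,\bN)\to\ft_F=(\on{Lie}(T_F^\vee))^*$. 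The first — and, I expect, the only genuinely delicate — point is to check that $\mu$ is an honest moment map for this $T_F^\vee$-action: on degree-$2$ parts, for $h\in H^2_{T_F}(\pt)=\on{Lie}(T_F^\vee)$ and $x$ of $\Lambda_F$-weight $\gamma$ one must have $\{h,x\}=\langle\gamma,h\rangle x$. I would prove this by passing to the loop-rotation quantization $H^{\tG_\CO\rtimes\BC^\times}_\bullet(\CR_{\tG,\bN})$ and computing the commutator $h*x-x*h$ directly from the convolution diagram: the two orders of convolving with $h$ differ, on the $\Lambda_F$-degree-$\gamma$ component of $\CR_{\tG,\bN}$, by $\hbar\langle\gamma,h\rangle$ times the identity.

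The second ingredient is purely geometric. Since $\tG_\CO\twoheadrightarrow(T_F)_\CO$ and $G_\CK=\ker(\tG_\CK\to(T_F)_\CK)$, one has $\{g\in\tG_\CK:\bar g\in(T_F)_\CO\}=G_\CK\cdot\tG_\CO$, so the preimage of $0\in\Gr_{T_F}$ in $\Gr_{\tG}$ is exactly the image of the natural closed embedding $\Gr_G\hookrightarrow\Gr_{\tG}$. Hence the $\Lambda_F$-degree-$0$ part $\CR^0_{\tG,\bN}$ of $\CR_{\tG,\bN}$ is canonically $\CR_{G,\bN}$, carrying the flavor-extended $\tG_\CO$-action of~Section~\ref{flavor}, and the convolution on the degree-$0$ summand of $H^{\tG_\CO}_\bullet(\CR_{\tG,\bN})$ matches the one on $H^{\tG_\CO}_\bullet(\CR_{G,\bN})$. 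Therefore $\BC[\CM_C(\tG,\bN)]^{T_F^\vee}=H^{\tG_\CO}_\bullet(\CR_{G,\bN})=\BC[\ul{\CM}_C(G,\bN)]$, the flavor deformation, and under this identification $\mu$ becomes the deformation map $\ul{\CM}_C(G,\bN)\to\ft_F$ of~Section~\ref{flavor}.

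Finally I would assemble the pieces. Granting the moment-map identity, the standard description of reduction by a torus gives
\begin{equation*}
\BC[\CM_C(\tG,\bN)\tslash T_F^\vee]=\bigl(\BC[\CM_C(\tG,\bN)]\otimes_{\BC[\ft_F]}\BC\bigr)^{T_F^\vee},
\end{equation*}
with $\BC[\ft_F]\to\BC$ the evaluation at $0$. The $T_F^\vee$-action (equivalently, the $\Lambda_F$-grading) commutes with the $\BC[\ft_F]$-module structure (which lives in degree $0$), so I may interchange invariants and base change; combining this with the previous paragraph and then with the restriction isomorphism $H^{\tG_\CO}_\bullet(\CR_{G,\bN})\otimes_{\BC[\ft_F]}\BC=H^{G_\CO}_\bullet(\CR_{G,\bN})$ recalled in~Section~\ref{flavor}, the right-hand side becomes $H^{G_\CO}_\bullet(\CR_{G,\bN})=\BC[\CM_C(G,\bN)]$. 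Compatibility of the Poisson brackets then follows from the compatible $\hbar$-deformations $H^{\tG_\CO\rtimes\BC^\times}_\bullet$ on the two sides. (One should read the fibre of $\mu$ over $0$ in the derived sense, but the degeneration of the Cousin spectral sequence of~Section~\ref{monopole} shows $H^{\tG_\CO}_\bullet(\CR_{G,\bN})$ is free over $\BC[\ft_F]$, so the naive fibre already does the job.) The hard part is the moment-map identity of the second paragraph: the $T_F^\vee$-action comes for free from the grading, but identifying $\BC[\ft_F]\hookrightarrow\BC[\CM_C(\tG,\bN)]$ with the associated comoment map is where the work lies; everything after that is bookkeeping built on the two geometric inputs above.
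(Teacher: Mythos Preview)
The paper itself does not prove this proposition; it simply cites~\cite{bfn2}. Your outline is exactly the argument given there: the $T_F^\vee$-action arises from the $\Lambda_F$-grading via $\CR_{\tilde G,\bN}\to\Gr_{\tilde G}\to\Gr_{T_F}=\Lambda_F$, the comoment map is $H^\bullet_{T_F}(\pt)\hookrightarrow H^{\tilde G_\CO}_\bullet(\CR_{\tilde G,\bN})$, the preimage of $0\in\Gr_{T_F}$ identifies $\CR^0_{\tilde G,\bN}$ with $\CR_{G,\bN}$ carrying the extended $\tilde G_\CO$-equivariance, and the reduction then collapses to the restriction identity of Section~\ref{flavor}. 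Your singling out of the moment-map identity $\{h,x\}=\langle\gamma,h\rangle x$ as the only nontrivial step is accurate, and your proposed verification via the commutator in the loop-rotation quantization $H^{\tilde G_\CO\rtimes\BC^\times}_\bullet$ is precisely how it is done in~\cite{bfn2}.
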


\subsubsection{Example: toric hyper-K\"ahler manifolds}\label{toric-hyp}
Consider an exact sequence
\begin{equation*}
0\to\BZ^{d-n}\xrightarrow{\alpha}\BZ^d\xrightarrow{\beta}\BZ^n\to0
\end{equation*}
and the associated sequence
\begin{equation}
\label{4.11}
1\to G=(\BC^\times)^{d-n}\xrightarrow{\alpha}\tilde{G}=(\BC^\times)^d\xrightarrow{\beta}
T_F=(\BC^\times)^n\to1
\end{equation}
Let $\bN=\BC^d$ considered as a representation of $G$ via $\alpha$.
By~Proposition~\ref{coulomb reduction}, the Coulomb branch
$\CM_C(G,\bN)$ is the Hamiltonian reduction of $\CM_C((\BC^\times)^d,\BC^d)$ by
the action of $T_F^\vee$. It is easy to see that
$\CM_C((\BC^\times)^d,\BC^d)=\CM_C(\BC^\times,\BC)^d\simeq\BA^{2d}$,
and hence $\CM_C(G,\bN)$ is, by definition, the toric hyper-K\"ahler manifold
associated with the dual sequence of~(\ref{4.11})~\cite{bida}.

In particular, if $\bN$ is a 1-dimensional representation of $\BC^\times$
with the character $q^n$, then $\CM_C(\BC^\times,\bN)$ is the Kleinian surface
of type $A_{n-1}$ given by the equation $xy=w^n$. If $\bN$ is an $n$-dimensional
representation of $\BC^\times$ with the character $nq$, then the Coulomb branch
$\CM_C(\BC^\times,\bN)$ is the same Kleinian surface of type $A_{n-1}$.

\subsection{Ring objects in the derived Satake category}
\label{ring objects}
Let $\pi$ stand for the projection $\CR\to\Gr_G$. Then
$\CA^{\BC^\times}:=\pi_*\bfomega_\CR[-2\dim\bN_\CO]$ is an object of
$D_{G_\CO\rtimes\BC^\times}(\Gr_G)$, and
$H^\bullet_{G_\CO\rtimes\BC^\times}(\CR,\bfomega_\CR[-2\dim\bN_\CO])=
H^\bullet_{G_\CO\rtimes\BC^\times}(\Gr_G,\CA)$. One can equip $\CA^{\BC^\times}$ with a structure
of a ring object in $D_{G_\CO\rtimes\BC^\times}(\Gr_G)$ so that the resulting ring
structure on $H^\bullet_{G_\CO\rtimes\BC^\times}(\Gr_G,\CA^{\BC^\times})$ coincides with the ring
structure on $H_\bullet^{G_\CO\rtimes\BC^\times}(\CR)$ introduced
in~Section~\ref{general setup}. If we forget the loop rotation equivariance,
then the resulting ring object $\CA$ of $D_{G_\CO}(\Gr_G)$ is commutative.

Similarly, in the situation of~Section~\ref{flavor}, we denote
$\tilde\CR:=\CR(\tilde{G},\bN)$, and consider the composed projection
$\tilde\pi\colon \tilde\CR\to\Gr_{\tilde G}\to\Gr_{G_F}$. We define a ring object
$\CA_F^{\BC^\times}:=\on{Ind}_{\tilde{G}_\CO\rtimes\BC^\times}^{(G_F)_\CO\rtimes\BC^\times}
\tilde{\pi}_*\bfomega_{\tilde\CR}[-2\dim\bN_\CO]\in
D_{(G_F)_\CO\rtimes\BC^\times}(\Gr_{G_F})$, where
$\on{Ind}_{\tilde{G}_\CO\rtimes\BC^\times}^{(G_F)_\CO\rtimes\BC^\times}$
is the functor changing equivariance from $\tilde{G}_\CO\rtimes\BC^\times$
to $(G_F)_\CO\rtimes\BC^\times$. If we forget the loop rotation equivariance,
we obtain a commutative ring object $\CA_F\in D_{(G_F)_\CO}(\Gr_{G_F})$.
We will also need the fully equivariant ring object
$\tilde\CA{}_F^{\BC^\times}:=\tilde{\pi}_*\bfomega_{\tilde\CR}[-2\dim\bN_\CO]\in
D_{\tilde{G}_\CO\rtimes\BC^\times}(\Gr_{G_F})$.

The ring $\BC_\hbar[\CM_C(G,\bN)]$ is reconstructed from the ring object
$\tilde\CA{}_F^{\BC^\times}$ by the following procedure going back to~\cite{abg}.
For a flavor coweight $\lambda_F$ we denote by $i_{\lambda_F}$ the
embedding of a $T_F$-fixed point $\lambda_F$ into $\Gr_{G_F}$.
Then $\Ext^\bullet_{D_{\tilde{G}_\CO\rtimes\BC^\times}(\Gr_{G_F})}
({\mathbf 1}_{\Gr_{G_F}},\tilde\CA{}_F^{\BC^\times})=i_0^!\tilde\CA{}_F^{\BC^\times}\simeq
H^\bullet_{\tilde{G}_\CO\rtimes\BC^\times}(\CR,\bfomega_\CR[-2\dim\bN_\CO])$
by the base change. Given
$x,y\in\Ext^\bullet_{D_{\tilde{G}_\CO\rtimes\BC^\times}(\Gr_{G_F})}
({\mathbf 1}_{\Gr_{G_F}},\tilde\CA{}_F^{\BC^\times})$, we consider
$x\star y\in\Ext^\bullet_{D_{\tilde{G}_\CO\rtimes\BC^\times}(\Gr_{G_F})}
({\mathbf 1}_{\Gr_{G_F}}\star{\mathbf 1}_{\Gr_{G_F}},
\tilde\CA{}_F^{\BC^\times}\star\tilde\CA{}_F^{\BC^\times})$, and then apply the isomorphism
${\mathbf 1}_{\Gr_{G_F}}\simeq{\mathbf 1}_{\Gr_{G_F}}\star{\mathbf 1}_{\Gr_{G_F}}$
and the multiplication morphism
$\sfm\colon \tilde\CA{}_F^{\BC^\times}\star\tilde\CA{}_F^{\BC^\times}\to\tilde\CA{}_F^{\BC^\times}$
in order to obtain $\sfm(x\star y)\in
\Ext^\bullet_{D_{\tilde{G}_\CO\rtimes\BC^\times}(\Gr_{G_F})}
({\mathbf 1}_{\Gr_{G_F}},\tilde\CA{}_F^{\BC^\times})$. It is proved in~\cite{bfn4} that
the resulting ring structure on
$\Ext^\bullet_{D_{\tilde{G}_\CO\rtimes\BC^\times}(\Gr_{G_F})}
({\mathbf 1}_{\Gr_{G_F}},\tilde\CA{}_F^{\BC^\times})=H_\bullet^{\tilde{G}_\CO\rtimes\BC^\times}(\CR)$
induces the one introduced in~Section~\ref{general setup} on
$H_\bullet^{G_\CO\rtimes\BC^\times}(\CR)$. Moreover, a similar
construction defines a multiplication
$i_{\lambda_F}^!\bar\CA_F^{\BC^\times}\otimes i_{\mu_F}^!\bar\CA_F^{\BC^\times}\to
i_{\lambda_F+\mu_F}^!\bar\CA_F^{\BC^\times}$ for $\lambda_F,\mu_F\in\Lambda^+_F$.
Here $\bar\CA_F^{\BC^\times}=\on{Res}_{\tilde{G}_\CO\rtimes\BC^\times}^{G_\CO\rtimes\BC^\times}
\tilde\CA{}_F^{\BC^\times}$ is obtained from $\tilde\CA{}_F^{\BC^\times}$ applying
the functor restricting equivariance from
$\tilde{G}_\CO\rtimes\BC^\times$ to $G_\CO\rtimes\BC^\times$.
In particular, we get a module structure
$i_0^!\bar\CA_F^{\BC^\times}\otimes i_{\lambda_F}^!\bar\CA_F^{\BC^\times}\to
i_{\lambda_F}^!\bar\CA_F^{\BC^\times}$. Note that
$i_0^!\bar\CA_F^{\BC^\times}\simeq H_\bullet^{G_\CO\rtimes\BC^\times}(\CR)$.

\subsubsection{Example: The regular sheaf in type A}
Let $G=\GL(\BC^{N-1})\times\GL(\BC^{N-2})\times\ldots\times\GL(\BC^1),\
\tilde{G}=(G\times\GL(\BC^N))/Z$, where $Z\simeq\BC^\times$ is the diagonal central
subgroup. Hence $G_F=\on{PGL}(\BC^N)$. Furthermore,
$\bN=\Hom(\BC^N,\BC^{N-1})\oplus\Hom(\BC^{N-1},\BC^{N-2})\oplus\ldots\oplus
\Hom(\BC^2,\BC^1)$. It is proved in~\cite{bfn4} that
$\CA_F^{\BC^\times}$ is isomorphic to the regular sheaf
$\CA_R^{\BC^\times}\in D_{\on{PGL}(\BC^N)_\CO\rtimes\BC^\times}(\Gr_{\on{PGL}(\BC^N)})$
of~Section~\ref{regular sheaf}.

\subsection{Gluing construction}
Let $\CA^{\BC^\times}_1,\ldots,\CA^{\BC^\times}_n$ be the ring objects
in $D_{G_\CO\rtimes\BC^\times}(\Gr_G)$. We denote the ring objects of
$D_{G_\CO}(\Gr_G)$ obtained by forgetting the loop rotation equivariance
by $\CA_1,\ldots,\CA_n$. Let
$i_\Delta\colon \Gr_G\hookrightarrow\prod_{k=1}^n\Gr_G$ be the diagonal
embedding. The following proposition is proved in~\cite{bfn4}.

\begin{prop}
  $\CA^{\BC^\times}:=i^!_\Delta(\boxtimes\CA^{\BC^\times}_k)$ is a ring object in
  $D_{G_\CO\rtimes\BC^\times}(\Gr_G)$. If the ring objects $\CA_1,\ldots,\CA_n$
  are commutative, then $\CA:=i^!_\Delta(\boxtimes\CA_k)\in D_{G_\CO}(\Gr_G)$
  is a commutative ring object. In particular, the ring
  $H^\bullet_{G_\CO}(\Gr_G,\CA)$ is commutative.
\end{prop}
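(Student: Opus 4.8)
The plan is to present $\CA^{\BC^\times}$ (and $\CA$) as the image of a ring object under a lax (symmetric) monoidal functor, so that the ring structure is transported formally. First I would note that the convolution product on $\Gr_G^n=\prod_{k=1}^n\Gr_G$ is the external product of the $n$ convolution products on the factors; hence the external product of ring objects is again a ring object, and $\boxtimes_k\CA^{\BC^\times}_k$ is canonically a ring object in $D_{(G_\CO)^n\rtimes\BC^\times}(\Gr_G^n)$, commutative after forgetting the loop rotation whenever the $\CA_k$ are (here one uses that $\star$ on $D_{G_\CO}(\Gr_G)$ is symmetric monoidal via fusion). After restricting the equivariance along the diagonal $G_\CO\hookrightarrow(G_\CO)^n$ and the diagonal $\BC^\times$, the proposition reduces to the claim that the $!$-restriction $i_\Delta^!\colon D_{(G_\CO)^n\rtimes\BC^\times}(\Gr_G^n)\to D_{G_\CO\rtimes\BC^\times}(\Gr_G)$ along the closed diagonal embedding carries a natural lax monoidal structure for these two convolution products, which becomes lax \emph{symmetric} monoidal once the loop rotation is dropped.

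To build this structure I would compare the convolution diagram~(\ref{convolut}) for $\Gr_G$ with its $n$-fold analogue for $\Gr_G^n$, using the identification of the twisted square $\Gr_G^n\wt\times\Gr_G^n$ (for the group $G^n$) with $(\Gr_G\wt\times\Gr_G)^n$. The diagonal $i_\Delta$ and the induced diagonal $j_\Delta\colon\Gr_G\wt\times\Gr_G\hookrightarrow(\Gr_G\wt\times\Gr_G)^n$ fit into a commuting square with vertical maps $m$ and $m^{\times n}$. This square is \emph{not} cartesian; however, $m$ and $m^{\times n}$ are ind-proper (stratified semismallness of the convolution morphism, cf.~\lemref{exact}), so the unit $\on{id}\to(m^{\times n})^!(m^{\times n})_*$ of the $((m^{\times n})_!,(m^{\times n})^!)$-adjunction, together with $m_!=m_*$, still yields a canonical base-change map $m_*j_\Delta^!\to i_\Delta^!m^{\times n}_*$. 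Combined with the routine fact that $\wt\boxtimes$ commutes with $!$-restriction along $j_\Delta$ (which rests on $q$ being a $G_\CO$-torsor and $p$ smooth), this produces the lax structure map
\[
i_\Delta^!\CF\star i_\Delta^!\CG=m_*\bigl((i_\Delta^!\CF)\wt\boxtimes(i_\Delta^!\CG)\bigr)\simeq m_*j_\Delta^!(\CF\wt\boxtimes\CG)\longrightarrow i_\Delta^!m^{\times n}_*(\CF\wt\boxtimes\CG)=i_\Delta^!(\CF\star\CG),
\]
while the unit constraint $\mathbf{1}_{\Gr_G}\to i_\Delta^!\mathbf{1}_{\Gr_G^n}$ is even an isomorphism, since $\mathbf{1}_{\Gr_G^n}$ is the skyscraper at $(e,\dots,e)\in i_\Delta(\Gr_G)$ and $i_\Delta$ is a closed embedding.

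Granting that these maps assemble coherently, a lax (symmetric) monoidal functor sends (commutative) ring objects to (commutative) ring objects. Applied to $\boxtimes_k\CA^{\BC^\times}_k$ this makes $\CA^{\BC^\times}=i_\Delta^!(\boxtimes_k\CA^{\BC^\times}_k)$ a ring object, with multiplication the lax structure map followed by $i_\Delta^!$ of the K\"unneth isomorphism $(\boxtimes_k\CA^{\BC^\times}_k)\star(\boxtimes_k\CA^{\BC^\times}_k)\simeq\boxtimes_k(\CA^{\BC^\times}_k\star\CA^{\BC^\times}_k)$ and of $\boxtimes_k\sfm_k$; dropping the loop rotation and invoking commutativity of the $\CA_k$ makes $\CA=i_\Delta^!(\boxtimes_k\CA_k)$ a commutative ring object. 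Finally the equivariant cohomology functor $H^\bullet_{G_\CO}(\Gr_G,-)$ is lax symmetric monoidal for $\star$ (a tensor functor on the Satake category, and lax monoidal on all of $D_{G_\CO}(\Gr_G)$), so it carries the commutative ring object $\CA$ to a commutative ring $H^\bullet_{G_\CO}(\Gr_G,\CA)$.

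I expect the only genuine difficulty to be the construction and, above all, the coherence — associativity, unitality, symmetry — of the lax monoidal structure on $i_\Delta^!$: because the relevant squares are not cartesian one cannot quote plain base change, and one must verify that the natural transformations assembled from adjunction units and the ind-properness of the convolution morphisms genuinely form a lax (symmetric) monoidal functor. The attendant bookkeeping — cohomological shifts from $\bfomega$ and from the twisted products, and compatibility with the diagonal loop-rotation equivariance — is the remaining point to watch; the other ingredients (monoidality of $\boxtimes$, the identification of $i_\Delta^!\mathbf{1}_{\Gr_G^n}$, and lax monoidality of $H^\bullet_{G_\CO}(\Gr_G,-)$) are standard.
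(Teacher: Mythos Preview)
Your proposal is correct and follows essentially the same route as the paper: both build the multiplication by comparing the convolution diagrams for $\Gr_G$ and $\prod_k\Gr_G$, using smoothness of $p,q$ to commute $!$-restriction with the twisted box product, and the natural map $m_*j_\Delta^!\to i_\Delta^!(\prod_k m)_*$ coming from properness of $m$. Your packaging via a lax (symmetric) monoidal structure on $i_\Delta^!$ is a bit more explicit than the paper's terse argument---which neither spells out the unit nor the coherence you rightly flag---but the core construction is identical.
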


\begin{proof}
We have $\boxtimes\mathsf m\colon
(\boxtimes\CA_k)\star(\boxtimes\CA_k) = \boxtimes (\CA_k \star
\CA_k) \to \boxtimes \CA_k$ from $\mathsf m\colon
\CA_k\star\CA_k\to\CA_k$. Then we apply $i_\Delta^!$.
We claim that there is a natural homomorphism
\begin{equation*}
i_\Delta^!(\boxtimes\CA_k)\star i_\Delta^!(\boxtimes\CA_k)
\to i_\Delta^! \left(\boxtimes (\CA_k\star\CA_k)\right),
\end{equation*}
hence its composition with $i_\Delta^!(\boxtimes\mathsf m)$ gives the
desired multiplication homomorphism of $i_\Delta^!(\boxtimes\CA_k)$.
We prove the claim by comparing the convolution diagrams~(\ref{convolut})
for $\Gr_G$ and $\prod_k\Gr_G$. Since $p$, $q$ are smooth, $p^*$, $q^*$
commute with $i_\Delta^!$.
The last part of the convolution diagram for $G$ and $\prod_k G$ is
\begin{equation*}
  \begin{CD}
    \Gr_G\tilde\times\Gr_G @>m>>\Gr_G \\
    @V{i'_\Delta}VV @VV{i_\Delta}V \\
    \prod_k \Gr_G\tilde\times\Gr_G
    = \Gr_{\prod_k G}\tilde\times\Gr_{\prod_k G}
    @>>\prod_k m> \Gr_{\prod_k G} = \prod_k \Gr_G,
  \end{CD}
\end{equation*}
where we denote the diagonal embedding of the left column by
$i'_\Delta$ to distinguish it from the right column. Let
\(
\boxtimes (\CA_k \tilde\boxtimes \CA_k)
\)
denote the complex on
\(
\Gr_{\prod_k G}\tilde\times\Gr_{\prod_k G}
\)
obtained in the course of the convolution product for $\prod_k G$.
We define the homomorphism as
\begin{equation*}
\begin{aligned}
    m_* i_\Delta^{\prime !}(\boxtimes (\CA_k \tilde\boxtimes \CA_k))
    = m_* \bigotimes^! (\CA_k \tilde\boxtimes \CA_k)
    \to\\
    \bigotimes^! m_* (\CA_k \tilde\boxtimes \CA_k) =
    i_\Delta^{!}(\prod_k m)_* \boxtimes (\CA_k \tilde\boxtimes \CA_k)).
\end{aligned}
\end{equation*}
\end{proof}

Recall the regular sheaf $\CA_R^{\BC^\times}$ of~Section~\ref{regular sheaf}.
It is equipped with an action of $G^\vee\ltimes U_\hbar^{[]}$. Hence for any
ring object $\CA^{\BC^\times}\in D_{G_\CO\rtimes\BC^\times}(\Gr_G)$, the product
$\CA_R^{\BC^\times}\otimes^!\CA^{\BC^\times}$ is also equipped with an action of
$G^\vee\ltimes U_\hbar^{[]}$. The cohomology ring
$H^\bullet_{G_\CO\rtimes\BC^\times}(\Gr_G,\CA_R^{\BC^\times}\otimes^!\CA^{\BC^\times})$ is also
equipped with an action of $G^\vee\ltimes U_\hbar^{[]}$. The following proposition
is proved in~\cite{bfn4} (recall that the autoequivalence $\fC_{G^\vee}$ was
defined in~Section~\ref{dualities}):

\begin{prop}
  \label{fusion}
  For ring objects $\CA^{\BC^\times}_1,\CA^{\BC^\times}_2\in D_{G_\CO\rtimes\BC^\times}(\Gr_G)$,
  we have
  \begin{equation*}
  \begin{aligned}
    &H^\bullet_{G_\CO\rtimes\BC^\times}(\Gr_G,\CA_1^{\BC^\times}\otimes^!\CA_2^{\BC^\times})\simeq\\
    H^\bullet_{G_\CO\rtimes\BC^\times}(\Gr_G,\CA_R^{\BC^\times}&\otimes^!\CA_1^{\BC^\times})\otimes
    \fC_{G^\vee}H^\bullet_{G_\CO\rtimes\BC^\times}(\Gr_G,\CA_R^{\BC^\times}\otimes^!\CA_2^{\BC^\times})
    /\!\!/\!\!/\Delta_{G^\vee}
    \end{aligned}
  \end{equation*}
  (quantum Hamiltonian reduction). If the ring objects
  $\CA_1,\CA_2\in D_{G_\CO}(\Gr_G)$ obtained by forgetting the loop rotation
  equivariance are commutative, then we have a similar isomorphism of commutative
  rings:
    \begin{equation*}
    H^\bullet_{G_\CO}(\Gr_G,\CA_1\otimes^!\CA_2)\simeq
    H^\bullet_{G_\CO}(\Gr_G,\CA_R\otimes^!\CA_1)\otimes
    \fC_{G^\vee}H^\bullet_{G_\CO}(\Gr_G,\CA_R\otimes^!\CA_2)/\!\!/\!\!/\Delta_{G^\vee}
  \end{equation*}
\end{prop}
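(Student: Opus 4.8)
\emph{Strategy.} The plan is to transport the whole assertion to the Langlands dual spectral side through the derived geometric Satake equivalence $\Psi_\hbar$ of Theorem~\ref{nonfree}, where it becomes a statement about $G^\vee$-equivariant dg-algebras over $U^{[]}_\hbar$ and the triple slash $\tslash\Delta_{G^\vee}$ acquires its literal meaning as BRST (Chevalley--Eilenberg) reduction. I work in the $\BC^\times$-equivariant setting; the commutative statement is the specialization $\hbar=0$, with $\Psi_\hbar$ replaced by $\Psi_{qc}$ and $U^{[]}_\hbar$ by $\Sym^{[]}(\gvee)$, using flatness over $\BC[\hbar]$.

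\emph{The two factors on the right.} Put $B_i:=\Psi_\hbar^{-1}(\CA^{\BC^\times}_i)$; by monoidality of $\Psi_\hbar$ these are ring objects of $D^{G^\vee}(U^{[]}_\hbar)$, i.e.\ $G^\vee$-equivariant dg-algebras with a quantum moment map $U^{[]}_\hbar(\gvee)\to B_i$ satisfying the Harish--Chandra compatibility of Section~\ref{Harish}. By Lemma~\ref{3.14}(b) one has $H^\bullet_{G_\CO\rtimes\BC^\times}(\Gr_G,\CA_R^{\BC^\times}\otimes^!\CA^{\BC^\times}_i)=\Phi_\hbar(\CA^{\BC^\times}_i)\cong\fC_{G^\vee}(B_i)$ as ring objects; since $\fC_{G^\vee}$ is an involution, the right-hand side of the proposition is $\fC_{G^\vee}(B_1)\otimes_\BC B_2$ reduced by the diagonal $\Delta_{G^\vee}\subset G^\vee\times G^\vee$ with the summed quantum moment map $x\mapsto x\otimes1+1\otimes x$.

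\emph{The left-hand side.} The $\otimes^!$-product on $D_{G_\CO\rtimes\BC^\times}(\Gr_G)$ is $i_\Delta^!$ along the embedding $\Gr_G=\Gr_{\Delta G}\hookrightarrow\Gr_{G\times G}=\Gr_G\times\Gr_G$, and under derived Satake for $G\times G$ the external product $\CA^{\BC^\times}_1\boxtimes\CA^{\BC^\times}_2$ corresponds to $B_1\boxtimes B_2\in D^{G^\vee\times G^\vee}(U^{[]}_\hbar\otimes_{\BC[\hbar]}U^{[]}_\hbar)$. The main computation is then to evaluate $H^\bullet_{G_\CO\rtimes\BC^\times}(\Gr_G,i_\Delta^!(-))$ on the spectral side: $i_\Delta^!$ amounts to restricting $G^\vee\times G^\vee$-equivariance to the diagonal $G^\vee$ together with base change of $U^{[]}_\hbar\otimes U^{[]}_\hbar$-modules along the multiplication $U^{[]}_\hbar\otimes U^{[]}_\hbar\to U^{[]}_\hbar$ (up to a shift and a twist by the relative dualizing sheaf), and taking $H^\bullet_{G_\CO\rtimes\BC^\times}(\Gr_G,-)$ afterwards introduces the derived-invariants / Chevalley--Eilenberg complex for the diagonal $\gvee$. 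Together this is precisely the complex computing the quantum Hamiltonian reduction $\tslash\Delta_{G^\vee}$; the reconciliation with the $\CA_R^{\BC^\times}$-twists carried by $\Phi_\hbar$ on the right-hand side rests on the fact that $\Psi_\hbar^{-1}(\CA_R^{\BC^\times})=\CalD_\hbar^{[]}(G^\vee)$, the $\hbar$-differential operators on $G^\vee$ and hence the quantization of $T^*G^\vee$, is the unit for Hamiltonian reduction by $G^\vee$. The Chevalley-involution twists throughout are pinned down by the relation $\Psi_\hbar\circ\fC_{G^\vee}\circ D=\BD\circ\Psi_\hbar$ of Section~\ref{dualities} between Verdier duality and the $U^{[]}_\hbar$-duality $D$.

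\emph{Ring structures, and the main obstacle.} Finally one checks that the multiplication on $i_\Delta^!(\CA^{\BC^\times}_1\boxtimes\CA^{\BC^\times}_2)$ constructed in the preceding (gluing) Proposition --- via the natural transformation $i_\Delta^!(-)\star i_\Delta^!(-)\to i_\Delta^!(-\star-)$ there --- maps, under the identifications above, to the reduced product on $\fC_{G^\vee}(B_1)\otimes_\BC B_2\,\tslash\,\Delta_{G^\vee}$; then formality of the relevant $\RHom$-algebras (Theorems~\ref{free Satake} and~\ref{nonfree}, \cite{gins,befi}) upgrades the isomorphism of complexes to one of algebras, and $\hbar=0$ yields the commutative case. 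The hard part is the middle step: making precise how $i_\Delta^!$ and the external product $\boxtimes$ intertwine the derived Satake equivalences for $G$ and for $G\times G$, and bookkeeping the cohomological shifts, the $\fC_{G^\vee}$-twists, and --- crucially --- the loop-rotation ($\BC^\times$) equivariance, so that one arrives at the \emph{quantum} Hamiltonian reduction rather than only its classical limit.
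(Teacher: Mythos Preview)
Your overall strategy --- transport to the spectral side via derived Satake and recognise quantum Hamiltonian reduction there --- is exactly the paper's, and your identification of the two factors on the right via Lemma~\ref{3.14}(b) is correct. But the route to the left-hand side is different, and the paper's is substantially shorter.

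The paper never invokes Satake for $G\times G$ or attempts to spectralise $i_\Delta^!$. Instead it uses rigidity of the convolution monoidal structure on $D_{G_\CO\rtimes\BC^\times}(\Gr_G)$ to rewrite, in one line,
\[
H^\bullet_{G_\CO\rtimes\BC^\times}(\Gr_G,\CA^{\BC^\times}_1\otimes^!\CA^{\BC^\times}_2)
=\Ext^\bullet(\BD\CA^{\BC^\times}_1,\CA^{\BC^\times}_2)
=\Ext^\bullet({\mathbf 1}_{\Gr_G},\ {\mathcal C}_G\CA^{\BC^\times}_1\star\CA^{\BC^\times}_2),
\]
the second step being the standard identity $\Hom(X,Y)\simeq\Hom({\mathbf 1},X^\vee\star Y)$ together with the fact that the monoidal dual of $\BD\CA^{\BC^\times}_1$ in $(D_{G_\CO\rtimes\BC^\times}(\Gr_G),\star)$ is ${\mathcal C}_G\CA^{\BC^\times}_1$. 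Now $\Psi_\hbar$ is monoidal for $\star$, so this becomes
\[
\Ext^\bullet_{D^{G^\vee}(U_\hbar^{[]})}\!\left(U_\hbar^{[]},\ \fC_{G^\vee}B_1\otimes_{U_\hbar^{[]}}B_2\right)
=\Ext^\bullet_{D^{G^\vee}(U_\hbar^{[]})}\!\left(U_\hbar^{[]},\ \Phi_\hbar(\CA^{\BC^\times}_1)\otimes_{U_\hbar^{[]}}\fC_{G^\vee}\Phi_\hbar(\CA^{\BC^\times}_2)\right),
\]
which is visibly the Hamiltonian reduction $\big(\Phi_\hbar(\CA^{\BC^\times}_1)\otimes\fC_{G^\vee}\Phi_\hbar(\CA^{\BC^\times}_2)\big)\tslash\Delta_{G^\vee}$. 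That is the whole argument; the $\fC_{G^\vee}$-twist falls out automatically from $\Psi_\hbar^{-1}\circ{\mathcal C}_G=\fC_{G^\vee}\circ\Psi_\hbar^{-1}$ (Section~\ref{dualities}).

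What this buys: your ``hard part'' --- compatibility of $i_\Delta^!$ with the Satake equivalences for $G$ and $G\times G$, with all shifts and twists --- is completely bypassed. The rigidity trick converts $\otimes^!$ to $\star$, and $\star$ is precisely the structure $\Psi_\hbar$ already respects. Your approach is not wrong in principle, but the compatibility statement you would need is itself a nontrivial result not supplied in the paper, so as written your argument has a genuine gap at exactly the point you flag.
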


\begin{proof}
  By rigidity, we have
  \begin{multline*}
    H^\bullet_{G_\CO\rtimes\BC^\times}(\Gr_G,\CA^{\BC^\times}_1\otimes^!\CA^{\BC^\times}_2)=
    \Ext^\bullet_{D_{G_\CO\rtimes\BC^\times}(\Gr_G)}
    (\BD\CA^{\BC^\times}_1,\CA^{\BC^\times}_2)\\
   = \Ext^\bullet_{D_{G_\CO\rtimes\BC^\times}(\Gr_G)}
   ({\mathbf 1}_{\Gr_G},{\mathcal C}_G\CA^{\BC^\times}_1\star\CA^{\BC^\times}_2)=\\
   \Ext^\bullet_{D^{G^\vee}(U_\hbar^{[]})}\left(U_\hbar^{[]},
   \fC_{G^\vee}\Psi_\hbar^{-1}(\CA^{\BC^\times}_1)\otimes_{U_\hbar^{[]}}\Psi_\hbar^{-1}(\CA^{\BC^\times}_2)\right)\\
   =\Ext^\bullet_{D^{G^\vee}(U_\hbar^{[]})}\left(U_\hbar^{[]},
   \Phi_\hbar(\CA^{\BC^\times}_1)\otimes_{U_\hbar^{[]}}\fC_{G^\vee}\Phi_\hbar(\CA^{\BC^\times}_2)\right),
 \end{multline*}
  (the last equality is~Lemma~\ref{3.14}(b)).
Now it is easy to see that $\Ext^\bullet_{D^{G^\vee}(U_\hbar^{[]})}\left(U_\hbar^{[]},
   \Phi_\hbar(\CA^{\BC^\times}_1)\otimes_{U_\hbar^{[]}}\fC_{G^\vee}\Phi_\hbar(\CA^{\BC^\times}_2)\right)$
   is the hamiltonian reduction $(\Phi_\hbar(\CA^{\BC^\times}_1)\otimes\fC_{G^\vee}
   \Phi_\hbar(\CA^{\BC^\times}_2))/\!\!/\!\!/\Delta_{G^\vee}$ of
   $\Phi_\hbar(\CA^{\BC^\times}_1)\otimes\fC_{G^\vee}\Phi_\hbar(\CA^{\BC^\times}_2)$ with respect
   to the diagonal action of $G^\vee$. Finally, according to~Lemma~\ref{3.14},
   $H^\bullet_{G_\CO\rtimes\BC^\times}(\Gr_G,\CA^{\BC^\times}_R\otimes^!\CA^{\BC^\times}_{1,2})=
   \Phi_\hbar(\CA^{\BC^\times}_{1,2})$.
\end{proof}

\subsection{Higgs branches of Sicilian theories}
\label{sicilian}
We denote $i_\Delta^!(\CA_R^{\boxtimes b})$ by $\CA^b\in D_{G_\CO}(\Gr_G)$.
It is equipped with an action of $b$ copies of $G^\vee\ltimes U_\hbar^{[]}$.
We denote by $\CB\in D_{G_\CO}(\Gr_G)$ the quantum hamiltonian reduction
of $\CA^2$ by the diagonal action $G^\vee$. We expect that $\CB$ is isomorphic to
$\pi_*\bfomega_{\CR_{G,\fg}}[-2\dim\fg_\CO]$ (see~Section~\ref{ring objects}
and~Example~\ref{steinberg}).
Finally, we set
$\CB^g:=i_\Delta^!(\CB^{\boxtimes g})$. Then $\CA^b\otimes^!\CB^g$ is a commutative
ring object of $D_{G_\CO}(\Gr_G)$, and its equivariant cohomology is a
commutative ring. We denote by $W^{g,b}_G$ its spectrum
$\on{Spec}H^\bullet_{G_\CO}(\Gr_G,\CA^b\otimes^!\CB^g)$. It is a Poisson
variety equipped with an action of $(G^\vee)^b$, the conjectural Higgs
branch of a Sicilian theory.

Recall that according to~\cite{mt}, there is
a conjectural functor from the category of 2-bordisms to a category HS of
holomorphic symplectic varieties with Hamiltonian group actions. The objects
of HS are complex algebraic semisimple groups. A morphism from $G$ to $G'$
is a holomorphic symplectic variety $X$ with a $\BC^\times$-action scaling
the symplectic form with weight 2, together with hamiltonian $G\times G'$-action
commuting with the $\BC^\times$-action. For $X\in\on{Mor}(G',G),\ Y\in\on{Mor}(G,G'')$,
the composition $Y\circ X\in\on{Mor}(G',G'')$ is given by the symplectic reduction
of $Y\times X$ by the diagonal $G$-action. The identity morphism in $\on{Mor}(G,G)$
is the cotangent bundle $T^*G$ with the left and right action of $G$.

To a complex semisimple group $G$ and a Riemann surface with boundary,
physicists associate a $3d$ Sicilian theory and consider its Higgs branch.
It depends only on the topology of the Riemann surface, and gives a functor
as above. Such a functor satisfying most of expected properties was constructed
recently in~\cite{gk}. It follows from~Proposition~\ref{fusion} that the above
$W^{g,b}_G$ is associated to the group $G^\vee$ and Riemann surface of genus $g$
with $b$ boundary components. It is also proved in~\cite{bfn4} that
$W^{0,3}_{\on{PGL}(2)}\simeq\BC^2\otimes\BC^2\otimes\BC^2$, and
$W^{0,3}_{\on{PGL}(3)}$ is the minimal nilpotent orbit of $E_6$,
while $W^{1,1}_{\on{PGL}(3)}$ is the subregular nilpotent orbit of $G_2$,
as expected by physicists.

\section{Coulomb branches of $3d$ quiver gauge theories}\label{quiver}

\subsection{Quiver gauge theories}
\label{quiver gauge}
Let $Q$ be a quiver with $Q_0$ the set of vertices, and $Q_1$ the set of arrows.
An arrow $e\in Q_1$ goes from its tail $t(e)\in Q_0$ to its head $h(e)\in Q_0$.
We choose a
$Q_0$-graded vector spaces $V:=\bigoplus_{j\in Q_0}V_j$ and
$W:=\bigoplus_{j\in Q_0}W_j$. We set $\sG=\GL(V):=\prod_{j\in Q_0}\GL(V_j)$.
We choose a second grading
$W=\bigoplus_{s=1}^NW^{(s)}$ compatible with the $Q_0$-grading of $W$.
We set $\sG_F$ to be a Levi subgroup $\prod_{s=1}^N\prod_{j\in Q_0}\GL(W^{(s)}_j)$
of $\GL(W)$, and $\tilde\sG:=\sG\times\sG_F$.

\medskip
\noindent
{\bf Remark.} $\sG$ will be the gauge group in this section. We denote it by $\sG$ since we want to use the notation $G$ for some other group.

\medskip
\noindent
Finally, we define a central
subgroup $\sZ\subset\sG_F$ as follows: $\sZ:=\prod_{s=1}^N\Delta_{\BC^\times}^{(s)}
\subset\prod_{s=1}^N\prod_{j\in Q_0}\GL(W^{(s)}_j)$, where
$\BC^\times\cong\Delta_{\BC^\times}^{(s)}\subset\prod_{j\in Q_0}\GL(W^{(s)}_j)$ is the
diagonally embedded subgroup of scalar matrices.
The reductive group $\tilde\sG$ acts naturally on $\bN:=
\bigoplus_{e\in Q_1}\Hom(V_{t(e)},V_{h(e)})\oplus\bigoplus_{j\in Q_0}\Hom(W_j,V_j)$.

The Higgs branch of the corresponding quiver gauge theory is the Nakajima
quiver variety $\CM_H(\sG,\bN)=\fM(V,W)$. We are interested in the Coulomb
branch $\CM_C(\sG,\bN)$.

\subsection{Generalized slices in an affine Grassmannian}
\label{general}
Recall the slices $\ol\CW{}_\mu^\lambda$ defined in~Section~\ref{overview}
for domimant $\mu$.
For arbitrary $\mu$ we consider the moduli space $\ol\CW{}_\mu^\lambda$ of
the following data:

\textup{(a)} A $G$-bundle $\CP$ on $\BP^1$.

\textup{(b)} A trivialization $\sigma\colon \CP_{\on{triv}}|_{\BP^1\setminus\{0\}}
\iso\CP|_{\BP^1\setminus\{0\}}$ having a pole of degree $\leq\lambda$ at $0\in\BP^1$
(that is defining a point of $\ol\Gr{}_G^\lambda$).

\textup{(c)} A $B$-structure $\phi$ on $\CP$ of degree $w_0\mu$ with the
fiber $B_-\subset G$ at $\infty\in\BP^1$ (with respect to the trivialization
$\sigma$ of $\CP$ at $\infty\in\BP^1$). Here $G\supset B_-\supset T$ is the
Borel subgroup opposite to $B$, and $w_0\in W$ is the longest element.

This construction goes back to~\cite{fm}. The space $\ol\CW{}_\mu^\lambda$ is
nonempty iff $\mu\leq\lambda$. In this case it is
an irreducible affine normal Cohen-Macaulay variety of dimension
$\langle2\rho^{\!\scriptscriptstyle\vee},\lambda-\mu\rangle$, see~\cite{bfn3}.
In case $\mu$ is dominant, the two definitions of $\ol\CW{}_\mu^\lambda$
agree. At the other extreme, if $\lambda=0$, then $\ol\CW{}_{-\alpha}^0$ is
nothing but the open zastava space $\oZ^{-w_0\alpha}$.
The $T$-fixed point set $(\ol\CW{}_\mu^\lambda)^T$ is nonempty iff the weight
space $V^\lambda_\mu$ is not 0; in this case $(\ol\CW{}_\mu^\lambda)^T$
consists of a single point denoted $\mu$.

\subsection{Beilinson-Drinfeld slices}
\label{beilinson}
Let $\ul\lambda=(\lambda_1,\ldots,\lambda_N)$ be a collection of dominant
coweights of $G$. We consider the moduli space $\ul{\ol\CW}{}_\mu^{\ul\lambda}$ of
the following data:

\textup{(a)} A collection of points $(z_1,\ldots,z_N)\in\BA^N$ on the
affine line $\BA^1\subset\BP^1$.

\textup{(b)} A $G$-bundle $\CP$ on $\BP^1$.

\textup{(c)} A trivialization
$\sigma\colon \CP_{\on{triv}}|_{\BP^1\setminus\{z_1,\ldots,z_N\}}
\iso\CP|_{\BP^1\setminus\{z_1,\ldots,z_N\}}$
with a pole of degree $\leq\sum_{s=1}^N\lambda_s\cdot z_s$ on the complement.

\textup{(d)} A $B$-structure $\phi$ on $\CP$ of degree $w_0\mu$ with the
fiber $B_-\subset G$ at $\infty\in\BP^1$ (with respect to the trivialization
$\sigma$ of $\CP$ at $\infty\in\BP^1$).

$\ul{\ol\CW}{}_\mu^{\ul\lambda}$ is nonempty iff
$\mu\leq\lambda:=\sum_{s=1}^N\lambda_s$. In this case it is an irreducible
affine normal Cohen-Macaulay variety flat over $\BA^N$ of relative dimension
$\langle2\rho^{\!\scriptscriptstyle\vee},\lambda-\mu\rangle$, see~\cite{bfn3}.
The fiber over $N\cdot0\in\BA^N$ is nothing but $\ol\CW{}_\mu^\lambda$.

\subsection{Convolution diagram over slices}
\label{convolution}
In the setup of~Section~\ref{beilinson} we consider the moduli space
$\ul{\wt\CW}{}_\mu^{\ul\lambda}$ of the following data:

\textup{(a)} A collection of points $(z_1,\ldots,z_N)\in\BA^N$ on the
affine line $\BA^1\subset\BP^1$.

\textup{(b)} A collection of $G$-bundles $(\CP_1,\ldots,\CP_N)$ on $\BP^1$.

\textup{(c)} A collection of isomorphisms
$\sigma_s\colon \CP_{s-1}|_{\BP^1\setminus\{z_s\}}
\iso\CP_s|_{\BP^1\setminus\{z_s\}}$
with a pole of degree $\leq\lambda_s$ at $z_s$. Here $1\leq s\leq N$,
and $\CP_0:=\CP_{\on{triv}}$.

\textup{(d)} A $B$-structure $\phi$ on $\CP_N$ of degree $w_0\mu$ with the
fiber $B_-\subset G$ at $\infty\in\BP^1$ (with respect to the trivialization
$\sigma_N\circ\ldots\circ\sigma_1$ of $\CP_N$ at $\infty\in\BP^1$).

A natural projection
$\varpi\colon \ul{\wt\CW}{}_\mu^{\ul\lambda}\to\ul{\ol\CW}{}_\mu^{\ul\lambda}$
sends $(\CP_1,\ldots,\CP_N,\sigma_1,\ldots,\sigma_N)$ to
$(\CP_N,\sigma_N\circ\ldots\circ\sigma_1)$. We denote
$\varpi^{-1}(\ol\CW{}_\mu^\lambda)$ by $\wt\CW{}_\mu^{\ul\lambda}$. Then
we expect that $\varpi\colon \wt\CW{}_\mu^{\ul\lambda}\to\ol\CW{}_\mu^\lambda$
is stratified semismall.

\subsection{Slices as Coulomb branches}
\label{back}
Let now $G$ be an adjoint simple simply laced algebraic group. We choose
an orientation $\Omega$ of its Dynkin graph (of type $ADE$), and denote by
$I$ its set of vertices. Given an $I$-graded vector space $W$ we encode its
dimension by a dominant coweight
$\lambda:=\sum_{i\in I}\dim(W_i)\omega_i\in\Lambda^+$ of $G$.
Given an $I$-graded vector space $V$ we encode its dimension by a positive
coroot combination $\alpha:=\sum_{i\in I}\dim(V_i)\alpha_i\in\Lambda_+$.
We set $\mu:=\lambda-\alpha\in\Lambda$. Given a direct sum decomposition
$W=\bigoplus_{s=1}^NW^{(s)}$ compatible with the $I$-grading of $W$ as
in~Section~\ref{quiver gauge}, we set
$\lambda_s:=\sum_{i\in I}\dim(W_i^{(s)})\omega_i\in\Lambda^+$, and
finally, $\ul\lambda:=(\lambda_1,\ldots,\lambda_N)$.

Recall the notations of~Section~\ref{flavor}. Since the flavor group
$\sG_F$ is a Levi subgroup of $\GL(W)$, its weight lattice is naturally
identified with $\BZ^{\dim W}$. More precisely, we choose a basis
$w_1,\ldots,w_{\dim W}$ of $W$ such that any $W_i,\ i\in I$, and
$W^{(s)},\ 1\leq s\leq N$, is spanned by a subset of the basis, and we assume
the following monotonicity condition: if for $1\leq a<b<c\leq\dim W$ we have
$w_a,w_b\in W^{(s)}$ for certain $s$, then $w_b\in W^{(s)}$ as well.
We define a strictly convex cone
$\sV=\{(n_1,\ldots,n_{\dim W})\}\subset\Lambda^+_F\subset\BZ^{\dim W}$
by the following conditions: (a) if $w_k\in W^{(s)},\ w_l\in W^{(t)}$,
and $s<t$, then $n_k\geq n_l\geq0$; (b) if $w_k,w_l\in W^{(s)}$, then $n_k=n_l$.
The following theorem is proved in~\cite{bfn3,bfn5} by the fixed point localization and
reduction to calculations in rank 1:
\begin{thm}
  We have isomorphisms
\begin{equation*}
  \ol\CW{}_\mu^\lambda\iso\CM_C(\sG,\bN),\
  \ul{\ol\CW}{}_\mu^{\ul\lambda}\iso\ul\CM{}_C^\sZ(\sG,\bN),\
  \ul{\wt\CW}{}_\mu^{\ul\lambda}\iso\wt{\ul\CM}{}_C^{\sZ,\sV}(\sG,\bN),\
\wt\CW{}_\mu^{\ul\lambda}\iso\wt\CM{}_C^\sV(\sG,\bN).
\end{equation*}
\end{thm}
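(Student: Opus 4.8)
The plan is to reduce everything to the single-point isomorphism $\ol\CW{}_\mu^\lambda\iso\CM_C(\sG,\bN)$ and then to propagate it over the configuration space $\BA^N=\{(z_1,\dots,z_N)\}$ in order to get the three relative statements. For that propagation, recall from Section~\ref{flavor} that the flavor torus $\sZ=\prod_{s=1}^N\Delta_{\BC^\times}^{(s)}$ makes $\ul\CM{}_C^\sZ(\sG,\bN)$ a deformation of $\CM_C(\sG,\bN)$ over $\fz=\Spec H^\bullet_\sZ(\on{pt})\simeq\BA^N$; one identifies the equivariant parameter of $\Delta_{\BC^\times}^{(s)}$ with the coordinate $z_s$, so that the fibre over $0$ is the single-point object on each side. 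Over the open locus of pairwise-distinct $z_s$ both the quiver data (with its splitting $W=\bigoplus_s W^{(s)}$) and the modification datum $\sum_s\lambda_s\cdot z_s$ factorize into $N$ independent pieces, where the isomorphism is the external product of single-point ones; since both total spaces are flat over $\BA^N$ with normal (indeed Cohen--Macaulay) total space, it extends across the diagonals. The convolution versions $\wt{\ul\CM}{}_C^{\sZ,\sV}$ and $\wt\CM{}_C^\sV$ are then matched with $\ul{\wt\CW}{}_\mu^{\ul\lambda}$ and $\wt\CW{}_\mu^{\ul\lambda}$ by checking that the strictly convex cone $\sV$ and the monotonicity condition on the chosen basis of $W$ are exactly the combinatorial data recording the iterated Hecke modifications $(\CP_1,\dots,\CP_N;\sigma_1,\dots,\sigma_N)$ with $\CP_0:=\CP_{\on{triv}}$, so that the two $\Proj$'s of $\sV$-graded algebras coincide.

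For the core statement I would use the \emph{abelianization} of~\cite{bfn2} together with the rank-one reduction that the theorem itself alludes to. Restricting the BFN construction from $\sG=\GL(V)$ to its maximal torus $\sT$ produces an injective homomorphism $\BC[\CM_C(\sG,\bN)]\hookrightarrow\BC[\CM_C(\sT,\bN)][(\text{roots})^{-1}]$, and the abelian Coulomb branch $\CM_C(\sT,\bN)$ is explicitly computable as a product of rank-one pieces, one per coordinate $\BC^\times$ of $\sT$. On the geometric side, a generic cocharacter of $T^\vee$ gives an attractor decomposition of $\ol\CW{}_\mu^\lambda$ whose ``Cartan part'' provides a matching injection of $\BC[\ol\CW{}_\mu^\lambda]$ into a localization of the same product. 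The task is then to exhibit compatible systems of generators on the two sides — on $\ol\CW{}_\mu^\lambda$ the Chevalley- and Cartan-type generators coming from the (truncated, shifted) Yangian-type structure going back to~\cite{fm}, and on $\CM_C$ the tautological classes $z_{i,r}\in H^\bullet_{\sG_\CO}(\on{pt})$ together with the monopole operators of minuscule magnetic charge $\pm\varpi_i$ (possibly dressed by polynomials) — and then to verify that the defining relations among the former hold among the latter. Each such relation is polynomial, hence it suffices to check it after restriction to the $\sT$-fixed locus, i.e.\ after abelianization.

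This reduces all relations to computations for $\sG=\GL_1$ acting on some $\BC^\ell$, where $\CM_C(\GL_1,\BC^\ell)$ is the explicit surface $xy=w^\ell$ of Example~\ref{toric-hyp} — equivalently a minuscule $\on{PGL}(2)$-slice $\ol\CW{}_\mu^\lambda$ — whose coordinate ring is generated by two monopole operators and the equivariant parameter, with $\BC^\times\times T_F$-equivariant Hilbert series given directly by the monopole formula~(\ref{mono}). Comparing that Hilbert series with the character of $\BC[\ol\CW{}_\mu^\lambda]$ (read off from the $T$-weights of the corresponding Yangian module, or from the Beilinson--Drinfeld stratification of the slice) fixes the rank-one isomorphism, hence all relations, hence a homomorphism of algebras in general. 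A final Hilbert-series comparison — the monopole formula~(\ref{mono}) on one side, the dimension $\langle 2\rho^\vee,\lambda-\mu\rangle$ together with the normality and Cohen--Macaulayness of $\ol\CW{}_\mu^\lambda$ established in~\cite{bfn3} on the other — forces the homomorphism to be an isomorphism. Tracking the loop-rotation $\BC^\times$-equivariance (the deformation parameter $\hbar$) through the same argument upgrades this to the quantized Coulomb branch and to compatibility with the integrable system $\varPi$.

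The step I expect to be the genuine obstacle is the ``no extra functions'' half of the abelianization: showing that the two injective maps above have the \emph{same} image inside the rank-one product, not merely that one contains the other. On the Coulomb side this is the regularity statement that $H^{\sG_\CO}_\bullet(\CR_{\sG,\bN})$ is generated by the $z_{i,r}$ and the minuscule dressed monopole operators — a claim about which equivariant classes extend across the boundary strata of $\Gr_\sG$ — and on the slices it is the precise description of regular functions on $\ol\CW{}_\mu^\lambda$; making these two descriptions literally coincide is where the work concentrates, and it is here that the rank-one computation is really doing its job. A secondary obstacle is the last isomorphism $\wt\CW{}_\mu^{\ul\lambda}\iso\wt\CM{}_C^\sV(\sG,\bN)$: the clean comparison of the two $\Proj$'s wants the stratified semismallness of $\varpi\colon\wt\CW{}_\mu^{\ul\lambda}\to\ol\CW{}_\mu^\lambda$, which Section~\ref{convolution} only states as expected, so one must either prove it or replace that comparison by a direct analysis of the $\sV$-graded algebras.
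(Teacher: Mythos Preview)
Your approach---abelianization via fixed-point localization followed by reduction to rank-one computations---is exactly the strategy the paper points to (it only states that the theorem ``is proved in~\cite{bfn3,bfn5} by the fixed point localization and reduction to calculations in rank~1''), and you have correctly located the genuine work in matching the images of the two injections into the abelian product.

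Two places where your plan diverges from what is actually done in~\cite{bfn3} are worth flagging. First, the deformed and convolution isomorphisms are not obtained by a factorize-over-the-open-locus-then-extend-by-flatness argument: the flavor-deformed Coulomb branch $\ul\CM{}_C^\sZ(\sG,\bN)$ does not obviously factorize when the $z_s$ are distinct (the underlying variety $\CR_{\sG,\bN}$ is unchanged---only the equivariance is enlarged), so your propagation step as written does not go through. Instead, the isomorphism is built directly at the level of the deformed algebras by identifying both sides with an explicit quotient of a truncated shifted Yangian; the generators you name (Cartan classes $z_{i,r}$ and minuscule dressed monopole operators on one side, Drinfeld-type generators of the Yangian on the other) are matched with their relations, and the flavor parameters $z_s$ appear as the values at which the highest-weight series are truncated. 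Second, a global Hilbert-series comparison via the monopole formula~(\ref{mono}) is not available in general, since for many $(\lambda,\mu)$ the theory is neither good nor ugly and the series diverges; surjectivity is instead obtained algebraically, by checking that the Yangian-type generators already land in the image of the constructed map. Your identification of the ``no extra functions'' step as the crux is correct, but the resolution is generator-by-generator rather than character-theoretic.
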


\subsection{Further examples}
Let now $Q$ be an {\em affine} quiver of type $\tilde{A}\tilde{D}\tilde{E}$;
the framing $W$ is 1-dimensional concentrated at the extending vertex; and
the dimension of $V$ is $d$ times the minimal imaginary coroot $\delta$.
Then it is expected that $\CM_C(\sG,\bN)$ is isomorphic to the Uhlenbeck
(partial) compactification $\CU^d_G(\BA^2)$~\cite{bfg}
of the moduli space of $G$-bundles on $\BP^2$ trivialized
at $\BP^1_\infty$, of second Chern class $d$. This is proved for $G=\on{SL}(N)$
in~\cite{nt}.

Furthermore, let $Q$ be a star-shaped quiver with $b$ legs of length $N$ each, and
with $g$ loop-edges at the central vertex. The framing is trivial, and the dimension
of $V$ along each leg, starting at the outer end, is $1,2,\ldots,N-1,N$ (with
$N$ at the central vertex). Contrary to the general setup in~Section~\ref{quiver gauge},
we define $\sG$ as the quotient of $\GL(V)$ by the diagonal central
subgroup $\BC^\times$ (acting trivially on $\bN$). Then according to~\cite{bfn4},
$\CM_C(\sG,\bN)$ is isomorphic to $W_{\on{PGL}(N)}^{g,b}$ of~Section~\ref{sicilian}.

\section{More physics: topological twists of 3d N=4 QFT and categorical constructions}\label{categorical}

The constructions of this Section are mostly conjectural. The main idea of this Section is given by equation (\ref{cat}) which is due to T.~Dimofte, D.~Gaiotto, J.~Hilburn and P.~Yoo. We discuss some interesting corollaries of this equation.

\subsection{Extended topological field theories} Physical quantum field theories usually depend on a choice of metric on the space-time.
The theory is called topological if all the quantities (e.g.\ corelation functions) are independent of the metric (however, look at the warning at the end of the next subsection).
Mathematically, the axioms of a topological QFT were first formulated by Atiyah (cf.~\cite{atiyah}). Roughly speaking, a topological quantum field theory in dimension $d$ consists of the following data:

(a) A complex number $Z(M^d)$ for every closed $d$-dimensional manifold $M^d$;

(b) A space $Z(M^{d-1})$ for every closed $(d-1)$-dimensional manifold $M^{d-1}$;

(c) A vector in $Z(\partial M)$ for every compact oriented $d$-dimensional manifold $M$ with boundary $\partial M$.

\noindent
These data must satisfy certain list of standard axioms; we refer the reader to~\cite{atiyah} for details.
In addition, one can consider a richer structure called {\em extended topological field theory}. This structure in addition to (a), (b) and (c) as above must associate
$k$-category $Z(M^{d-k-1})$ to a closed manifold $M^{d-k-1}$ of dimension $d-k-1$. It should also associate an object of the $k$-category $Z(\partial M)$ to every compact oriented manifold $M$ of dimension $d-k$; more generally, there is a structure associated with every {\em manifold with corners} of dimension $\leq d$. We refer the reader to \cite{lurie} for details about extended topological field theories.
In the sequel we shall be mostly concerned with the case $d=3$. In this case one is supposed to associate a (usual) category to the circle $S^1$.
Physicists call it {\em the category of line operators}.

\subsection{Topological twists of 3d N=4 theories}
Physical quantum field theories are usually not topological. However, sometimes physicists can produce a universal procedure which associates a  topological field theory to a physical theory with enough super-symmetry. Since in these notes we are not discussing what a quantum field theory really is, we can't discuss what a topological twist really is. Physicists say that any 3d N=4 theory with some mild additional  structure\footnote{The nature of this additional structure will become more clear in Subsection \ref{cotangent}.} must have two topological twists (we'll call them Coulomb and Higgs twists, although physicists often call them $A$ and $B$ twists by analogy with similar construction for 2-dimensional field theories). These twists must be interchanged by the 3d mirror symmetry operation mentioned in Section \ref{naive}.

\subsection{Warning}\label{warning} The twists are topological only in some weak sense. Namely, in principle as was mentioned above in a topological field theory everything (e.g.\ correlators) should be independent of the metric (i.e.\ only depend on the topology of the relevant space-time).  In a weakly topological field theory everything should be metric-independent only locally. This issue will be ignored in this section since we are only going to discuss some pretty robust things but it is actually important if one wants to understand some finer aspects.

\subsection{The category of line operators in a topologically twisted 3d N=4 theory}
To a 3d TFT one should be able to attach a ``category of line operators" (i.e.\ this is the category one attaches to a circle in terms of the previous subsection). Morever, since the circle $S^1$ is the boundary of a canonical 2-dimensional manifold: the 2-dimensional disc, this category should come equipped with a canonical object. In this Section we would like to suggest a construction of these categories together with the above object for a wide class of topologically twisted 3d N=4 theories (we learned the idea of this construction from T.~Dimofte, D.~Gaiotto, J.~Hilburn and P.~Yoo who can actually derive this construction from physical considerations. To the best of our knowledge their paper on the subject is forthcoming).

A priori the above categories of line operators  should be $\ZZ_2$-graded. However, as was mentioned above, in order to define the relevant topological twists one needs to choose some mild additional structure on the theory (we explain this additional structure in series of examples in Subsection \ref{cotangent}). So we are actually going to think about them as
$\ZZ$-graded categories (in fact, as dg-categories). But we should keep in mind that if we choose this additional structure in a different way, then a priori we should get different $\ZZ$-graded categories but with the same underlying $\ZZ_2$-graded categories.

Since a 3d N=4 theory is supposed to have two topological twists which we call Coulomb and Higgs, we shall denote the corresponding categories of line operators  by $\calC_C,\calC_H$. As was mentioned above, filling the circle with a disc should
produce canonical objects $\calF_C,\calF_H$.

\medskip
\noindent
{\bf Remark for an advanced reader.} In principle in a true TQFT the category of line operators should be an $E_2$-category (cf.~\cite{lurie-ek}).
There is a closely related notion of factorizable category (in the $D$-module sense), a.k.a.\ chiral category, cf.~\cite{raskin}. In fact, the categories we are going to construct will be factorizable categories (and the canonical object, corresponding to the 2d disc will be a factorizable object). The fact that we get factorizable categories as opposed to $E_2$-categories is related to the warning in subsection \ref{warning}.

\medskip
\noindent
The relation between these structures and what we have discussed in the previous Sections is that one should have
\begin{equation}\label{cat-coulomb}
\Ext^*(\calF_C,\calF_C)=\CC[\calM_C]
\end{equation}
and
\begin{equation}\label{cat-higgs}
\Ext^*(\calF_H,\calF_H)=\CC[\calM_H].
\end{equation}

\medskip
\noindent
{\bf Remark.} It can be shown that for any factorization category $\calC$ and a factorization object $\calF$ the algebra $\Ext^*(\calF,\calF)$ is graded commutative.

\medskip
\noindent
When we need to emphasize dependence on a theory $\calT$, we shall write $\calC_C(\calT),\calF_C(\calT)$ etc.
The mirror symmetry conjecture then says
\begin{conj}\label{mirror}
The category $\calC_C(\calT)$ is equivalent to $\calC_H(\calT^*)$ (and the same with $C$ and $H$ interchanged).
Under this equivalence the object $\calF_C(\calT)$ goes over to $\calF_H(\calT^*)$.
\end{conj}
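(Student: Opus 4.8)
\medskip\noindent{\bf Towards a proof.}
Since the statement is conjectural, what follows is a program rather than a proof, and I expect it to be completed in full only in favorable cases. The plan is to first fix rigorous models for the two categories of line operators in the cotangent case $\calT=\calT(G,\bfN)$, and then exhibit the equivalence as an incarnation of the derived geometric Satake equivalence of Theorem~\ref{nonfree}. On the Coulomb side I would take $\calC_C(\calT)$ to be a dg (factorization) enhancement of the category $\calA_{\calT,G}\text{-mod}$ of modules over the ring object $\calA_{\calT,G}\in D_{G_\CO}(\Gr_G)$ of Section~\ref{ring objects}, with $\calF_C$ the free rank-one module $\calA_{\calT,G}$ itself; then (\ref{cat-coulomb}) holds essentially by construction, since $\Ext^*(\calA_{\calT,G},\calA_{\calT,G})=H^*_{G_\CO}(\Gr_G,\calA_{\calT,G})=\CC[\calM_C(G,\bfN)]$ by Section~\ref{general setup}. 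On the Higgs side I would take $\calC_H(\calT)$ to be a renormalized category of $D$-modules on the loop stack $[\bfN_\CK/G_\CK]$ (as a first approximation, twisted $D$-modules on the Higgs stack $[\bfN/G]$, or $\IndCoh$ of the derived zero fibre $[\mu^{-1}(0)/G]$ of the moment map $\mu\colon T^*\bfN\to\fg^*$), with $\calF_H$ the ``delta'' object along $[\bfN_\CO/G_\CO]$, normalized so that (\ref{cat-higgs}) reproduces $\CC[\calM_H]=\CC[T^*\bfN\tslash G]$.

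The core step is to transport $\calC_C(\calT)$ through derived Satake. Applying $\Psi_{qc}$ (and $\Psi_\hbar$ for the quantized version) identifies $D_{G_\CO}(\Gr_G)$ with $D^{G^\vee}(\Sym^{[]}(\gvee))$, hence $\calA_{\calT,G}\text{-mod}$ with the category of modules over $\Psi_{qc}^{-1}(\calA_{\calT,G})$, a $G^\vee$-equivariant commutative dg-algebra over $\Sym^{[]}(\gvee)$; by Lemma~\ref{3.14} this algebra is, up to the Chevalley twist $\fC_{G^\vee}$ of Section~\ref{dualities}, the algebra $\Phi_{qc}(\calA_{\calT,G})$, whose cohomology spectrum is a $G^\vee$-scheme over $(\gvee)^*$ --- precisely the shape of a Higgs-type category for a $G^\vee$-gauge theory. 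One then identifies the resulting category with $\calC_H$ of the S-dual theory $\calT^\vee$ of (\ref{gw}): using $\calA_{\calT[G^\vee]}=\calA_R$, the gluing construction, and Proposition~\ref{fusion}, the global sections $H^*_{G_\CO}(\Gr_G,\calA_R\otimes^!\calA_{\calT,G})$ compute $\CC[\calM_H(\calT^\vee)]$ by (\ref{higgs}), and the module category built from $\Psi_{qc}^{-1}(\calA_R\otimes^!\calA_{\calT,G})$ is the candidate $\calC_H(\calT^\vee)$. The distinguished object is carried along: $\Psi_{qc}$ sends $\calA_R$ (the image of $\calA_{\calT[G^\vee]}$, i.e.\ of $\CC[T^*G^\vee]^{[]}$) to the object serving as $\calF_C$ of the dual theory, and unwinding the identification matches it with $\calF_H(\calT^\vee)$, compatibly with (\ref{cat-coulomb}), (\ref{cat-higgs}) and $\calM_C(\calT)=\calM_H(\calT^\vee)$; a final bookkeeping step passes from the S-dual $\calT^\vee$, which derived Satake produces directly, to the mirror $\calT^*$ of Conjecture~\ref{mirror} (for many theories of interest these coincide, and in general the identification $\calT^\vee\leftrightarrow\calT^*$ is itself part of the program).

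The obstacles are of two kinds. First, foundational: the categories $\calC_C,\calC_H$ and their factorization (chiral) structures have no rigorous general definition in the material reviewed here, and even with the candidates above one must install the factorization structure on $\calA_{\calT,G}\text{-mod}$ and verify its compatibility with the convolution product of Section~\ref{general setup}, while the ``correct'' renormalization of $D$-modules on $[\bfN_\CK/G_\CK]$ is itself subtle. Second, and more serious: the equivalence in full generality is essentially a form of the (de Rham, quantum) geometric Langlands correspondence for the pair $(G,G^\vee)$ coupled to $\bfN$ --- for $\bfN=0$ the desired functor should reduce to $\Psi_{qc}$, and for $\bfN=\fg$ (the affine Steinberg case of Section~\ref{steinberg}) it should globalize to the full geometric Langlands functor --- so a complete proof lies beyond the methods surveyed here. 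What is realistic is: (i) the abelian/toric case, where both sides are explicit and the equivalence is a Fourier--Mukai/Mellin transform that can be checked directly from the description in Section~\ref{toric-hyp}; and (ii) the simply-laced quiver case, where the theorem of Section~\ref{back} identifies the Coulomb side with categories attached to the generalized slices $\ol\CW{}_\mu^\lambda$ and the Higgs side with categories attached to the Nakajima varieties $\fM(V,W)$, reducing Conjecture~\ref{mirror} to a comparison of (quantized) categories $\calO$ --- equivalently, categories of coherent sheaves --- on these two families of conical symplectic singularities, i.e.\ to the categorical symplectic duality of~\cite{BLPW},~\cite{w}.
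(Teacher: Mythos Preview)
The statement is a conjecture; the paper offers no proof, and you rightly frame your proposal as a program. The paper's own approach is to fix the models~(\ref{cat}),~(\ref{obj}) for $\calC_C(\calY),\calC_H(\calY),\calF_C(\calY),\calF_H(\calY)$ in the cotangent case and then, in the instances where the mirror $\calT^*$ is itself of cotangent type (toric theories, Conjecture~\ref{toric-mirror}; the self-mirror theory $\calT[\GL(n)]$, Conjectures~\ref{tg} and~\ref{grass}), to compare the two sides directly from those explicit descriptions.

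Your route is genuinely different, and it has a structural problem: you conflate the mirror $\calT\mapsto\calT^*$ with the S-dual $\calT\mapsto\calT^\vee$. In the paper these are distinct operations. S-duality acts on theories equipped with a \emph{flavor} symmetry $G$ and is \emph{defined} in terms of mirror symmetry by~(\ref{gw}); correspondingly, derived Satake is the paper's tool for S-duality (it appears as the local Langlands functor $\bfL_G$ in Conjecture~\ref{langlands}), not for mirror symmetry. When you transport $\CA$-modules through $\Psi_{qc}$ and appeal to~(\ref{higgs}), that identity computes $\CC[\calM_H(\calT^\vee)]$ for a theory carrying $G$ as flavor symmetry, whereas in $\calT(G,\bfN)$ the group $G$ has already been gauged; and your ``final bookkeeping step'' from $\calT^\vee$ to $\calT^*$ is then circular, since the very definition of $\calT^\vee$ presupposes the mirror. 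A secondary point: your model $\calC_C(\calT)\simeq\CA\text{-mod}$ in $D_{G_\CO}(\Gr_G)$ is not the paper's $\Dmod(\Maps(\calD^*,\bfN/G))$; these are related but not obviously equivalent, and it is the latter that enters the paper's concrete test cases. Your closing remarks on the toric and quiver cases are in the right spirit, but the paper envisions attacking them directly from~(\ref{cat}) (for toric, as a Fourier--Mellin type statement), not through derived Satake.
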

\subsection{Generalities on $D$-modules and de Rham pre-stacks}
In what follows we'll need to work with various categories of sheaves on spaces which are little more general than usual schemes or stacks.
Namely, we need to discuss de Rham pre-stacks and various categories of sheaves related to them. Our main reference for the subject is \cite{GaiRo}.

Let $S$ be a smooth scheme of finite type over $\CC$. Then one can define certain pre-stack (i.e.\ a functor from $\CC$-algebras to sets) $S_{dR}$ which is called {\em the de Rham pre-stack of $S$}. Informally it is defined as the quotient of $S$ by infinitesimal automorphisms.
Moreover, this definition can be extended to all schemes, stacks or even dg-stacks of finite type over $\CC$. A key property of $S_{dR}$ is that the category of quasi-coherent sheaves on $S_{dR}$ is the same as the category of $D$-modules on $S$.\footnote{Because we plunge ourselves into world of derived algebraic geometry here, it doesn't make sense to talk about either quasi-coherent sheaves or $D$-modules as an abelian category: only the corresponding derived category makes sense.}
In addition for a  target stack $\calY$ one can consider the mapping space $\Maps(S_{dR},\calY)$. Here are two important examples:

(1) Let $\calY=\AA^1$. Then $\Maps(S_{dR},\calY)$ is the de Rham cohomology of $S$ considered as a dg-scheme.

(2) Let $\calY=\pt/G$ where $G$ is an algebraic group. Then $\Maps(S_{dR},\calY)$ is the stack of $G$-local systems on $S$ (i.e.\ the stack classifying $G$-bundles on $S$ endowed with a flat connection).

\noindent
In the sequel we'll need to apply these constructions to $S$ being either the formal disc $\calD=\Spec(\calO)$ or the punctured disc $\calD^*=\Spec(\calK)$. This is not formally a special case of the above as some completion issues arise if one tries to spell out a careful definition. However, with some extra care all definitions can be extended to this case. This is done in \cite{Gai-Jer}.

\subsection{Construction of the categories in the cotangent case}\label{cotangent}
It is expected that one can attach the above theories and categories to any symplectic dg-stack $\calX$.
It is now easy to spell out the additional structure on the theory that one needs in order to define the two topological twists in terms
of the stack $\calX$. Namely, one needs a $\CC^{\times}$-action on $\calX$ with respect to which the symplectic form $\omega$ has weight 2.

We shall actually assume that $\calX=T^*\calY$ where $\calY$ is a smooth stack; in this case the above $\CC^{\times}$-action is automatic (we can just use the square of the standard $\CC^{\times}$-action on the cotangent fibers).
 We shall denote this theory by $\calT(\calY)$.

The following construction is due to T.~Dimofte, D.~Gaiotto, J.~Hilburn and P.~Yoo (private communication). Namely, let us set
\begin{equation}\label{cat}
\calC_C=\Dmod(\Maps(\calD^*,\calY));\quad \calC_H=\QCoh(\Maps(\calD_{dR}^*,\calY)).
\end{equation}
Let us stress that both $\Dmod$ and $\QCoh$ mean the corresponding derived categories.

Let now $\pi_C: \Maps(\calD_{dR},\calY)\to \Maps(\calD^*_{dR},\calY)$ be the natural map; similarly we define $\pi_H$. Then, we set
\begin{equation}\label{obj}
\calF_H=(\pi_H)_*\CO_{\Maps(\calD_{dR},\calY)};\quad \calF_C=(\pi_C)_!\CO_{\Maps(\calD,\calY)}.
\end{equation}

\subsection{A very important warning}\label{warning}
The above suggestion is probably only an approximation of a true statement. In fact, we believe that the suggestion is fine for $\calC_C$; however, for $\calC_H$ some modifications might be necessary. Let, for example (for simplicity), $\calZ$ be a dg-stack of finite type over $\CC$. Then following \cite{ArGa} in addition to the category $\QCoh(\calZ)$ one can also study the derived category $\IndCoh(\calZ)$ of ind-coherent sheaves on $\calZ$. The two categories coincide when $\calZ$ is a smooth classical (i.e.\ not dg) stack. But for more general $\calZ$ these categories are different. This can be seen as follows: the compact objects of $\IndCoh(\calZ)$ are by definition finite complexes with coherent cohomology, while the compact objects of $\QCoh(\calZ)$ are finite perfect complexes. Moreover, assume that $\calZ$ is locally a complete intersection. Then in \cite{ArGa} the authors define certain stack $\Sing(\calZ)$ endowed with a representable map $\Sing(\calZ)\to \calZ$, which is an isomorphism when $\calZ$ is a smooth classical stack. Moreover, the fibers of this map are vector spaces; in particular, there is a natural $\CC^{\x}$-action on the fibers whose stack of fixed points is naturally identified with $\calZ$. Given a closed conical substack $\calW\subset \Sing(\calZ)$ the authors in \cite{ArGa} define a category $\IndCoh_{\calW}(\calZ)$ of ind-coherent sheaves with singular support in $\calW$. These categories in some sense interpolate between $\QCoh(\calZ)$ and $\IndCoh(\calZ)$: namely, when $\calW=\calZ$ (the zero section of the morphism $\Sing(\calZ)\to \calZ$) we have  $\IndCoh_{\calW}(\calZ)=\QCoh(\calZ)$, and when $\calW=\Sing(\calZ)$
we have $\IndCoh_{\calW}(\calZ)=\IndCoh(\calZ)$.\footnote{The reader should be warned that although we have a natural functor $\IndCoh_{\calW}(\calZ)\to \IndCoh(\calZ)$, this functor is not fully faithful, so $\IndCoh_{\calW}(\calZ)$ is not a full subcategory of $\IndCoh(\calZ)$.}

We think that suggestion (\ref{cat}) is only ``the first approximation" to the right statement. More precisely, we believe that it is literally the right suggestion for the category $\calC_C$, but for the category $\calC_H$ one has to be more careful. We believe that the correct definition of the category $\calC_H$ in the above context should actually be $\IndCoh_{\calW}(\Maps(\calD_{dR}^*,\calY))$ for a particular choice of $\calW$ (very often $\calW$ will actually be the zero section but probably not always); at this moment we don't know how to specify $\calW$ in the above generality.
The purpose of the rest of the Section will be to explain some general picture, so in what follows we are going to ignore this subtlety, i.e.\ we shall proceed with the suggestion $\calC_H=\QCoh(\Maps(\calD_{dR}^*,\calY))$
as stated. But the reader should keep in mind that in certain cases it should be replaced by $\IndCoh_{\calW}(\Maps(\calD_{dR}^*,\calY))$ (this issue will become important when we formulate some rigorous conjectures (cf.\ for example the discussion before Conjecture \ref{grass}).

\subsection{Remarks about rigorous definitions}
Since the above mapping spaces are often genuinely infinite-dimensional, we need to discuss why the above categories make sense.
First, the category of $D$-modules on arbitrary pre-stack is discussed in \cite{Raskin-dmod}.
In fact, in {\em loc.\ cit.} the author defines two versions of this
category, which are denoted by $D^!$ and $D^*$ (these two categories are dual to each other). For the purposes of these notes we need to work with $D^*$: for example since it is this category for which the functor of direct image is well-defined.

The category $\QCoh(\mathcal Z)$ is well-defined for any pre-stack $\mathcal Z$; however in such generality it might be difficult to work with. However, we would like to note that $\Maps(\calD_{dR}^*,\calY)$ is typically a very manageable object. For example, when $\calY$ is a scheme of finite type over $\CC$ it follows from Conjecture \ref{small} below that $\Maps(\calD_{dR}^*,\calY)$ is a dg-scheme of finite type over $\CC$, so QCoh is ``classical" (modulo the fact that we have to work with commutative dg-algebras as opposed to usual commutative algebras). When $\calY$ is a stack, the definition is a bit less explicit; however, we claim that the definition is easy when $\calY$ is of the form $\calS/G$ where $\calS$ is an affine scheme and $G$ is a reductive group. For example, when $\calY=\pt/G$ we have $\Maps(\calD_{dR}^*,\calY)=\text{LocSyS}_G(\calD^*)$: the stack of $G$-local systems (i.e.\ principal $G$-bundles with a connection on $\calD^*)$, and QCoh$(\text{LocSyS}_G(\calD^*))$ is a well-studied object in (local) geometric Langlands correspondence.

Here is another reason why we want $\calY$ to be of the above form.
The map $\pi_H$ is actually always a closed embedding, so we could write $(\pi_H)_!$ instead of $(\pi_H)_*$. On the other hand, the functor $(\pi_C)_!$ is a priori not well defined, at least it is not defined for an arbitrary morphism. However, it is well-defined if the morphism $\pi_C$ is ind-proper. In what follows we shall always assume that the stack $\calY$ is such that it is the case. This condition is not always satisfied but it is also not super-restrictive as follows from the next exercise.

\medskip
\noindent
{\bf Exercise.} (a) Show that $\pi_C$ is a closed embedding if $\calY$ is a scheme.

(b) Show that if $\calY=\calS/G$ where $\calS$ is an affine scheme and $G$ is a reductive algebraic group then the morphism $\pi_C$ is ind-proper.

(c) Show that (b) might become false if we drop either the assumption that
$\calS$ is affine or the assumption that $G$ is reductive.

\medskip
\noindent

We shall  denote the corresponding categories~(\ref{cat}) and
objects~(\ref{obj}) simply by $\calC_C(\calY),\calF_C(\calY)$ etc.
Note that these categories are $\ZZ$-graded. The above arguments then suggest the following
\begin{conj}\label{z2}
Let $\calY,\calY'$ be two stacks such that $T^*\calY$ is isomorphic to $T^*\calY'$ as a symplectic dg-stack.
Then the corresponding $\ZZ_2$-graded versions of $\calC_C(\calY)$ and $\calC_C(\calY')$ are equivalent as $\ZZ_2$-graded factorization categories;
this equivalence sends $\calF_C(\calY)$ to $\calF_C(\calY')$. Similar statement holds for $\calC_H$.
\end{conj}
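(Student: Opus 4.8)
The goal is presentation-independence: the construction $\calY\rightsquigarrow\bigl(\calC_C(\calY),\calF_C(\calY)\bigr)$ should depend on $\calY$ only through the $\CC^\times$-symplectic dg-stack $\calX=T^*\calY$ (this is forced by the physical expectation that $\calX$ alone determines the topologically twisted theory $\calT(\calY)$ and its category of line operators, which~(\ref{cat})--(\ref{obj}) merely present in one particular way), and the loss of the $\ZZ$-grading on passing to $\ZZ_2$-graded versions will be exactly the loss of the grading carried by the fibrewise scaling $\CC^\times$ on $T^*\calY$. I would work in the case considered throughout Section~\ref{categorical}, $\calY=\calS/G$ with $\calS$ smooth affine and $G$ reductive, where $T^*\calY$ is the derived Hamiltonian reduction $(T^*\calS)\tslash G$, so the hypothesis reads $(T^*\calS)\tslash G\simeq(T^*\calS')\tslash G'$, $\Maps(\calD^*,\calY)=\calS_\CK/G_\CK$, $\Maps(\calD,\calY)=\calS_\CO/G_\CO$, $\pi_C$ is ind-proper by part~(b) of the Exercise in Subsection~\ref{cotangent}, and $\calC_C(\calY)=\Dmod(\calS_\CK)^{G_\CK}$ with $\calF_C(\calY)=(\pi_C)_!\CO_{\calS_\CO/G_\CO}$.

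The structural half of the argument passes through the ring objects of Section~\ref{Coulomb}. I would show that $\calF_C(\calY)$ is a compact generator of $\calC_C(\calY)$ and that the factorization algebra $\calA_{\calT(\calY),G}$ is recovered as its endomorphisms (its $!$-stalk at the base point of $\Gr_G$ being $\CC[\calM_C]$, compatibly with~(\ref{cat-coulomb})), and then invoke a Barr--Beck / Tannakian reconstruction to identify $\calC_C(\calY)$ with the factorization category of $\calA_{\calT(\calY),G}$-modules. This reduces the statement to the assertion that $\calA_{\calT(\calY),G}$, together with its ambient derived-Satake category, depends on the gauge-theoretic presentation $(\calS,G)$ only through $(T^*\calS)\tslash G$. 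For the sub-case of a \emph{change of polarization} ($\calS=\bfN,\ \calS'=\bfN^*$, or a more general Lagrangian splitting of the same $\bfM=T^*\bfN$ as a $G$-representation) the mechanism is the Tate Fourier transform $\Dmod(\bfN_\CK)\simeq\Dmod(\bfN^*_\CK)$, with its determinant-line twist: it intertwines the two $G_\CK$-actions and carries $\calF_C(\bfN/G)$ to $\calF_C(\bfN^*/G)$ up to the renormalization shifts already present in the definitions, and the gluing/derived-Satake descriptions of Section~\ref{Coulomb} (Propositions~\ref{coulomb reduction} and~\ref{fusion}) then rewrite $\calA_{\calT,G}$ in terms visibly attached to $T^*\bfN$ alone. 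The general case, in which even the gauge group changes, would have to be handled by matching both sides against the same derived-Satake data built directly from $\calX$.

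For the grading, an isomorphism of symplectic dg-stacks $T^*\calY\simeq T^*\calY'$ need not intertwine the two fibrewise scaling $\CC^\times$-actions (they may differ by an automorphism of $T^*\calY$, or coincide only up to the residual $\{\pm1\}$), and it is precisely this $\CC^\times$ that supplies the $\ZZ$-grading on $\calC_C$; its parity being canonical, one is left with a $\ZZ_2$-graded equivalence $\calC_C(\calY)\simeq\calC_C(\calY')$ of factorization categories sending $\calF_C(\calY)$ to $\calF_C(\calY')$.

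I expect the main obstacle to be the core of the second paragraph: upgrading the \emph{known} invariance of the spectrum $\calM_C=\Spec\CC[\calM_C]$ under change of presentation to an invariance of the whole factorization algebra $\calA_{\calT(\calY),G}$ and of the category of its modules, and establishing the categorical Barr--Beck reconstruction in the genuinely Tate, ind/pro-finite setting where $\Maps(\calD^*,\calY)$ lives --- a setting in which ``factorization category'' and the relevant $D$-module formalism themselves rest on the technology of~\cite{Raskin-dmod,GaiRo}. For the $\calC_H$-statement there is the additional difficulty flagged in Subsection~\ref{warning}: the correct category should be $\IndCoh_\calW\bigl(\Maps(\calD^*_{dR},\calY)\bigr)$ for a conical substack $\calW\subset\Sing\bigl(\Maps(\calD^*_{dR},\calY)\bigr)$, so one must first pin down $\calW$ and check that it too is an invariant of $T^*\calY$; granting that, a Koszul-type duality between $\Dmod$ and $\IndCoh$ --- the categorical shadow of $3d$ mirror symmetry, consistent with~(\ref{cat-higgs}) --- should transport the $\calC_C$-argument to $\calC_H$.
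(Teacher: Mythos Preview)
The statement you are attempting to prove is labeled \textbf{Conjecture}~\ref{z2} in the paper, and the paper offers \emph{no proof} of it whatsoever. It is presented as an open expectation motivated by the physical picture (the topologically twisted theory should depend only on the symplectic dg-stack $\calX=T^*\calY$, not on the choice of polarization), and the authors move on immediately to the next topic. So there is no ``paper's own proof'' against which to compare your proposal.

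What you have written is not a proof either, and you say so yourself: the second and fourth paragraphs identify the obstacles (Barr--Beck reconstruction in the Tate setting, invariance of the full factorization algebra $\calA_{\calT(\calY),G}$ rather than just of its cohomology ring, the unspecified singular-support condition $\calW$ for $\calC_H$) and leave them unresolved. Your outline is a reasonable strategic sketch --- the Fourier-transform mechanism for change of polarization is indeed the expected tool in the linear case, and the reduction to presentation-independence of the ring object is a sensible way to organize the problem --- but none of the hard steps are carried out. In particular, the ``general case, in which even the gauge group changes'' is precisely where the conjecture has content beyond what is known, and you defer it entirely. So this remains a proposal for how one might attack an open conjecture, not a proof, and the paper is in the same position.
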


\subsection{Small loops}
In fact, one can demistify the category $\calC_H(\calY)$ a little bit which makes it quite computable.
First of all, with the correct definition it is easy to see that $\Maps(\calD_{dR},\calY)$ is equivalent to $\calY$ (for any $\calY$).\footnote{Here we see that $\Maps(\calD_{dR},\calY)$ should be defined with some extra care. Namely, if we just used the naive definition then the equivalence $\Maps(\calD_{dR},\calY)\simeq \calY$ would imply that $\calD_{dR}=\on{pt}$ which is far from being the case.}
Now, given $\calY$ let us define another dg-stack $L\calY$ (we shall call it ``small loops" into $\calY$) by setting
$$
L\calY=\calY\underset {\calY\times \calY}\times \calY,
$$
where in the above equation both maps $\calY\to \calY\times \calY$ are equal to the diagonal
map.\footnote{Here we want to stress once again that all fibered products must be understood in the dg-sense!}
We have a natural map $\calY\to L\calY$.
\begin{conj}\label{small}
\begin{enumerate}
\item
Let $\calY$ be a scheme. Then $L\calY$ and $\Maps(\calD^*_{dR},\calY)$ are isomorphic (and this isomorphism is compatible with the map from $\calY=\Maps(\calD_{dR},\calY)$ into both).
\item
Let $\calY$ be a stack. Then the formal neighbourhoods of $\calY=\Maps(\calD_{dR},\calY)$ in $\Maps(\calD^*_{dR},\calY)$ and in $L\calY$ are equivalent.
\end{enumerate}
\end{conj}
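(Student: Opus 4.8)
The plan is to establish \conjref{small} by unwinding both sides into explicit (dg-)algebraic descriptions and matching them, treating the scheme case (1) first and then bootstrapping to the stacky case (2) via formal completion.

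For part (1), first I would record the description of $\Maps(\calD^*_{dR},\calY)$: since $\calY$ is a scheme, a map $\calD^*_{dR}\to\calY$ from the de Rham prestack of the punctured disc is the same as a ``horizontal'' map, i.e.\ one that is constant in the formal-disc directions after passing to infinitesimals. The key point — which is exactly the content of the warning preceding the conjecture, and which uses the careful definitions of \cite{Gai-Jer} — is that such a map is determined by its restriction to the reduced point together with the data of ``higher-order agreement along the two sheets,'' and that this precisely reproduces the self-intersection $\calY\times_{\calY\times\calY}\calY$. Concretely, I would argue that $\Maps(\calD_{dR},\calY)\simeq\calY$ (the footnoted fact, which one proves by noting $\calD_{dR}$ has a single $\CC$-point and any horizontal map out of it factors through that point at the level of reduced schemes, while the derived structure is trivial), and that the punctured disc $\calD^*_{dR}$, being ``two copies of $\calD_{dR}$ glued along a smaller punctured piece,'' yields the homotopy pushout presentation whose mapping space is the homotopy fiber product $L\calY$. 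The compatibility with the maps from $\calY$ is then automatic since on both sides the map from $\calY=\Maps(\calD_{dR},\calY)$ is induced by the (closed) embedding $\calD_{dR}\hookrightarrow\calD^*_{dR}$ of prestacks, which on the $L\calY$ side corresponds to the diagonal $\calY\to\calY\times_{\calY\times\calY}\calY$.

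For part (2), the stacky case, I would not try to prove a global isomorphism (indeed it is false in general — this is why only formal neighborhoods are claimed). Instead I would use deformation theory: both $\Maps(\calD^*_{dR},\calY)$ and $L\calY$ are built from $\calY$ by a homotopy-fiber-product construction over $\calY\times\calY$ along the diagonal, so their formal neighborhoods of the common substack $\calY$ are controlled by the same shifted tangent/cotangent complex, namely (a shift of) the relative tangent complex $T_{\calY/\calY\times\calY}\simeq T_\calY[-1]$. I would make this precise by identifying the formal completion $(\Maps(\calD^*_{dR},\calY))^\wedge_\calY$ with the formal moduli problem classified by the dg-Lie algebra $T_\calY[-1]$ (loop Lie algebra of $\calY$), and likewise $(L\calY)^\wedge_\calY$ with the same formal moduli problem, using Lurie's equivalence between formal moduli problems and dg-Lie algebras. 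The scheme case (1) serves as the ``de Rham'' input: after smooth descent from an atlas $U\to\calY$ with $U$ affine, the statement over $U$ is part (1), and one checks the descent data match because the construction $\calY\mapsto\Maps(\calD^*_{dR},\calY)$ commutes with the relevant (smooth, hence formally étale after completion) base changes.

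The main obstacle I anticipate is making rigorous the claim that $\Maps(\calD^*_{dR},\calY)$ — which a priori involves maps out of an honestly infinite-dimensional (pro-finite-type) prestack with its subtle completions à la \cite{Raskin-dmod,Gai-Jer} — actually has a finite-type derived structure identified with the explicit fiber product $L\calY$; the completion issues flagged in the text mean one must be careful that the ``two sheets glued along a punctured neighborhood'' heuristic genuinely produces a homotopy pushout of prestacks and that $\Maps$ sends it to the homotopy pullback, rather than to some pro-object or ind-object thereof. In other words, the heart of the proof is a careful analysis of the prestack $\calD^*_{dR}$ and the exactness properties of $\Maps(-,\calY)$; once that is in hand, the identification with $L\calY$ and the passage to formal neighborhoods in the stacky case are comparatively formal, via deformation theory and the equivalence with formal moduli problems.
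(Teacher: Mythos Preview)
The paper does not prove this statement: immediately after stating \conjref{small} it says ``The proof of Conjecture~\ref{small} will be written in a different publication. In what follows we shall assume Conjecture~\ref{small}.'' So there is no argument in the paper to compare your proposal against.

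That said, a few comments on the proposal itself. Your strategy for part~(2) via deformation theory and formal moduli problems is the natural one and is likely close to what the authors have in mind. For part~(1), however, the key step --- presenting $\calD^*_{dR}$ as a homotopy pushout of ``two copies of $\calD_{dR}$ glued along a smaller punctured piece'' --- is left entirely as a heuristic, and you do not say what that smaller piece is or why the pushout description holds at the level of prestacks (rather than merely at the level of de Rham cohomology). The honest obstacle you flag at the end is exactly right: this pushout presentation of $\calD^*_{dR}$, together with the requisite exactness of $\Maps(-,\calY)$, is the entire content of the conjecture, and your proposal does not supply it. Everything else you write is either a restatement of the conjecture or a formal consequence once that presentation is in hand.
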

The proof of Conjecture \ref{small} will be written in a different publication. In what follows we shall assume Conjecture \ref{small}.

\subsection{Remark}If $\calY$ is a scheme then it is easy to see that both $\Maps(\calD^*_{dR},\calY)$ and $L\calY$ are dg-extensions of $\calY$ (i.e.\ they are dg-schemes whose underlying classical scheme is $\calY$), so if the statement of Conjecture \ref{small} holds on the level of formal neighbourhoods then in fact we have $L\calY=\Maps(\calD^*_{dR},\calY)$. This is not the case for stacks. Namely, let $G$ be a reductive algebraic group and let $\calY=\pt/G$. Then it is easy to see that $\Maps(\calD^*_{dR},\calY)$ is the stack $\on{LocSys}_G(\calD^*)$ of $G$-local systems on $\calD^*$ (i.e.\ principal $G$-bundles on $\calD^*$ with a connection).

\medskip\noindent
{\bf Exercise.} Show that for $\calY=\pt/G$ we have $L\calY=G/\on{Ad}(G)$ (i.e.\ quotient of $G$ by itself with respect to the adjoint action).
Show that $G/\on{Ad}(G)$ is not equivalent to $\on{LocSys}_G(\calD^*)$ but the formal neighbourhoods of $\pt/G$ in both are equivalent (the embedding of $\pt/G$ into $\on{LocSys}_G(\calD^*)$ corresponds to the trivial local system).
\subsection{An example}\label{cat-example}
The significance of Conjecture \ref{small} is that it allows to use the (very explicit) stack $L\calY$ in order to compute the Ext-algebra (\ref{cat-higgs}).

Assume that $\calY$ is a smooth scheme which for simplicity we shall also assume to be affine.
Then $L\calY$ is just the dg-scheme
$\Spec(\Sym_{\CO_{\calY}} T^*\calY[1])$. Since we have
$$
\Ext^*_{\Sym_{\CO_{\calY}} T^*\calY[1])}(\CO_{\calY},\CO_{\calY})=\Sym_{\CO_{\calY}}(T\calY[-2]),
$$
we see that (with grading disregarded) $\CC[\calM_H]=\CC[T^*\calY]$ which is what we should have in this case.
In fact, if we want to rembember the grading we see that the homological grading on the RHS goes to grading coming from dilation of the cotangent fibers on the LHS. Recall that writing $\calX$ as $T^*\calY$ is an additional structure which is precisely the one required in order to make all the categories $\ZZ$-graded (as opposed to $\ZZ_2$-graded; note also that the grading on $\Sym_{\CO_{\calY}}(T\calY[-2])$ is even, so the corresponding $\ZZ_2$-grading is trivial).

Let us now compute $\CC[\calM_C]$ in this case. Since $\calY$ is an affine scheme, it follows that
$\Maps(\calD,\calY)$ is a closed subscheme in the ind-scheme $\Maps(\calD^*,\calY)$, so
$\Ext^*(\calF_C,\calF_C)$ is just equal to the de Rham cohomology of $\Maps(\calD,\calY)$. Since $\calY$ is smooth the (evaluation at $0\in \calD$) map $\Maps(\calD,\calY)\to \calY$ is a fiber bundle whose fibers are infinite-dimensional affine spaces. Thus it induces an isomorphism on
de Rham cohomology. Hence we get $\CC[\calM_C]=H^*(\calY,\CC)=H^*(T^*\calY,\CC)$. So if $\calY$ is connected, we see that $\calM_C$ is a dg-extension of pt; moreover, if $\calY$ is a vector space,
that $\calM_C=\pt$ even as dg-schemes (as was promised in subsection \ref{basic-example}).

\subsection{Gauge theory}
Consider now the the example when $\calY=\bfN/G$, where $G$ is a connected reductive group and $\bfN$ is a representation of $G$.

\medskip
\noindent
{\bf Exercise} Show that in this case the RHS of (\ref{cat-coulomb}) is literally the same as $H^{G_\CO}_\bullet(\CR_{G,\bN})$.

\medskip
So, we see that our categorical point of view recovers the definition of the Coulomb branch we gave before. Let us look at the Higgs branch.
According to Conjecture \ref{small} we need to understand the dg-stack
\begin{equation}\label{n/g}
L(\bfN/G)=\bfN/G\underset{\bfN/G\times \bfN/G}\times \bfN/G.
\end{equation}
Let us actually first assume that $\bfN$ is any smooth variety with a $G$-action. Then it is easy to see that (\ref{n/g}) is a dg-stack which admits the following description. The action of $G$ on $\bfN$ defines a natural map of locally free $\CO_{\bfN}$-modules
$$
\grg\otimes\CO_{\bfN}\to T\bfN.
$$
Consider the dual map
$$
T^*\bfN\to \grg^*\otimes \CO_{\bfN}
$$
and let us regard it as two step complex of coherent sheaves on $\bfN$ where $T^*\bfN$ lives in degree $-1$ and $\grg^*\otimes \CO_{\bfN}$ lives in degree $0$. Let us denote this complex by $K^{\bullet}$. Then $\Sym_{\CO_{\bfN}}(K^{\bullet})$ is a quasi-coherent dg-algebra on $\bfN$.

\medskip
\noindent
{\bf Exercise.} Show that the formal neighbourhood of $\bfN/G$ in $L(\bfN/G)$ is equivalent to the formal neighbourhood of $\bfN/G$ in
 $\Spec(\Sym_{\CO_{\bfN}}(K^{\bullet}))/G$ (note that when $G$ is trivial we just recover $\Spec(\Sym(T^*\bfN)[1]))$ as in the previous subsection).

\medskip
\noindent
It now  follows that the RHS of (\ref{cat-higgs}) in our case becomes equal to the $G$-invariant part of
\begin{equation}\label{bred}
\Ext^*_{\Sym_{\CO_{\bfN}}(K^{\bullet})}(\CO_{\bfN},\CO_{\bfN}).
\end{equation}
Assume now for simplicity that $\bfN$ is affine. Then it is easy to see that (\ref{bred}) is equal to the cohomology of  $\Sym_{\CO_{\bfN}}((K^{\bullet})^*[-1])$.

\medskip
\noindent
{\bf Exercise.} Show that as a $\ZZ_2$-graded algebra $\Sym_{\CO_{\bfN}}((K^{\bullet})^*[-1])$ is quasi-isomorphic to
the algebra of functions on the dg-scheme $\mu^{-1}(0)$ where $\mu:T^*\bfN\to \grg^*$ is the moment map.

\medskip
\noindent
The exercise implies that the RHS of (\ref{cat-higgs}) is isomorphic to the algebra of functions on the dg-stack $\mu^{-1}(0)/G$.
\subsection{Mirror symmetry in the toric case}
Let us assume that we are in the situation of subsection \ref{toric-gauge}.
We set $\calY=\CC^n/T, \calY^*=\CC^n/T_F^{\vee}$.
Combining (\ref{cat}) and (\ref{obj}) with Conjecture \ref{mirror} we already obtain a bunch of non-trivial statements. Namely, we arrive at the following
\begin{conj}\label{toric-mirror}
For the above choice of $\calY$ and $\calY^*$ we have equivalences of (factorization) categories
$$
\begin{aligned}
\Dmod(\Maps(\calD^*,\calY))\simeq \QCoh(\Maps(\calD^*_{dR},\calY^*))\\
\Dmod(\Maps(\calD^*,\calY^*))\simeq \QCoh(\Maps(\calD^*_{dR},\calY)).
\end{aligned}
$$
\end{conj}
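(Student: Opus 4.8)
The plan is to read the statement as a loop-space categorification of classical Gale duality and to categorify, step by step, the proof of its decategorified counterpart (the combination of Proposition~\ref{coulomb reduction} with Example~\ref{toric-hyp}): reduce, by a chain of \emph{categorical} Hamiltonian reductions along the flavour torus, to a rank-one ``master'' building block, prove that one directly, and propagate it back. From the start one should work with factorization categories over the Ran space $\on{Ran}(\AA^1)$; as the prestacks $\Maps(\calD^*,\calY)$ and $\Maps(\calD^*_{dR},\calY^*)$ are visibly factorizable, I would set up the equivalence in families over $\on{Ran}(\AA^1)$, so that compatibility with the factorization structure --- and hence with the distinguished factorization objects $\calF_C$ and $\calF_H$ of~(\ref{obj}) --- is automatic rather than checked afterwards.

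\emph{Reduction to rank one.} Let $T\subset(\GG_m)^n$ be the subtorus with $\calY=\CC^n/T$, so that $\calY^*=\CC^n/T_F^\vee$ with $T_F=(\GG_m)^n/T$, $T_F^\vee$ the Gale dual of $T_F$, and $(\GG_m)^n/T_F^\vee\cong T^\vee$. From $\CC^n/T=[\CC^n/T]$ one gets $\Maps(\calD^*,\calY)=[\Maps(\calD^*,\CC^n)/T(\calK)]$ (using $H^1(\calD^*,T)=0$), so $\calC_C(\calY)$ is the categorical Hamiltonian reduction $\calC_C(\CC^n)\tslash T(\calK)$; dually, from $\CC^n/(\GG_m)^n=(\CC^n/T_F^\vee)/T^\vee$ one gets $\Maps(\calD^*_{dR},\calY^*)=\Maps(\calD^*_{dR},\CC^n/(\GG_m)^n)\times_{\on{LocSys}_{T^\vee}(\calD^*)}\pt$, so $\calC_H(\calY^*)$ is a categorical reduction of $\calC_H(\CC^n/(\GG_m)^n)$ along the stack $\on{LocSys}_{T^\vee}(\calD^*)$. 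Thus the conjecture follows from (i) the rank-one master equivalences $\Dmod(\Maps(\calD^*,\CC))\simeq\QCoh(\Maps(\calD^*_{dR},\CC/\GG_m))$ and $\Dmod(\Maps(\calD^*,\CC/\GG_m))\simeq\QCoh(\Maps(\calD^*_{dR},\CC))$ (one for each line of the conjecture), together with (ii) the assertion that the relevant master equivalence is equivariant for the convolution action of $\Dmod(\Bun_{(\GG_m)^n}(\calD^*))$ and the tensor action of $\QCoh(\on{LocSys}_{(\GG_m)^n}(\calD^*))$, matched under the abelian local geometric Langlands equivalence $\Dmod(\Bun_S(\calD^*))\simeq\QCoh(\on{LocSys}_{S^\vee}(\calD^*))$ for tori, so that the reduction by $T(\calK)$ goes over to the reduction by $\on{LocSys}_{T^\vee}(\calD^*)$.

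\emph{The rank-one master case.} Consider, say, $\Dmod(\Maps(\calD^*,\CC/\GG_m))\simeq\QCoh(\Maps(\calD^*_{dR},\CC))$; the mirror statement is handled identically. Conjecture~\ref{small} (which, for the scheme $\CC$, gives an honest isomorphism $\Maps(\calD^*_{dR},\CC)\cong L\CC=\CC\times_{\CC\times\CC}\CC=\Spec(\CC[t]\otimes\Lambda^\bullet[\xi])$, $\deg\xi=-1$) presents the right-hand side as $\QCoh$ of an explicit affine dg-scheme, with $\calF_H=(\pi_H)_*\calO_\CC$ the $\CC[t,\xi]$-module $\CC[t]$; on the left, $\Maps(\calD^*,\CC/\GG_m)=[\Maps(\calD^*,\CC)/\GG_m(\calK)]$ and $\calF_C=(\pi_C)_!\calO$ is the $!$-pushforward from the arc substack $[\Maps(\calD,\CC)/\GG_m(\calO)]$. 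I would construct the equivalence as a Mellin--Fourier transform along an explicit correspondence, verify on the distinguished objects that it carries $\calF_C$ to $\calF_H$ --- compatibly with the computation $\RHom(\calF_C,\calF_C)=H^{\GG_m(\calO)}_\bullet(\CR_{\GG_m,\CC})=\CC[\CM_C(\GG_m,\CC)]$ versus $\RHom_{\QCoh(L\CC)}(\calF_H,\calF_H)=\CC[T^*\CC]$ (computed via Subsection~\ref{cat-example}), both of which are the coordinate ring $\CC[\AA^2]$ of the Kleinian $A_0$-surface, cf.\ Example~\ref{toric-hyp} --- and then prove it is an equivalence by a fixed-point/rank-one argument using $\GG_m(\calK)\cong\ZZ\times\GG_m\times(1+z\CC[[z]])$. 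This rank-one statement is a de~Rham incarnation of Tate's thesis.

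\emph{The main obstacle.} The real difficulty is not the bookkeeping above but the caveat of Subsection~\ref{warning}: as literally written, $\calC_H=\QCoh(\Maps(\calD^*_{dR},\calY^*))$ is very probably the wrong category and should be replaced by $\IndCoh_\calW(\Maps(\calD^*_{dR},\calY^*))$ for a suitable conical substack $\calW\subset\Sing(\Maps(\calD^*_{dR},\calY^*))$; already in the rank-one master case one must decide whether $\calW$ is the zero section (so that $\QCoh$ --- hence the conjecture as stated --- is correct), which I expect holds there since $L\CC$ is a complete intersection of the simplest (Koszul) type, but in general one has to (a) pin $\calW$ down and (b) show that the reductions of the previous step respect the singular-support condition. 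A secondary, more technical, point is to make the categorical Hamiltonian reductions and the equivariance statement~(ii) precise and factorization-compatible, since $T(\calK)$ and $\on{LocSys}_{T^\vee}(\calD^*)$ are not of finite type, so one must work within a framework of renormalized / ind-coherent categories in which the needed base-change and (abelian) local-Langlands inputs are known.
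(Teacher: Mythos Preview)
The paper contains no proof of this statement: it is labeled a Conjecture, and immediately after stating it the authors write that ``a proof of this conjecture is the subject of a current work in progress of the first named author with Dennis Gaitsgory.'' The only thing the paper does is discuss the simplest instance $n=1$, $T$ trivial (so $\calY=\AA^1$, $\calY^*=\AA^1/\GG_m$), and even there they explicitly decline to give a rigorous argument: ``We are not in a position to give a rigorous proof here, since for this we'll need to spell out careful definitions of both categories, and that goes beyond the scope of these notes.'' Instead they match a couple of objects and their Ext-algebras by hand. So there is nothing in the paper against which to compare your proposal.

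That said, your strategy --- categorify the decategorified argument of Proposition~\ref{coulomb reduction} and Example~\ref{toric-hyp}, reduce along the flavour torus to a rank-one building block, and identify that building block as a de~Rham avatar of Tate's thesis --- is a natural one, and your rank-one case is precisely the example the paper singles out. You are also right to flag the $\QCoh$ versus $\IndCoh_\calW$ ambiguity of Subsection~\ref{warning} as the genuine obstruction: the paper itself stresses that the definition of $\calC_H$ is only an approximation, so until $\calW$ is pinned down the target of your equivalence is not even fixed. Your step~(ii), matching the $T(\calK)$-reduction on the $D$-module side with the $\on{LocSys}_{T^\vee}(\calD^*)$-reduction on the coherent side via abelian local Langlands, is where most of the actual work would lie, and you correctly identify this (together with the infinite-type bookkeeping) as the secondary technical hurdle. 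In short: your outline is a credible plan of attack on an open conjecture, not a comparison target for anything proved in the paper.
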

A proof of this conjecture is the subject of a current work in progress of the first named author with Dennis Gaitsgory.
Let us discuss the simplest example (which is already quite non-trivial).

Let us take $n=1$ and let $T$ be trivial. In other words we get $\calY=\AA^1$. Then $\calY^*=\AA^1/\GG_m$.
So, let us look closely at what Conjecture \ref{toric-mirror} says in this case.

First, $\Maps(\calD^*_{dR},\calY)=\Maps(\calD^*_{dR},\AA^1)=H^*_{dR}(\calD^*)=\AA^1\times \AA^1[-1]$.
In other words, $\Maps(\calD^*_{dR},\calY)=\Spec(\CC[x,\eps])$ where $\deg(x)=0,\deg(\eps)=-1$ (we consider it as a dg-algebra with trivial differential). The category $\calC_C(\calY)$ is then just the derived category of dg-modules over this algebra.
More precisely, it is the QCoh version of this derived category --- we again refer the reader to Chapter II of \cite{GaiRo}.
The object $\calF_C$ corresponds to the dg-module $\CC[x]$ (on which $\eps$ acts trivially) in degree $0$.

We claim that in this case
the category $\QCoh(\Maps(\calD^*_{dR},\AA^1))$ is equivalent to $D$-mod$(\Maps(\calD^*,\AA^1))$ even as a $\ZZ$-graded category.
We are not in a position to give a rigorous proof here, since for this we'll need to spell out careful definitions of both categories, and that
goes beyond the scope of these notes. Let us give some examples of objects which go to one another under the above equivalence.
First, the object $\calF_C\in D$-mod$(\Maps(\calD^*,\AA^1))$ is described as follows. Let $i_n:\calO\to \calK$ be the embedding which sends
$f$ to $z^n f$ (here $n\in \ZZ$). Then we have
$$
\calF_C(\AA^1/\GG_m)=\bigoplus\limits_{n\in \ZZ} (i_n)_*\CO.
$$

\medskip
\noindent
{\bf Warning.} To understand this object carefully one really needs to spell out the definition. Let us mention the problem one has to fight with. It is intuitively clear that we have a $\ZZ$-action on $\calK$ such that $n\in \ZZ$ sends $f(z)$ to $z^n f(z)$. On the other hand, assume that $n\geq 0$. Then $z^n\calO$ has codimension $n$ in $\calO$ (although one is obtained from the other by means of the $\ZZ$-action). This problem is in fact not as serious as it might seem at the first glance -- it just shows that the actual definition of $D$-modules on $\calK$ (or even on $\calO$) must take into account certain homological shifts.

\medskip
\noindent
Having the above warning in mind, it is easy to see that $\Ext^*(\calF_C,\calF_C)=\CC[x,y]$ where $\deg(x)=0,\deg(y)=2$. On the other hand, we also
have
$$
\Ext^*_{\CC[x,\eps]}(\CC[x],\CC[x])=\CC[x,y],
$$
which matches our expectations.

Here is another example. Consider the module $\CC$ over $\CC[x,\eps]$ (i.e.\ we think of it as a dg-module concentrated in degree 0, on which $x$ acts by 1  and $\eps$ acts by $0$). Then under the above equivalence it goes to the $D$-module $\delta$ of delta-functions at $0\in \calK$ (considered as a $\calK^*$-equivariant $D$-module). Note that the $\calK^*$-equivariant Ext from $\delta$ to itself is the same as $H^*_{\calK^*}(\pt,\CC)$. Now, homotopically $\calK^*$ is equivalent to $\CC^{\times}\times \ZZ$ and we have
$$
H^*_{\CC^{\times}\times \ZZ}(\pt,\CC)=\CC[y,\theta]\quad \text{where $\deg(y)=2,\deg(\theta)=1$}.
$$
On the other hand the same (dg) algebra $\CC[y,\theta]$ is equal to $\Ext^*_{\CC[x,\eps]}(\CC,\CC)$.

\subsection{The theory $\calT[G]$}

Here is another expectation. Let $D(\Gr_G)^{\Hecke}$ denote the derived category of Hecke eigen-modules on $\Gr_G$, i.e.\ $D$-modules which are also right modules for the algebra $\calA_R$.
\begin{conj}\label{tg}
The category $\calC_C(\calT[G])$ is the category $D(\Gr_G)^{\Hecke}$ and $\calF_C=\calA_R$.
\end{conj}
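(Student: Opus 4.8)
The plan is to deduce the statement from a general principle governing the category of line operators of any theory with flavor symmetry, together with the identification $\calA_{\calT[G]}=\calA_R$ recalled in Section~\ref{naive}. Let $\calT$ be a theory with flavor symmetry $G$, presented as $\calT(\calY)$ for a smooth stack $\calY$ carrying a $G$-action; for $\calT[G]$ with $G$ of type $A$ one takes for $\calY$ the quotient $\bfN/\sG$ attached to the linear $A$-type quiver (cf.\ Section~\ref{sicilian}), and for general reductive $G$ the Gaiotto--Witten description of $T[G]$ provides such a $\calY$. Gauging $G$ equips $\calC_C(\calT)$ with a module structure over the spherical convolution category $\mathrm{Sph}_G:=\Dmod(\Gr_G)$ --- this is the category of line operators of pure $G$-gauge theory, and it enters the gauging operation exactly as the ring object $\calA_{\calT,G}\in D_{G_\CO}(\Gr_G)$ does in item~(4) of Section~\ref{naive} --- and $\calC_C(\calT)$ contains the distinguished object $\calF_C(\calT)$ of~(\ref{obj}). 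The first and main step is to prove that $\calF_C(\calT)$ generates $\calC_C(\calT)$ under the $\mathrm{Sph}_G$-action and colimits, and that the $\mathrm{Sph}_G$-linear internal endomorphism object $\underline{\End}_{\mathrm{Sph}_G}(\calF_C(\calT))\in\mathrm{Sph}_G$ is canonically the ring object $\calA_{\calT,G}$. Note the compatibility at the level of costalks at the unit of $\Gr_G$: one has $\Hom_{\calC_C(\calT)}(\calF_C,\calF_C)=\Hom_{\mathrm{Sph}_G}(\one_{\Gr_G},\underline{\End}(\calF_C))=i^!\calA_{\calT,G}=\BC[\calM_C(\calT)]$, which is forced by~(\ref{cat-coulomb}) and item~(4) of Section~\ref{naive}. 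Granting this, a Barr--Beck/monadicity argument (generation giving conservativity of the would-be comparison functor) identifies $\calC_C(\calT)$ with the category of $\calA_{\calT,G}$-module objects in $\mathrm{Sph}_G$, carrying $\calF_C(\calT)$ to $\calA_{\calT,G}$ viewed as the free rank-one module.

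Specializing to $\calT=\calT[G]$ then finishes the argument. By the defining property $\calA_{\calT[G],G}\cong\calA_R$ in $D_{G_\CO}(\Gr_G)$ (Section~\ref{naive}; equivalently, via the $A$-type quiver identification and, for general $G$, via the Gaiotto--Witten construction), the category of $\calA_{\calT[G],G}$-modules in $\Dmod(\Gr_G)$ is precisely $D(\Gr_G)^{\Hecke}$, and the distinguished object goes to $\calA_R$, which is the assertion. As a consistency check, $\End(\calF_C(\calT[G]))=i^!\calA_R$; by the geometric Satake description of the $!$-costalk of $\IC(\ol\Gr{}^\lambda_G)$ (a cohomological shift of the zero-weight subspace of $V^\lambda$) and Kostant's theorem on the $G^\vee$-module structure of $\BC[\calN_{\grg^\vee}]$ this is isomorphic to $\BC[\calN_{\grg^\vee}]=\BC[\calM_C(\calT[G])]$, in agreement with~(\ref{cat-coulomb}). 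Dually, using Conjecture~\ref{small} and the description of $L(\bfN/\sG)$ from the gauge-theory discussion above, one computes $\End(\calF_H(\calT[G]))=\BC[\calN_\grg]=\BC[\calM_H(\calT[G])]$; the two computations are interchanged by the mirror statement $\calT[G]^*=\calT[G^\vee]$ of Conjecture~\ref{mirror}.

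The hard part will be the first step of the first paragraph. The ansatz~(\ref{cat}) is not presently on rigorous enough footing to prove either that $\calF_C(\calT)$ generates $\calC_C(\calT)$ over $\mathrm{Sph}_G$ or that $\underline{\End}_{\mathrm{Sph}_G}(\calF_C(\calT))$ is formal and equals $\calA_{\calT,G}$ as a ring object; the latter is a formality statement in the spirit of Theorem~\ref{nonfree} (formality of $\RHom$-algebras in the derived Satake category), but one must carry it out inside the module category of $\mathrm{Sph}_G$, which seems to be the genuinely new input required. One must also pin down the cotangent realization $\calY$ of $\calT[G]$ for non-simply-laced and exceptional $G$, where $\calT[G]$ is not a quiver gauge theory. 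Finally, the singular-support subtlety of Section~\ref{warning} must be confronted: the correct $\calC_H(\calT[G])$ should be $\IndCoh_\calW(\Maps(\calD_{dR}^*,\calY))$ for a suitable conical $\calW$, and via mirror symmetry this dictates whether the Hecke eigenmodules in the statement must carry a singular-support (global nilpotency) condition; I expect this point --- the categorical reflection of the difference between $\calN_{\grg^\vee}$ and all of $\grg^\vee$ --- together with the module-category formality statement to be the real crux.
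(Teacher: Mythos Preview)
The statement is a \emph{conjecture} in the paper, not a theorem; there is no proof to compare against. What the paper does offer immediately after stating the conjecture is partial evidence in the direction of your strategy: for $G=\GL(n)$, where $\calT[G]=\calT(\calY)$ with $\calY$ as in~(\ref{ygln}), the paper constructs the comparison functor $\calC_C(\calT[G])\to D(\Gr_G)^{\Hecke}$ by sending $\calG$ to the $D$-module on $\Gr_G$ whose $!$-stalk at $g$ is $\RHom(\calF_C^g,\calG)$, and records that this functor sends $\calF_C$ to $\calA_R$, citing~\cite{bfn4}. This is exactly your internal-endomorphism identification $\underline\End_{\mathrm{Sph}_G}(\calF_C)\simeq\calA_{\calT,G}=\calA_R$, so your Barr--Beck outline is the natural continuation of what the paper sketches. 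The generation step you correctly flag as the crux is precisely what the paper does not attempt.

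One genuine gap in your proposal: you assert at the outset that ``for general reductive $G$ the Gaiotto--Witten description of $T[G]$ provides such a $\calY$,'' and only later concede this is open outside type $A$. In fact the paper's ansatz~(\ref{cat}) does not even \emph{define} $\calC_C(\calT[G])$ outside type $A$, since no cotangent presentation $\calT[G]=\calT(\calY)$ is known there; the Gaiotto--Witten formula~(\ref{gw}) is a duality statement, not a cotangent realization. So for general $G$ the conjecture concerns an object whose very construction is conjectural, and your monadicity argument cannot get started. Your proposal is honest about its incompleteness, but it would be cleaner to restrict the argued portion to $G=\GL(n)$ from the beginning and treat general $G$ as a separate, more speculative problem.
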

Let us combine it with (\ref{cat}). In the case when $G=\GL(n)$ the theory $\calT[G]$ does in fact come from a smooth stack $\calY$;
here
\begin{equation}\label{ygln}
\calY=(\prod\limits_{i=1}^{n-1} \Hom(\CC^i,\CC^{i+1}))/\prod\limits_{i=1}^{n-1} \GL(i)
\end{equation}
(note that $\calY$ still has an action of $\GL(n)$).
So, from (\ref{cat}) we get another construction of $\calC_C$ which should be equivalent to the one from \ref{tg}.

It is in fact easy to construct a functor in one direction. Namely, let $\calC$ be a category with a $D$-module action of some group $G$; let also $\calF$ be a $G_{\calO}$-equivariant object.
Then $\calF$ defines a functor $\calC\to D$-mod$(\Gr_G)$. Moreover, this functor sends $\calF$ to a ring object $\calA_{\calF}$ and the above functor can be upgraded to a functor from $\calC$ to
$\calA_{\calF}$-modules in $D$-mod$(\Gr_G)$. Namely, this functor sends every $\calG$ to the $D$-module on $\Gr_G$ whose $!$-stalk at some $g$ is equal to $\RHom(\calF^g,\calG)$. In our case we take $\calC$ to be the category of $D$-modules on $\Maps(\calD^*,\calY)$ and take
$\calF=\calF_C$. Then the above functor sends $\calF$ to $\calA_R$ (this is essentially proved
in~\cite{bfn4}).

Note that for $G=\GL(n)$ the theory $\calT[G]$ is supposed to be self-dual (with respect
to mirror symmetry procedure). Hence it follows that in this case
the category $\calC_C$ should be equivalent to $\calC_H$.
Therefore, it is natural to expect that the category $\QCoh(\Maps(\calD^*_{dR},\calY))$ is equivalent to $D(\Gr_{\GL(n)})^{\Hecke}$.
However, we expect that it is actually wrong as stated -- the reason is the warning from subsection \ref{warning}. However, we do believe in the following 
\begin{conj}
Let $\calY$ be as in (\ref{ygln}). Then
the category $\IndCoh(\Maps(\calD^*_{dR},\calY))$ is equivalent to $D(\Gr_{\GL(n)})^{\Hecke}$.
\end{conj}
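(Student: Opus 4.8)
The plan is to prove the asserted equivalence by describing both sides explicitly in terms of the Langlands dual data and then matching them; I would deliberately avoid routing the argument through mirror symmetry for the self-dual theory (Conjecture~\ref{mirror} applied to $\calT[\GL(n)]=\calT[\GL(n)]^*$, together with Conjecture~\ref{tg}), since that would be circular.

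For the right-hand side I would proceed via derived geometric Satake. By (the $\GL$-case of) Conjecture~\ref{tg} and the construction recalled after it, $D(\Gr_{\GL(n)})^{\Hecke}$ is the category of $\calA_R$-modules in $D_{\GL(n)_\CO}(\Gr_{\GL(n)})$ (or its factorization/ind-completed enhancement, as needed to make $\calC_C$ the full category of line operators), with $\calF_C=\calA_R$. Applying the equivalence $\Psi_{qc}$ of Theorem~\ref{nonfree} and the identification $\Psi_{qc}^{-1}(\calA_R)=\BC[T^*\GL(n)]^{[]}=\Sym^{[]}(\fgl(n))\otimes\BC[\GL(n)]$ of Section~\ref{regular sheaf} — with its $\GL(n)$-equivariant $\Sym^{[]}(\fgl(n))$-algebra structure — this becomes the category of $\BC[T^*\GL(n)]^{[]}$-modules in $D^{\GL(n)}(\Sym^{[]}\fgl(n))$. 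Since $\BC[T^*\GL(n)]^{[]}$ is the algebra of functions of an affine morphism to (a shift of) $\fgl(n)$, equivariant descent presents this as $\IndCoh(\calZ_{\mathrm{spec}})$ for a quasi-smooth derived stack $\calZ_{\mathrm{spec}}$ assembled from $T^*\GL(n)$, $\fgl(n)$ and the $\GL(n)$-action — morally the tame part of the stack $\LS_{\GL(n)}(\calD^*)$ of formal local systems. Here it is essential to use $\IndCoh$ and not $\QCoh$: $\calZ_{\mathrm{spec}}$ is singular and $\calA_R$ is not a perfect complex, so it is visible only on the $\IndCoh$ side.

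For the left-hand side I would make $\Maps(\calD^*_{dR},\calY)$ concrete. With $\bN=\bigoplus_{i=1}^{n-1}\Hom(\CC^i,\CC^{i+1})$ and $G=\prod_{i=1}^{n-1}\GL(i)$, a $\calD^*_{dR}$-point of $\calY=\bN/G$ is a $G$-local system on $\calD^*$ together with a horizontal section of the associated $\bN$-bundle, i.e.\ a chain of horizontal maps $\CE_1\to\CE_2\to\cdots\to\CE_{n-1}\to\CF$ of local systems on $\calD^*$ of ranks $1,2,\dots,n$; forgetting the chain exhibits $\Maps(\calD^*_{dR},\calY)$ as a de Rham Grothendieck--Springer over $\LS_{\GL(n)}(\calD^*)=\{\CF\}$. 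On the formal neighbourhood of the constant maps $\calY\hookrightarrow\Maps(\calD^*_{dR},\calY)$, Conjecture~\ref{small}(2) replaces this by $L\calY=\calY\times_{\calY\times\calY}\calY$, which by Section~\ref{cotangent} is $[\Spec_{\CO_\bN}\Sym_{\CO_\bN}(K^\bullet)/G]$ with $K^\bullet=[T^*\bN\to\fg^*\otimes\CO_\bN]$ in degrees $-1,0$: a quasi-smooth derived stack whose classical truncation is the quiver ``commuting stack'' $[\{(v\in\bN,\xi\in\fg):\xi v=0\}/G]$, with $\calF_H$ the pushforward of $\CO_{\bN/G}$. Note that $L\calY$ here captures only a formal neighbourhood and is genuinely smaller than $\Maps(\calD^*_{dR},\calY)$ for stack targets, as the exercise on $\pt/G$ shows, so I would work with the Grothendieck--Springer description globally and use the $L\calY$-picture only to control the formal geometry.

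\textbf{The matching and the main obstacle.} It remains to construct an equivalence $\IndCoh(\Maps(\calD^*_{dR},\calY))\simeq\IndCoh(\calZ_{\mathrm{spec}})$ sending $\calF_H$ to $\calA_R$, and I expect this to be the crux. Already for the endomorphism algebras of the distinguished objects it contains the isomorphism $H^{G_\CO}_\bullet(\CR_{G,\bN})\cong\BC[\calN_{\fgl(n)}]$ — the realization of the $A_{n-1}$-quiver Coulomb branch as the nilpotent cone, proved in~\cite{bfn3} by fixed-point localization and rank-one reduction — together with the identification $\calA_F^{\BC^\times}\cong\calA_R^{\BC^\times}$ of Section~\ref{ring objects} (the ``regular sheaf in type $A$''). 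The task is to promote these identities of rings and of ring objects to an equivalence of the whole module categories: a categorified Mirković--Vybornov isomorphism intertwining the quiver side (chains of local systems on $\calD^*$) with the spectral side $\calZ_{\mathrm{spec}}$, compatibly with the convolution/Hecke structures. The genuinely delicate point — and the reason the statement involves $\IndCoh$ rather than $\QCoh$, cf.\ Section~\ref{warning} — is the bookkeeping of singular supports: one must check that the equivalence of classical truncations lifts to the derived level with the \emph{full} singular support on both sides (so that no proper $\IndCoh_\calW$-correction is needed), which should reduce to an explicit comparison of the two $(-1)$-shifted cotangent complexes, namely $K^\bullet$ on the quiver side versus the complex cut out by the derived-Satake presentation of $[T^*\GL(n)/\GL(n)]$ over $[\fgl(n)/\GL(n)]$ on the spectral side, together with quasi-smoothness. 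A possible alternative is to first prove that the functor $\Dmod(\Maps(\calD^*,\calY))\to\calA_R\text{-mod}$ constructed after Conjecture~\ref{tg} — sending $\calG$ to the $D$-module with $!$-stalk $\RHom(\calF_C^g,\calG)$ — is an equivalence, using the linear and star-shaped quiver presentations of $\calT[\GL(n)]$ and the results of~\cite{bfn3,bfn4,nt}, and then to establish the single instance $\calC_C(\calY)\simeq\calC_H(\calY)$ of mirror symmetry for this one self-dual theory; but the ``compute both sides'' route above looks more self-contained.
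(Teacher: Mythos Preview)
The statement you are attempting to prove is a \emph{Conjecture} in the paper, not a theorem or proposition: the paper offers no proof, and explicitly treats it as open. Immediately after stating it the authors give an equivalent reformulation (Conjecture~\ref{grass}) and remark only that in~\cite{bf-gl2} they prove a \emph{weaker} version of Conjecture~\ref{grass} for $\GL(2)$, in part to illustrate why $\IndCoh$ rather than $\QCoh$ is required. So there is no ``paper's own proof'' to compare your proposal against.

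As for the proposal itself: it is a reasonable strategic outline, and you correctly identify the key inputs (derived Satake, the identification $\Psi_{qc}^{-1}(\CA_R)=\BC[T^*\GL(n)]^{[]}$, the $A_{n-1}$ quiver Coulomb branch computation, and the regular-sheaf-in-type-$A$ result of~\cite{bfn4}). You also correctly flag the main obstacle --- promoting the isomorphism of ring objects to an equivalence of module categories with the correct singular-support bookkeeping --- as the genuine crux. But this is precisely the step that is not known: your proposal names the difficulty without resolving it. The phrase ``should reduce to an explicit comparison of the two $(-1)$-shifted cotangent complexes\ldots together with quasi-smoothness'' hides exactly the work that would constitute a proof. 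In short, what you have written is a well-informed plan of attack on an open problem, not a proof; and since the paper does not claim a proof either, there is nothing further to compare.
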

Here is (an equivalent) variant of this conjecture. Note that the action of the group $\GL(n)$ on $\calY$ gives rise to an action of the same group on $\Maps(\calD^*_{dR}, \calY)$. Hence we can consider the category $\on{QCoh}(\Maps(\calD^*_{dR}, \calY)/\GL(n))$. This category admits a natural action of the tensor category $\on{Rep}(\GL(n))$. Note that the geometric Satake equivalence also gives rise to an action of  $\on{Rep}(\GL(n))$ on $D$-mod$(\Gr_{\GL(n)})$ (the action is by convolution with spherical $D$-modules on the right).
\begin{conj}\label{grass}
The categories $\on{IndCoh}(\Maps(\calD^*_{dR}, \calY)/\GL(n))$ and $\Dmod(\Gr_{\GL(n)})$ are equivalent as module categories over $\on{Rep}(\GL(n))$.
\end{conj}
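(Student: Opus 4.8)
The plan is to first use reductivity of $\GL(n)$ to turn the module-category statement into a plain equivalence, and then to realize that equivalence through derived geometric Satake. Since $\GL(n)$ is reductive the classifying stack $B\GL(n)$ is $1$-affine, so a $\Rep(\GL(n))$-module category is determined by its de-equivariantization together with the residual $\GL(n)$-action. On the spectral side $\IndCoh(\Maps(\calD^*_{dR},\calY)/\GL(n))\otimes_{\Rep(\GL(n))}\on{Vect}\simeq\IndCoh(\Maps(\calD^*_{dR},\calY))$, which by~(\ref{cat}) is $\calC_H$ of the theory $\calT(\calY)=\calT[\GL(n)]$; on the automorphic side $\Dmod(\Gr_{\GL(n)})\otimes_{\Rep(\GL(n))}\on{Vect}$ is, in the paper's terminology, the Hecke-eigen category $D(\Gr_{\GL(n)})^{\Hecke}$ of $\calA_R$-modules, which is $\calC_C(\calT[\GL(n)])$ by Conjecture~\ref{tg}. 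Thus Conjecture~\ref{grass} is equivalent to a $\GL(n)$-equivariant version of the mirror self-duality $\calC_H(\calT[\GL(n)])\simeq\calC_C(\calT[\GL(n)]^*)=\calC_C(\calT[\GL(n)])$ (Conjecture~\ref{mirror}) sending $\calF_H(\calY)$ to $\calA_R$, and the task is to prove this last statement using the available tools, namely derived Satake (Theorem~\ref{nonfree}) and the ``small loops'' description (Conjecture~\ref{small}).

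The functor can be written down explicitly. The $\GL(n)$-action on $\calY$ makes $\calF_H(\calY)=(\pi_H)_*\CO_{\Maps(\calD_{dR},\calY)}$ a $\GL(n)_\CO$-equivariant object, and then, exactly as in the construction recalled after Conjecture~\ref{tg} (which for this $\calY$ is carried out in~\cite{bfn4}), it defines a $\GL(n)$-equivariant functor $\Phi\colon\IndCoh(\Maps(\calD^*_{dR},\calY))\to\Dmod(\Gr_{\GL(n)})$ landing in $\calA_R$-modules with $\Phi(\calF_H(\calY))=\calA_R$. To see $\Phi$ is fully faithful on the generator one compares endomorphism dg-algebras: by~(\ref{cat-higgs}) and the Nakajima quiver-variety computation (Section~\ref{quiver}), $\RHom(\calF_H(\calY),\calF_H(\calY))\simeq\CC[\calM_H]=\CC[\calN_{\fgl(n)}]$, while $\RHom_{D(\Gr_{\GL(n)})^{\Hecke}}(\calA_R,\calA_R)$ is computed through $\Psi_{qc}$ of Theorem~\ref{nonfree} and Lemma~\ref{3.14} (using $\Psi_{qc}^{-1}(\calA_R)=\BC[T^*\GL(n)^\vee]^{[]}$ and $\GL(n)^\vee=\GL(n)$), and the two agree. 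The underlying geometric input is that, by Conjecture~\ref{small}, $\Maps(\calD^*_{dR},\calY)$ is, near $\calY=\Maps(\calD_{dR},\calY)$, the small-loop stack $L\calY=\calY\times_{\calY\times\calY}\calY$, which for $\calY=\bfN/\sG$ with $\sG=\prod_{i=1}^{n-1}\GL(i)$ and $\bfN$ the linear $A_{n-1}$-quiver representation is a derived thickening of $\mu^{-1}(0)/\sG$, i.e. of $\calN_{\fgl(n)}$, to be matched with the derived Grothendieck--Springer/Kostant geometry of $(\fgl(n))^*$ on the spectral side, compatibly with Proposition~\ref{fusion}.

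The main obstacle is generation, tangled up with the foundational issue flagged in subsection~\ref{warning}: the equivalence can hold only after $\IndCoh$ is replaced by $\IndCoh_\calW$ for the correct closed conical substack $\calW\subset\Sing(\Maps(\calD^*_{dR},\calY))$ -- conjecturally the zero section, so that $\IndCoh_\calW=\QCoh$, but this must be established, since $\QCoh$, $\IndCoh_\calW$ and $\IndCoh$ have different ``sizes'' and only one can match $D(\Gr_{\GL(n)})^{\Hecke}$. The two things to prove are that $\calF_H(\calY)$ compactly generates the $\IndCoh_\calW$ side and that $\calA_R$ generates the Hecke category in the relevant de-equivariantized sense -- the latter being a strong form of the statement that $\calA_R$ is a projective generator. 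Both require Conjecture~\ref{small} as input and a careful setup of $\Dmod$ and $\IndCoh$ on the infinite-type mapping stacks, following~\cite{GaiRo,ArGa,Raskin-dmod}. Once $\calW$ is pinned down and generation is in hand, Barr--Beck promotes $\Phi$ to an equivalence, and $1$-affineness of $B\GL(n)$ upgrades it to an equivalence of $\Rep(\GL(n))$-module categories, as desired.
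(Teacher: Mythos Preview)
The statement you are attempting to prove is labeled a \emph{Conjecture} in the paper and is not proved there; the paper only remarks that a weaker version for $\GL(2)$ is established in the separate work~\cite{bf-gl2}. So there is no ``paper's own proof'' to compare against, and what you have written is not a proof either but a strategic outline that reduces one conjecture to several others.

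Beyond the gaps you yourself flag (generation, the choice of $\calW$, foundations on infinite-type stacks), there are further problems. First, your argument rests on Conjectures~\ref{mirror},~\ref{tg}, and~\ref{small}, all of which are themselves unproved in the paper; stacking conjectures is heuristics, not proof. Second, your guess that the relevant singular support $\calW$ is the zero section, so that $\IndCoh_\calW=\QCoh$, is directly contradicted by the paper: the sentence immediately after Conjecture~\ref{grass} says the $\GL(2)$ result in~\cite{bf-gl2} is ``sufficient in order to explain why we need IndCoh and not QCoh in the formulation.'' Thus full $\IndCoh$ is genuinely required here, and the size discrepancy you mention is real and cannot be wished away. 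Third, the functor construction you invoke from the discussion after Conjecture~\ref{tg} applies to a category carrying a \emph{strong} $G(\CK)$-action; the $\IndCoh$ side only carries a weak $\GL(n)$-action, so that recipe does not produce a functor out of $\calC_H$ as written. Finally, matching $\RHom(\calF_H,\calF_H)$ with $\RHom(\calA_R,\calA_R)$ as graded commutative algebras via $\CC[\calN_{\fgl(n)}]$ is not enough: one would need the full dg (or $E_2$) structures to agree, and even then this only controls the subcategory generated by $\calF_H$, which loops you back to the unresolved generation problem before Barr--Beck can be invoked.
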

In the paper \cite{bf-gl2} we prove a weaker version of Conjecture \ref{grass} for $\GL(2)$ (in particular, the version of Conjecture \ref{grass} proved in  \cite{bf-gl2} is sufficient in order to explain why we need IndCoh and not QCoh in the formulation).

\subsection{$G$-symmetry and gauging}
Let us now address the following question. Let $\calT$ be a theory acted on by a (reductive) algebraic group $G$. What kind of structures does this action imply in terms of the categories $\calC_H,\calC_C$?

To answer this question, we need to recall two general notions. First, given a category $\calC$ and a group ind-scheme $\calG$ there is a notion  {\em strong} or {\em infinitesimally trivial} $\calG$-action on $\calC$ (cf.~\cite{Gai-act}). The main example of such an action is as follows: given a pre-stack $\calS$ with a $\calG$-action, the group $\calG$ acts strongly on the (derived) category of $D$-modules on $\calS$. If one replaces $D$-modules by quasi-coherent sheaves, one gets the notion of {\em weak} $G$-action on a category $\calC$. Given a category $\calC$ with a strong $\calG$-action one can define the category of {\em strongly equivariant objects in $\calC$}
(cf.~\cite[page~4]{Gai-act}); we shall denote this category by $\calC^{\calG}$.

On the other hand, for a stack $\calZ$ and a (dg-)category $\calC$ there is a notion of ``$\calC$ living over $\calZ$" (cf.~\cite{Gai-cat}). This simply means that the category $\on{QCoh}(\calZ)$ (which is a tensor category) acts on $\calC$. Given a geometric point $z$ of $\calZ$ we can consider the fiber $\calC_z$ of $\calC$ at $z$. This category always has a weak action of the group Aut$_z$ of automorphisms of the point $z$.

Now we can formulate an (approximate) answer to the above question. Namely, we expect that a $G$-action on $\calT$ should produce the following structures:

\medskip
(1) A category $\calC_H(G,\calT)$ which lives over $\on{LocSys}_G(\calD^*)$ endowed with an equivalence
$$
\calC_H(G,\calT)_{\mathbf{Triv}}\simeq \calC_H(T).
\footnote{Again, the reader should keep in mind subsection \ref{warning}.}
$$
Here $\mathbf{Triv}$ stands for the trivial local system.

(2) A strong $G(\calK)=\Maps(\calD^*,G)$-action on the category $\calC_C(\calT)$.

\medskip
\noindent
Note that $G$ is the group of automorphisms of the trivial local system. Hence~(1) implies that a $G$-action on $\calT$ yields a weak action of $G$ on $\calC_H(\calT)$.

\medskip
\noindent
{\bf Exercise.} Show that this action extends to a weak action of $LG$ (which is a dg-extension of $G$) on $\calC$.

\medskip
\noindent
The reader must be warned that a weak action of $G$ or even of $LG$ on $\calC_H(\calT)$ is a very small amount of data: for example, it is not sufficient in order to reconstruct $\calC_H(G,\calT)$.

Recall now that if a group $G$ acts on a theory $\calT$ then we can form the corresponding gauge theory $\calT/G$. Then we expect that
\begin{equation}\label{gauge-cat}
\calC_H(\calT/G)=\calC(G,\calT);\quad \calC_C(\calT/G)=\calC_C(\calT)^{G(\calK)}.
\end{equation}

Let us now go back to the case $\calT=\calT(\calY)$. In this case an action of $G$ on $\calY$ yields an action of $G$ on $\calT(\calY)$.
In this case we expect that $\calT(\calY)/G=\calT(\calY/G)$.
Let us discuss the compatibility of this statement with above categorical structures. First, an action of $G$ on $\calY$ gives rise to an action of $G(\calK)$ on
$\Maps(\calD^*,\calY)$, hence a strong action on $D$-mod$(\Maps(\calD^*,\calY))$. Moreover,
$D$-mod$(\Maps(\calD^*,\calY))^{G(\calK)}=D$-mod$(\Maps(\calD^*,\calY/G))$ which is compatible with the 2nd equation of (\ref{gauge-cat}).
On the other hand, it is easy to see that the category $\on{QCoh}(\Maps(\calD^*_{dR}, \calY/G))$ lives over $\on{QCoh}(\LS_G(\calD^*))$ and its fiber over $\mathbf{Triv}$ is $\on{QCoh}(\Maps(\calD^*_{dR}, \calY))$ which is compatible with the first equation of
(\ref{gauge-cat}).

\subsection{$S$-duality and local geometric Langlands}\label{glanglands}This subsection is a somewhat side topic: here we would like to mention a possible connection of the above discussion with (conjectural) local geometric Langlands correspondence. A reader who is not interested in the subject is welcome to skip this subsection.

The local geometric Langlands duality predicts the existence of an equivalence $\bfL_G$ between the ($\infty$-)category of (dg-)categories with strong $G(\calK)$-action and the ($\infty$-)category of (dg-)categories over $\QCoh(\LS_{G^{\vee}}(\calD^*))$ (as was already mentioned earlier in these notes we are going to ignore higher categorical structures, which are in fact necessary in order to discuss these things rigorously).\footnote{It is known that this is only an approximate conjecture. The correct conjecture (due to A.~Arinkin) requires a (rather tricky) modification of the notion category over $\LS_G(\calD^*)$ (which again has to do with the difference between QCoh and IndCoh).}

Let us now recall that given a theory $\calT$ with a $G$-action one expects the existence of the $S$-dual theory $\calT^{\vee}$ with a $G^{\vee}$-action. Thus we see that we get a category $\calC_C(\calT^{\vee})$ with a strong $G^{\vee}(\calK)$-action and a category
$\calC_H(\calT^{\vee}/G^{\vee})$ which lives over $\LS_{G^{\vee}}(\calD^*)$.

\begin{conj}\label{langlands}
We have natural equivalences
$$
\bfL_G(\calC_C(\calT))\simeq \calC_H(\calT^{\vee}/G^{\vee});\quad \bfL_{G^{\vee}}(\calC_C(\calT^{\vee}))\simeq \calC_H(\calT/G).
$$
\end{conj}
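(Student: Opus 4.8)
The plan is to first observe that the two displayed equivalences are exchanged by the involution $(G,\calT)\mapsto(G^\vee,\calT^\vee)$ together with $(\calT^\vee)^\vee\simeq\calT$, so that it suffices to produce the first. I would then reduce to a cotangent-type theory $\calT=\calT(\calY)$ for a smooth stack $\calY$ carrying a $G$-action --- the only setting in which every object in the statement has been (conjecturally) defined. For such $\calT$ one has $\calC_C(\calT)=\Dmod(\Maps(\calD^*,\calY))$, and the $G$-action on $\calY$ induces exactly the strong $G(\calK)=\Maps(\calD^*,G)$-action of the statement, so the goal becomes the identification
\[
\bfL_G\bigl(\Dmod(\Maps(\calD^*,\calY))\bigr)\ \simeq\ \calC_H\bigl(\calT(\calY)^\vee/G^\vee\bigr)
\]
as sheaves of categories over $\QCoh(\LS_{G^\vee}(\calD^*))$.

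\textbf{Unfolding the right-hand side.} Next I would feed in the Gaiotto--Witten formula $\calT(\calY)^\vee=((\calT(\calY)\times\calT[G])/G)^*$ together with categorical mirror symmetry (Conjecture~\ref{mirror}) and the gauging formulas~(\ref{gauge-cat}). Gauging by $G^\vee$ on the outside, Conjecture~\ref{mirror} identifies $\calC_H(\calT(\calY)^\vee/G^\vee)$ with $\calC_H\bigl(G^\vee,((\calT(\calY)\times\calT[G])/G)^*\bigr)$, a category over $\LS_{G^\vee}(\calD^*)$; a further application of Conjecture~\ref{mirror} and of~(\ref{gauge-cat}) computes its fiber over the trivial local system as
\[
\calC_C\bigl((\calT(\calY)\times\calT[G])/G\bigr)=\bigl(\calC_C(\calT(\calY))\otimes\calC_C(\calT[G])\bigr)^{G(\calK)}=\bigl(\Dmod(\Maps(\calD^*,\calY))\otimes D(\Gr_G)^{\Hecke}\bigr)^{G(\calK)},
\]
where I have used Conjecture~\ref{tg}: $\calC_C(\calT[G])\simeq D(\Gr_G)^{\Hecke}$ with $\calF_C(\calT[G])=\calA_R$. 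The datum recording how this fiber varies over $\LS_{G^\vee}(\calD^*)$ should be exactly the $\QCoh(\LS_{G^\vee}(\calD^*))$-module structure induced by the $G^\vee$-action on $\calA_R=\calA_{\calT[G^\vee]}$ (cf.~(\ref{gw}),~(\ref{higgs})). Thus the problem reduces to showing that the assignment
\[
\calC\ \longmapsto\ \calC\otimes_{\Dmod(G(\calK))}D(\Gr_G)^{\Hecke},
\]
equipped with this module structure, is a model for $\bfL_G$.

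\textbf{Identifying $\bfL_G$.} The key point I would then have to establish is that $D(\Gr_G)^{\Hecke}$, viewed as a $\bigl(\Dmod(G(\calK)),\QCoh(\LS_{G^\vee}(\calD^*))\bigr)$-bimodule category, is the integral kernel implementing local geometric Langlands: that it is invertible and realizes $\bfL_G(\calC)=\calC\otimes_{\Dmod(G(\calK))}D(\Gr_G)^{\Hecke}$. This is the expected compatibility of derived geometric Satake --- in the form recalled in Section~\ref{derived-satake}, where under $\Psi_\hbar$ the regular sheaf $\calA_R$ goes to $\CalD_\hbar^{[]}(G^\vee)$ with its tautological $G^\vee$-action (Section~\ref{regular sheaf}) --- with local Langlands duality. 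Granting it, I would finish by matching the two $\QCoh(\LS_{G^\vee}(\calD^*))$-module structures on the two sides of the displayed equivalence: on the right it is the $G^\vee$-action on $\calA_R$ just described, and on the left it is built into $\bfL_G$ through the same derived Satake picture, so the comparison reduces to the identification of $\calA_R$ with $\CalD_\hbar^{[]}(G^\vee)$ from Section~\ref{regular sheaf}.

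\textbf{The main obstacle.} The hard part will be the identification of $\bfL_G$ with $\calC\mapsto\calC\otimes_{\Dmod(G(\calK))}D(\Gr_G)^{\Hecke}$ in the previous step: it presupposes a rigorous construction of the local geometric Langlands functor $\bfL_G$ (still conjectural) and a workable formalism of sheaves of categories over $\LS_{G^\vee}(\calD^*)$. Moreover, as flagged in Subsection~\ref{warning}, already the definitions of $\calC_H$ and, correspondingly, of $\bfL_G$ (following Arinkin's modification) may have to be refined from $\QCoh$ to a category $\IndCoh_\calW$ of sheaves with prescribed singular support, and pinning down the correct $\calW$ is itself open. By contrast, the reduction to the cotangent case and the unfolding of $\calT^\vee/G^\vee$ via Gaiotto--Witten and mirror symmetry should be routine once the conjectural inputs of Sections~\ref{naive} and~\ref{categorical} are in place.
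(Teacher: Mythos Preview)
The statement you are attempting to prove is labeled a \emph{Conjecture} in the paper, and the paper offers no proof of it. There is therefore nothing to compare your attempt against: the authors state the equivalence as an expectation, and immediately afterward only discuss the special case $\calT=\text{trivial}$ to motivate the notion of a universal Langlands category $\calC_G$. Your proposal is not a proof either, and to your credit you essentially say so in the final paragraph: what you have written is a heuristic reduction of Conjecture~\ref{langlands} to the concatenation of Conjecture~\ref{mirror}, the Gaiotto--Witten formula~(\ref{gw}), Conjecture~\ref{tg}, and the (conjectural) existence and kernel description of $\bfL_G$.

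That said, there is a genuine circularity in your strategy that you should be aware of. Your key step is to \emph{define} $\bfL_G$ by the formula $\calC\mapsto\calC\otimes_{\Dmod(G(\calK))}D(\Gr_G)^{\Hecke}$ and then verify that this matches the right-hand side. But the paper's discussion immediately following the conjecture makes clear that the identification of $D(\Gr_G)^{\Hecke}$ (or rather $\calC_G=\calC_H(\calT[G]/G)$) as the universal Langlands kernel is itself a \emph{consequence} of Conjecture~\ref{langlands} applied to the trivial theory, not an independent input. So your argument, read literally, assumes a special case of what it is trying to prove. Moreover, the step where you pass from ``fiber over the trivial local system'' to ``the full category over $\LS_{G^\vee}(\calD^*)$'' by invoking the $G^\vee$-action on $\calA_R$ is not justified: knowing a fiber together with a weak $G^\vee$-action does not determine a category over $\LS_{G^\vee}(\calD^*)$, as the paper itself warns just before~(\ref{gauge-cat}). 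These are not fixable gaps---they reflect the fact that the statement is genuinely open.
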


Recall now formula (\ref{gw}):
$$
\calT^{\vee}=((\calT\times \calT[G])/ G)^*.
$$
In particular, we can apply it to $\calT$ being the trivial theory; in this case we get that the group $G^{\vee}$ should act on the theory
$(\calT[G]/G)^*$. Let $\calC_G=\calC_C((\calT[G]/G)^*)$. Then this category should have a strong action of $G^{\vee}(\calK)$. On the other hand, $\calC_G=\calC_H(\calT[G]/G)$, so in addition it should live over $\LS_G(\calD^*)$ (these two structures should commute in the obvious way). We expect that $\calC_G$ is the {\em universal Langlands category} for $G^{\vee}$, i.e.\ that for any other category $\calC$ with $G^{\vee}(\calK)$-action we have
$$
\bfL_{G^{\vee}}(\calC)=\calC\underset {G^{\vee}(\calK)}\otimes \calC_G.\footnote{Such a tensor product does make sense as long as we live in the world of dg-categories.}
$$
In particular, if $G=\GL(n)$ then we see that the universal Langlands category $\calC_{\GL(n)}$ is expected to be equivalent to $\QCoh(\Maps(\calD^*_{dR},\calY/G))$ where $\calY$ is given by (\ref{ygln}). Note that in this realization the fact that this category lives over $\LS_{\GL(n)}(\calD^*)$ is clear, but the action of $\GL(n,\calK)$ is absolutely not obvious: we don't know how to construct it.

Again, it must be noted that the notion of universal Langlands category is not precise since as was mentioned above the correct formulation of the local geometric Langlands conjecture involves a modification of the notion of category over $\LS_G(\calD^*)$. But at least we believe that the above description of the universal Langlands category is true as stated over the locus of irreducible local systems.
\subsection{Quantization}
Let us now discuss the categorical structures which give rise to the quantizations (and thus to Poisson structures) of the algebras $\CC[\calM_H]$ and $\CC[\calM_C]$. Let us first look at the
latter one. The space $\Maps(\calD^*,\calY)$ has a natural action of the multiplicative group $\GG_m$ (which acts on $\calD^*$ by loop rotation). Thus the category $\calC_C(\calY)$ admits a natural deformation:
the category $D$-$\on{mod}_{\GG_m}(\Maps(\calD^*,\calY))$ of $\GG_m$-equivariant $D$-modules. The object $\calF_C$ deforms naturally to an object of $D$-$\on{mod}_{\GG_m}(\Maps(\calD^*,\calY))$ and thus we can set
$$
\CC_{\hbar}(\calM_C)=\Ext^*_{\Dmod_{\GG_m}(\Maps(\calD^*,\calY))}(\calF_C,\calF_C).
$$
Here $\hbar$ as before is a generator of $H^*_{\GG_m}(\pt,\CC)$.

What about the quantization of $\calM_H$? As before we need to look for a one-parameter deformation of the pair $(\calC_H,\calF_H)$.
Here again the action of the multiplicative group $\GG_m$ on $\calD$ and on $\calD^*$ gives rise to an action of $\GG_m$ on the category
$\calC_H(\calY)$; we claim that this action is strong (this is related to the fact that we work with maps from $\calD^*_{dR}$ rather than with maps from $\calD^*$).
Thus it makes sense to consider the category of strongly equivariant objects in $\calC_H(\calY)$
(cf.\ again \cite[page~4]{Gai-act}). The Ext-algebra of (the natural  analog of) the object $\calF_H$ in this category is again a non-commutative algebra over $\CC[\hbar]$ which is  a quantization of $\CC[\calM_H]$.

Note that since in both cases we use the action of the multiplicative group on $\calD^*$, it follows that the deformed categories are no longer factorisation categories, so the corresponding Ext-algebras no longer have factorisation structure. This is why they have a chance to become non-commutative (at least the Remark after~(\ref{cat-higgs}) does not apply here).

\subsection{Holomorphic-topological twist}
We have learned the main ideas of this subsection from K.~Costello.
So far we discussed the two topological twists of a given theory completely independently of each other. However, in fact in physics both the $C$-twist and the $H$-twist appear as one-parametric families of equivalent twists. In addition, both families have {\em the same} limiting point, where the theory is no longer topological (it becomes {\em holomorphic-topological}, cf.~\cite{ACMV}; roughly speaking it means that, for example, for a 3-manifold $M$ the partition function $Z(M)$ is well-defined if one fixes some additional structure on $M$ which locally makes it look like a product of a complex curve $\Sigma$ and a 1-manifold $I$). The category of line operators in the holomorphic-topological theory is still well-defined. As a result we come to the following conclusion:

\medskip
\noindent
{\bf Conclusion:}
There should exists a factorisation category $\calC$ with an object $\calF$ and two $\ZZ$-gradings such that

(1) The two $\ZZ$-gradings yield the same $\ZZ_2$-grading.

(2) The pair $(\calC_C,\calF_C)$ is a deformation of the pair $(\calC,\calF)$. This deformation preserves the 1st grading on $\calC$.

(3) The pair $(\calC_H,\calF_H)$ is a deformation of the pair $(\calC,\calF)$. This deformation preserves the 2nd grading on $\calC$.

\medskip
Let us describe a suggestion for the category $\calC(\calY)$ and the object $\calF(\calY)$.
We would like to set
$$
\calC(\calY)=\on{QCoh}(T^*\Maps(\calD^*,\calY)).
$$
Here there are some technical problems: the stack $T^*\Maps(\calD^*,\calY)$ is very essentially infinite-dimensional, so studying quasi-coherent sheaves on it is more difficult than before.
Let us assume that it is possible though and let us discuss (1)-(3) in this case.
First, we need two gradings. The first grading is simply the homological grading on QCoh.
The second grading is the combination of the homological grading and the grading coming from $\CC^{\times}$-action on $T^*\Maps(\calD^*,\calY)$ (which dilates the cotangent fibers) multiplied by two (so the two grading manifestly yield the same $\ZZ_2$-grading).

Now the category of $D$-modules on $\Maps(\calD^*,\calY)$ is clearly a deformation of $\on{QCoh}(T^*\Maps(\calD^*,\calY))$. On the other hand, it is less clear how to deform the category $\on{QCoh}(T^*\Maps(\calD^*,\calY))$ to $\on{QCoh}(\Maps(\calD_{dR}^*,\calY))$. We plan to address these issues in a future publication.

\end{document}